%%%%%%%% ICML 2026 EXAMPLE LATEX SUBMISSION FILE %%%%%%%%%%%%%%%%%

\documentclass{article}

% My commands
\newcommand{\Pb}{\mathbb{P}}
\newcommand{\Nb}{\mathbb{N}}
\newcommand{\R}{\mathbb{R}}
\newcommand{\E}{\mathbb{E}}
\newcommand{\V}{\mathbb{V}}

\newcommand{\bigO}{\mathcal{O}}

\newcommand{\F}{\mathcal{F}}

\newcommand{\lp}{\left(}
\newcommand{\rp}{\right)}
\newcommand{\lc}{\left\{}
\newcommand{\rc}{\right\}}
\newcommand{\lb}{\left[}
\newcommand{\rb}{\right]}
\newcommand{\rba}{\right|}
\newcommand{\lba}{\left|}
\newcommand{\rdba}{\right\|}
\newcommand{\ldba}{\left\|}

 %\mathbbm{1}

% Recommended, but optional, packages for figures and better typesetting:
\usepackage{microtype}
\usepackage{graphicx}
\usepackage{subcaption}
\usepackage{booktabs} % for professional tables

% hyperref makes hyperlinks in the resulting PDF.
% If your build breaks (sometimes temporarily if a hyperlink spans a page)
% please comment out the following usepackage line and replace
% \usepackage{icml2026} with \usepackage[nohyperref]{icml2026} above.
\usepackage{hyperref}

% Attempt to make hyperref and algorithmic work together better:

% Use the following line for the initial blind version submitted for review:
% \usepackage{icml2026}

% For preprint, use
%\usepackage[preprint]{icml2026}

% If accepted, instead use the following line for the camera-ready submission:
\usepackage[accepted]{icml2026}

\usepackage{amsmath}
\usepackage{amssymb}
\usepackage{mathtools}
\usepackage{amsthm}

% if you use cleveref..
\usepackage[capitalize,noabbrev]{cleveref}

%%%%%%%%%%%%%%%%%%%%%%%%%%%%%%%%
% THEOREMS
%%%%%%%%%%%%%%%%%%%%%%%%%%%%%%%%
\theoremstyle{plain}
\newtheorem{theorem}{Theorem}[section]
\newtheorem{proposition}[theorem]{Proposition}
\newtheorem{lemma}[theorem]{Lemma}
\newtheorem{corollary}[theorem]{Corollary}
\theoremstyle{definition}

\newtheorem{assumption}[theorem]{Assumption}
\theoremstyle{remark}

% Todonotes is useful during development; simply uncomment the next line
%    and comment out the line below the next line to turn off comments
%\usepackage[disable,textsize=tiny]{todonotes}
\usepackage[textsize=tiny]{todonotes}

% The \icmltitle you define below is probably too long as a header.
% Therefore, a short form for the running title is supplied here:
\icmltitlerunning{Sharp Empirical Bernstein Inequalities for the Variance of Bounded Random Variables}

\begin{document}

\twocolumn[
  \icmltitle{Sharp Empirical Bernstein Inequalities for \\ the Variance of Bounded Random Variables}

  % It is OKAY to include author information, even for blind submissions: the
  % style file will automatically remove it for you unless you've provided
  % the [accepted] option to the icml2026 package.

  % List of affiliations: The first argument should be a (short) identifier you
  % will use later to specify author affiliations Academic affiliations
  % should list Department, University, City, Region, Country Industry
  % affiliations should list Company, City, Region, Country

  % You can specify symbols, otherwise they are numbered in order. Ideally, you
  % should not use this facility. Affiliations will be numbered in order of
  % appearance and this is the preferred way.
  \icmlsetsymbol{equal}{*}

  \begin{icmlauthorlist}
    \icmlauthor{Diego Martinez-Taboada}{yyy}
    \icmlauthor{Aaditya Ramdas}{yyy,comp}
    % \icmlauthor{Firstname3 Lastname3}{comp}
    % \icmlauthor{Firstname4 Lastname4}{sch}
    % \icmlauthor{Firstname5 Lastname5}{yyy}
    % \icmlauthor{Firstname6 Lastname6}{sch,yyy,comp}
    % \icmlauthor{Firstname7 Lastname7}{comp}
    % %\icmlauthor{}{sch}
    % \icmlauthor{Firstname8 Lastname8}{sch}
    % \icmlauthor{Firstname8 Lastname8}{yyy,comp}
    % %\icmlauthor{}{sch}
    % %\icmlauthor{}{sch}
  \end{icmlauthorlist}

  \icmlaffiliation{yyy}{Department of Statistics \& Data Science, Carnegie Mellon University, Pittsburgh, United States}
  \icmlaffiliation{comp}{Machine Learning Department, Carnegie Mellon University, Pittsburgh, United States}

  \icmlcorrespondingauthor{Diego Martinez-Taboada}{diegomar@andrew.cmu.edu}

  % You may provide any keywords that you find helpful for describing your
  % paper; these are used to populate the "keywords" metadata in the PDF but
  % will not be shown in the document
  \icmlkeywords{Machine Learning, ICML}

  \vskip 0.3in
]

% this must go after the closing bracket ] following \twocolumn[ ...

% This command actually creates the footnote in the first column listing the
% affiliations and the copyright notice. The command takes one argument, which
% is text to display at the start of the footnote. The \icmlEqualContribution
% command is standard text for equal contribution. Remove it (just {}) if you
% do not need this facility.

% Use ONE of the following lines. DO NOT remove the command.
% If you have no special notice, KEEP empty braces:
\printAffiliationsAndNotice{}  % no special notice (required even if empty)
% Or, if applicable, use the standard equal contribution text:
% \printAffiliationsAndNotice{\icmlEqualContribution}

\begin{abstract}
  We develop novel ``empirical Bernstein'' inequalities for the variance of bounded random variables. Our inequalities hold under constant conditional variance and mean, without further assumptions like independence or identical distribution of the random variables, making them suitable for sequential decision making contexts. The results are instantiated for both the batch setting (where the sample size is fixed) and the sequential setting (where the sample size is a stopping time). Our bounds are asymptotically ``sharp'': when the data are iid, our CI adapts optimally to both unknown mean $\mu$ and unknown $\mathbb{V}[(X-\mu)^2]$, meaning that the first order term of our CI \emph{exactly} matches that of the oracle Bernstein inequality which knows those quantities. We compare our results to a widely used (non-sharp) concentration inequality for the variance based on self-bounding random variables, showing both the theoretical gains and improved empirical performance of our approach. We finally extend our methods to work in any separable Hilbert space.
\end{abstract}

\section{Introduction} \label{section:introduction}

Providing finite-sample confidence intervals for the variance of a random variable is a fundamental problem in statistical inference, as variance quantifies the dispersion of data and directly influences uncertainty in decision-making. 
% Confidence intervals offer a range within which the true variance is likely to lie, allowing for better interpretation of variability in data and providing more robust statistical conclusions.
Exact confidence intervals are particularly important because approximate methods can lead to misleading inferences, especially when sample sizes are small, data distributions are skewed, or underlying assumptions are violated. 

In particular, exact confidence intervals for the \textit{variance} play a central role in modern data-driven inference. Applications include multi-armed bandits \citep{audibert2009exploration}, off-policy evaluation \citep{thomas2015high} and  risk assessment \citep{huang2022off}, average treatment effect inference \citep{howard2021time} and  estimation \citep{neopane2025optimistic}, sample compression \citep{maurer2009empirical}, racing algorithms and boosting \citep{mnih2008empirical}, PAC-Bayes procedures \citep{tolstikhin2013pac}, and efficient confidence intervals \citep{austern2022efficient}, among others. However, the inferential tools that are provided for the variance in all these previous works can be sharpened.

%empirical Bernstein inequalities for the mean \citep{audibert2009exploration, maurer2009empirical}, which give way to a variety of applications including adaptive statistical learning \citep{mnih2008empirical}, high-confidence policy evaluation in reinforcement learning \citep{thomas2015high}, off-policy risk assessment \citep{huang2022off}, and inference for the average treatment effect \citep{howard2021time}. But in all these works, the focus was the mean, offering limited inferential tools for the variance (although several such contributions could benefit from a refined analysis of the variance).

This contribution centers on the study of the variance of bounded random variables. Assume for the moment that we observe $X_1, \ldots, X_n$, which are independent and identically distributed to $X$, which is a random variable taking values on $[0, 1]$ with mean $\mu$ and variance $\mathbb{V}(X)$.
The primary goal of this work is to derive confidence intervals for $\V(X)$ that both well work in practice and are asymptotically equivalent to those derived from the oracle Bernstein inequality. More specifically, we seek fully empirical confidence intervals whose width's leading term matches that of the $[0,1]$-valued oracle Bernstein confidence interval for $\V(X)$. To recall, Bernstein's inequality for $\V(X)$ implies that $\mathbb{P}(\mathbb{V}(X) \in C_n) \geq 1 - \alpha$ with $C_n = [D_n - R_n, D_n + R_n]$, where $D_n$ is the center of the confidence interval and $R_n$ is the radius 
\begin{align*}
    R_n = \sqrt{\frac{2 \mathbb{V}[(X_i - \mu)^2] \log(2/\alpha)}{n}} + \frac{\log(2/\alpha)}{3n}.
\end{align*}

 The challenge in constructing the above interval in practice is that both $\mu$ and $2 \V \lb (X_i-\mu)^2\rb$ are unknown. 
 A fully empirical confidence interval is called \textit{sharp} if its width asymptotically matches that of the first term above, including constants. To elaborate, when estimating the mean, Bernstein inequalities \citep{bernstein_theory_1927, bennett1962probability} are widely known for leading to closed-form, tight confidence intervals. However, their practicality is limited, as they require knowing a bound on the variance of the random variables (that is better than the trivial bound implied by the  bounds on the random variables). For this reason, they establish a natural ``oracle'' benchmark for fully empirical confidence sets that only exploit knowledge on the bound of the random variables.

Furthermore, some of the aforementioned applications rely on confidence sequences, which are anytime-valid counterparts of confidence intervals. A $(1-\alpha)$-confidence interval $C_{\text{CI}}$ for a target parameter $\theta$ is a random set  such that $P (\theta \in C_{\text{CI}}) \geq 1-\alpha$, where $C_{\text{CI}}$ is built after having observed a fixed number of observations. In contrast, $(C_{t})_{t\geq1}$ is a $(1-\alpha)$-confidence sequence if  $P (\forall t\geq 1: \theta \in C_{t}) \geq 1-\alpha$, with $t$ representing the number of observations collected sequentially. Confidence sequences are of key importance in online settings, where data is observed sequentially and probabilistic guarantees that hold at stopping times are often desired: confidence sequences allow for sequential procedures that are continuously monitored and adaptively adjusted. For instance, the optimal adaptive Neyman allocation in causal inference  \citep{neopane2025optimistic} is based on confidence sequences for the variance. %kato2020efficient, cook2024semiparametric, neopanelogarithmic provides a high-probability guarantee that the parameter $\theta$ is contained in the sequence at all time points with probability $1-\alpha$, i.e.,
%for randomized control trials in the context of

Variance confidence sequences are particularly valuable when uncertainty quantification must be performed sequentially and in a data-dependent manner. For instance, the optimal adaptive Neyman allocation in causal inference \citep{neopane2025optimistic} is based on confidence sequences for the variance. Similarly, in online learning and bandit problems, high-probability regret guarantees are often derived via concentration arguments, and sharp variance bounds enable tighter such conversions \citep{mnih2008empirical, audibert2009exploration}. Furthermore, in risk-sensitive decision-making, such as safe reinforcement learning or clinical trials \citep{kazerouni2017conservative, howard2021time}, sequential variance bounds allow practitioners to monitor and constrain risk in real time without relying on fixed horizons. More broadly, empirical Bernstein-type bounds play an important role in adaptive stopping rules, best-arm identification, and Monte Carlo estimation, where tighter variance estimates directly translate into improved sample efficiency. 

%In these contexts, confidence sequences for the variance lead to sharper inference and more robust decision-making in sequential and data-dependent environments. %provide a flexible and nonasymptotic tool for quantifying second-order uncertainty, thereby enabling
%controlling variability is as important as controlling the mean

In many such online settings, the assumption that data points are independent and identically distributed (iid) is often too strong and unrealistic due to the dynamic and evolving nature of data streams. Unlike traditional offline analyses where data can be assumed to come from a fixed distribution, online environments involve sequentially arriving data that may exhibit temporal dependencies. Thus, we seek to develop concentration inequalities that only require the following assumption to hold (where $\E_t$ and $\V_t$ denote conditional expectations and variances, respectively; these definitions are later formalized in Section~\ref{section:background}).
\begin{assumption} \label{ass:main_assumption}
    The stream of random variables $X_1, X_2, \ldots$ is such that
    \begin{align*}
        X_t \in [0,1], \quad \E_{t-1} X_t = \mu, \quad \V_{t-1} X_t = \sigma^2.
    \end{align*}
\end{assumption}
%Note that all (rescaled) iid bounded random variables attain Assumption~\ref{ass:main_assumption}. More generally,  Assumption~\ref{ass:main_assumption} is rather weak, avoiding assuming independence of the random variables. Any bounded random variable can be rescaled to belong to $[0,1]$, and so the first of the conditions can be assumed without loss of generality in the bounded setting. The conditional constant mean and variance are arguably the least we may assume if we wish to estimate ``the variance''. 

Note that any bounded random variable can be rescaled to belong to $[0,1]$, and so the first of the conditions can be assumed without loss of generality in the bounded setting. Since boundedness is a prerequisite for existing empirical Bernstein inequalities concerning the mean, it is unavoidable here too. Other than boundedness,  Assumption~\ref{ass:main_assumption} remains substantially weak, replacing the traditional i.i.d. assumption with a broader martingale dependence structure. Furthermore, the conditional constant mean and variance are arguably the least we may assume if we wish to estimate "the variance". Since all bounded i.i.d. sequences can be rescaled to satisfy Assumption~\ref{ass:main_assumption}, our framework constitutes a strictly more general approach than the standard bounded i.i.d. assumptions prevalent in the literature. In particular, Assumption~\ref{ass:main_assumption} is attained in all aforementioned applications, with some of them  requiring i.i.d. data \citep{maurer2009empirical, austern2022efficient}, and others requiring only martingale dependence \citep{howard2021time, neopane2025optimistic}.

We study the problem of providing confidence intervals and sequences for the variance of bounded random variables under Assumption~\ref{ass:main_assumption}. Our goal is to derive a fully data-dependent, nonasymptotically valid confidence interval for $\sigma^2$, without knowing either the mean or the fourth moment, while also being asymptotically as sharp as the oracle Bernstein bound that knows those quantities. That is, we want nonasymptotic validity and asymptotic sharpness for an explicit fully-data dependent confidence interval.
% We provide a novel empirical Bernstein bound by modifying and combining (empirical) Bernstein bounds for the mean. 
Our contributions are three-fold:
\begin{itemize}
    \item We provide novel confidence sequences for the variance (Corollary~\ref{corollary:upper_bound} and Corollary~\ref{corollary:lower_bound}), that are derived from novel supermartingale constructions (Theorem~\ref{theorem:main_supermartingale_theorem} and Corollary~\ref{corollary:main_corollary}). We instantiate the results for the sequential setting in Section~\ref{section:upper_bound} and Section~\ref{section:lower_bound}, and for the batch setting (where the confidence sequence reduces to a confidence interval) in Section~\ref{section:ci}. Confidence sequences for the standard deviation (std) $\sigma$ can also be immediately derived by taking the square root of the confidence sequences for the variance.
    \item Theoretically, we prove the sharpness of our inequalities by showing that the first order term of the novel confidence interval exactly matches that of the oracle Bernstein inequality (Corollary~\ref{corollary:sharpness}).
    \item Empirically, we illustrate how our proposed inequalities substantially outperform those of \citet[Theorem 10]{maurer2009empirical} in Section~\ref{section:experiments}, which constitute the existing sharpest inequalities for the standard deviation, to the best of our knowledge. We further illustrate how our confidence sequences improve the adaptive Neyman allocation procedure from \citet{neopane2025optimistic}, which requires anytime-valid inference for the variance of the potential outcomes, but used inferior ones to ours.
\end{itemize}

%\paragraph{Paper outline.} We present related work in Section~\ref{section:related_work}, followed by preliminaries in Section~\ref{section:background}. Section~\ref{section:main_results} exhibits the main results of this contribution, namely an empirical Bernstein inequality for the variance. Section~\ref{section:experiments} displays experimental results, showing how our proposed inequalities outperform existing alternatives. We conclude with some remarks in Section~\ref{section:conclusion}.
\section{Related Work} \label{section:related_work}

\textbf{Current concentration inequalities for the variance.} Upper and lower inequalities for the variance were presented in \citet[Theorem 10]{maurer2009empirical}. They are based on the concentration of self-bounding random variables \citep{maurer2006concentration}. Another concentration inequality for the variance can be found in the proof of~\citet[Theorem 1]{audibert2009exploration}, which decouples the analysis into those of the mean and second centered moment, in a similar spirit to our contribution. However, these inequalities rely on conservatively upper bounding the \emph{variance of the empirical variance}, thus being loose (this is also the case  for \citet[Theorem 12]{voravcek2025star}).  In contrast, our inequalities empirically estimate the variance of the empirical variance, resulting in tighter confidence sets.  We defer an extended comparison of these inequalities to Section~\ref{section:comparison}. Less closely related to our work, other inequalities rely on the Kolmogorov-Smirnov distance between the empirical distribution and a reference distribution. % Within this line of work, \citet{austern2022efficient} had originally adapted the finite-sample results from \citet{romano2000finite} to obtain variance bounds that are often significantly tighter than those from \citet{maurer2009empirical}; however, these were outperformed and replaced by the inequalities introduced in this contribution. 

% Less closely related to our work, other inequalities rely on the Kolmogorov-Smirnov (KS) distance between the empirical distribution and a second distribution. For instance,~\citet[Lemma 2]{austern2022efficient} leveraged the concentration inequalities from \citet{romano2000finite}, which build on the KS distance between the empirical distribution and the actual distribution, in order to derive inequalities for the variance.
% (a similar approach could probably be derived by means of Berry-Esseen bounds, i.e. understanding the KS distance of the normalized empirical distribution and the standard normal distribution). \citet[Appendix D]{austern2022efficient} elucidated the use of these inequalities for variance estimation in the case of independent and identically distributed (iid) random variables. However, these methods strongly rely on independence assumptions and are also tailored to the batch setting, where the sample size is fixed in advance.\footnote{In particular, the ``empirical Berry-Esseen'' inequalities from \citet[Appendix D]{austern2022efficient} combine the inequalities from \citet{romano2000finite} and \citet{maurer2009empirical}. Our contribution improves on the latter, hence opening the door for also improving the results in \citet{austern2022efficient}.}
% making them unsuitable for the online setting (where iid is generally too strong of an assumption, and practitioners seek anytime valid guarantees that allow them to conduct experiments sequentially).

\textbf{Empirical Bernstein inequalities for the mean.} Based on combining Bennett's inequality and upper concentration inequalities for the variance, 
\citet[Theorem 11]{maurer2009empirical} proposed a well known empirical Bernstein inequality for the mean, improving a similar inequality presented in \citet[Theorem 1]{audibert2009exploration}. These inequalities are not asymptotically sharp as presented (in that the first order limiting width does not match that of the oracle Bernstein inequality, including constants). However, they can be amended to recover sharpness if the probability split of the union bound is carefully designed (as pointed out recently in \citet[Section B.2]{wang2024sharp}). Nevertheless, their bounds were empirically significantly looser \citep[Figure 3]{waudby2024estimating} than those presented in \citet[Theorem 4]{howard2021time}  and \citet[Theorem 2]{waudby2024estimating}, which are also sharp. Related contributions include \citet[Theorem 2]{mnih2008empirical}, \citet[Corollary 4]{jang2023tighter}, \citet[Theorem 3]{orabona2023tight}, and \citet[Corollary 1]{martinez2024empirical}.

% \paragraph{Betting-based concentration.} A profound relationship exists between concentration inequalities and the regret guarantees of online learning algorithms with linear losses, as established by \cite{rakhlin2017equivalence}. Consequently, the study of concentration phenomena through the lens of gambling-based frameworks has become an active and increasingly influential research direction \citep{jun2019parameter, waudby2024estimating, voravcek2025star}. Empirical Bernstein bounds have also been developed under this line of research, see e.g. \citet[Corollary 4]{jang2023tighter} or \citet[Theorem 3]{orabona2023tight}. Nonetheless, the resulting concentration inequalities are often not available in closed form, and inverting these bounds tends to be analytically opaque and computationally challenging. For this reason, we focus on non-betting closed-form inequalities in this work. However, we highlight that the recent betting-based contribution \citet{voravcek2025star} proposed a novel concentration inequality for the second moment centered at any arbitrary point; we provide additional commentary in Appendix~\ref{section:comparison}.

\textbf{Time-uniform Chernoff inequalities.} Our work falls under the time-uniform Chernoff inequalities umbrella from \citet{howard2020time,howard2021time,waudby2024estimating}. A key proof technique of this line of work is the derivation of sophisticated nonnegative supermartingales, followed by an application of Ville's inequality \citep{ville1939etude}, an anytime-valid version of Markov's inequality.

\section{Background} \label{section:background}

Let us start by presenting the concepts of \textit{filtration} and \textit{supermartingale}, which will be heavily exploited in this work to go beyond the iid setting. Consider a filtered measurable space $(\Omega,\F)$, where the filtration $\F = (\F_t)_{t \geq 0}$ is a sequence of $\sigma$-algebras such that $\F_t \subseteq \F_{t+1}$, $t \geq 0$. The canonical filtration $\F_t = \sigma(X_1, \ldots, X_t)$, with $\F_0$ being trivial, is considered throughout. A stochastic process $M \equiv (M_t)_{t \geq 0}$ is a sequence of random variables that are adapted to $(\F_t)_{t \geq 0}$, i.e., $M_t$ is $\F_{t}$-measurable for all $t$.  $M$ is called \emph{predictable} if $M_t$ is $\F_{t-1}$-measurable for all $t$. An integrable stochastic process $M$ is a supermartingale if $\E[M_{t+1}| \F_t] \leq M_t$ for all $t$.  We use  $\E_{t}[\cdot]$ and $\V_{t}[\cdot]$ in short for $\E[\cdot | \F_t]$ and $\V[\cdot | \F_t]$, respectively. Inequalities between random variables are always interpreted to hold almost surely. 

As exhibited in later sections, our concentration inequalities will be derived as Chernoff inequalities. In contrast to more classical inequalities, our results come with anytime validity (that is, they hold at any stopping time), derived using the following anytime-valid version of Markov's inequality. 
\begin{theorem} [Ville's inequality] \label{theorem:villesinequality}
    For any nonnegative supermartingale $(M_t)_{t \geq 0}$ and $x > 0$,
    \begin{align*}
        \Pb \lp \exists t \geq 0: M_t \geq x\rp \leq \frac{\E M_0}{x}.
    \end{align*}
\end{theorem}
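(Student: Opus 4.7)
The plan is to reduce the statement about the existence of a threshold crossing to a statement about a hitting time, and then apply optional stopping to bounded truncations of that time. Concretely, I introduce
$$\tau \defined \inf\lc t \geq 0 : M_t \geq x \rc,$$
with the convention $\inf \emptyset = \infty$, so that the event in the theorem equals $\lc \tau < \infty \rc$. Since $(M_t)_{t \geq 0}$ is adapted, $\tau$ is an $\F$-stopping time, and for each finite $T \in \Nb$ the truncation $\tau \wedge T$ is a bounded stopping time.

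The core step is to establish the optional stopping inequality $\E M_{\tau \wedge T} \leq \E M_0$ for every $T$. This follows from the telescoping identity $M_{\tau \wedge T} = M_0 + \sum_{s=1}^{T} \mathbf{1}\lc \tau \geq s \rc \lp M_s - M_{s-1}\rp$, together with the fact that $\mathbf{1}\lc \tau \geq s\rc = 1 - \mathbf{1}\lc \tau \leq s-1\rc$ is $\F_{s-1}$-measurable; conditioning each summand on $\F_{s-1}$ and invoking the supermartingale property $\E_{s-1}[M_s - M_{s-1}] \leq 0$ shows that every term has nonpositive expectation.

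Given the bound $\E M_{\tau \wedge T} \leq \E M_0$, I split the expectation according to whether $\tau \leq T$ or $\tau > T$. On $\lc \tau \leq T\rc$ one has $M_{\tau \wedge T} = M_\tau \geq x$ by the definition of $\tau$, while on the complement $M_{\tau \wedge T} = M_T \geq 0$ by nonnegativity of $M$. Therefore
$$\E M_0 \geq \E M_{\tau \wedge T} \geq x \, \Pb\lp \tau \leq T\rp,$$
and hence $\Pb(\tau \leq T) \leq \E M_0 / x$. Letting $T \to \infty$ and applying continuity of probability from below to the increasing sequence of events $\lc \tau \leq T\rc \uparrow \lc \tau < \infty\rc$ yields $\Pb(\tau < \infty) \leq \E M_0 / x$, which is exactly the claim.

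The main obstacle is really just bookkeeping rather than mathematical depth: one must ensure optional stopping applies without integrability complications, but nonnegativity of $M$ makes this automatic, since bounded stopping times always yield an integrable stopped variable when the increments are controlled in expectation. Nonnegativity is also what makes the final splitting step work, as it allows us to discard the contribution of $\lc \tau > T\rc$ without worrying about mass escaping toward $-\infty$. Everything else is routine manipulation.
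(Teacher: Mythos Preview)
Your proof is correct and follows the standard optional-stopping argument for Ville's inequality. Note, however, that the paper does not supply its own proof of this statement: it is presented in the background section as a classical result attributed to \citet{ville1939etude} and is invoked as a black box throughout the appendix. There is therefore no paper proof to compare against; your argument is precisely the usual one and would serve as a complete justification if one were required.
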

Powerful nonnegative supermartingale constructions are usually at the heart of anytime valid concentration inequalities. For example, the following sharp empirical Bernstein inequality from \citet{howard2021time} and \citet{waudby2024estimating} is derived from a nonnegative supermartingale.

\begin{theorem}[Empirical Bernstein inequality] \label{theorem:empirical_bernstein_ian}
Let $X_1, X_2, \ldots$ be a stream of random variables such that, for all $t \geq 1$, it holds that $X_t \in [0, 1]$ and $\E_{t-1}X_t = \mu$. Let $\psi_E(\lambda) = -\log\lambda - \lambda$. For any $[0,1)$-valued predictable sequence $(\lambda_i)_{i \geq 1}$ such that $\lambda_1 > 0$, it holds that
    \begin{align*}
    \lp \frac{\sum_{i = 1}^n \lambda_i X_i}{\sum_{i = 1}^n \lambda_i} \pm  \frac{\log\lp\frac{2}{\delta}\rp + \sum_{i = 1}^n \psi_E(\lambda_i)  \lp X_i - \hat\mu_{i-1} \rp^2}{\sum_{i = 1}^n \lambda_i}\rp
\end{align*}
is a $1-\delta$ confidence sequence for $\mu$.
\end{theorem}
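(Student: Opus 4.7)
The natural strategy is to build, for each choice of sign, a nonnegative supermartingale whose logarithm recovers the two sides of the claimed confidence sequence, and then apply Ville's inequality (Theorem~\ref{theorem:villesinequality}) together with a union bound. Concretely, I would define the processes
\begin{align*}
    M_t^{\pm} \;\defined\; \exp\!\lp \pm \sum_{i=1}^{t} \lambda_i (X_i - \mu) \;-\; \sum_{i=1}^{t} \pe(\lambda_i)\, (X_i - \hat\mu_{i-1})^2 \rp,
\end{align*}
with $M_0^{\pm} = 1$ and $\hat\mu_{i-1}$ any $\F_{i-1}$-measurable estimator of $\mu$. If I can show that $M_t^{+}$ and $M_t^{-}$ are both nonnegative supermartingales with respect to $\F = (\F_t)_{t\geq 0}$, then Ville's inequality applied at level $\delta/2$ to each gives, with probability at least $1-\delta$ and simultaneously for every $t$,
\begin{align*}
    \lba \sum_{i=1}^{t} \lambda_i (X_i - \mu) \rba \;\leq\; \log(2/\delta) + \sum_{i=1}^{t} \pe(\lambda_i)\,(X_i - \hat\mu_{i-1})^2,
\end{align*}
and dividing by $\sum_{i=1}^t \lambda_i$ (positive since $\lambda_1>0$ and the $\lambda_i$ are nonnegative) reproduces exactly the stated interval.

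The work therefore reduces to a single one-step conditional inequality: I need to show, for each fixed $t$,
\begin{align*}
    \E_{t-1}\!\lb \exp\!\lp \pm \lambda_t (X_t - \mu) - \pe(\lambda_t)\,(X_t - \hat\mu_{t-1})^2 \rp \rb \;\leq\; 1.
\end{align*}
I would prove this by invoking a Fan--Grama--Liu style pointwise bound, namely that for $\lambda \in [0,1)$ and $y \geq -1$, one has $e^{\lambda y - \pe(\lambda) y^2} \leq 1 + \lambda y$; this bound applies with $y = \pm(X_t - \mu)$ since $X_t \in [0,1]$ and $\mu \in [0,1]$. This immediately yields the supermartingale property \emph{with $\hat\mu_{i-1}$ replaced by $\mu$}, because the conditional expectation of the linear term $\pm\lambda_t(X_t - \mu)$ is zero. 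The final move is to transfer the quadratic term from $(X_t - \mu)^2$ to the empirical quantity $(X_t - \hat\mu_{t-1})^2$: since $\hat\mu_{t-1}$ is $\F_{t-1}$-measurable and $X_t \in [0,1]$, standard algebra decomposes $(X_t-\hat\mu_{t-1})^2$ into $(X_t-\mu)^2$ plus a predictable shift and a mean-zero cross term, which can be absorbed by choosing the correct form of the construction (for instance, by inserting a predictable correction $e^{\lambda_t(\hat\mu_{t-1}-\mu)}$-type factor, or by using the pointwise variant of the bound above evaluated at $y = X_t - \hat\mu_{t-1}$ after centering).

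The main obstacle is precisely this last transfer from $(X_t-\mu)^2$ to $(X_t-\hat\mu_{t-1})^2$ while preserving the supermartingale property; the rest of the argument is a routine Ville-plus-union-bound exercise. Once the one-step inequality is established, nonnegativity of $M_t^\pm$ is immediate and the telescoping $M_t^\pm = M_{t-1}^\pm \cdot \exp(\pm\lambda_t(X_t-\mu) - \pe(\lambda_t)(X_t-\hat\mu_{t-1})^2)$ together with the tower property upgrades the one-step bound to the supermartingale inequality $\E_{t-1}[M_t^\pm] \leq M_{t-1}^\pm$, completing the proof.
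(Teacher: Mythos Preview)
Your strategy is correct and matches the paper's approach (the theorem is quoted as background, but the proof of Theorem~\ref{theorem:main_supermartingale_theorem} in Appendix~\ref{proof:main_supermartingale_theorem} is explicitly stated to be analogous to it). One sharpening of your last step: do not apply the Fan--Grama--Liu bound at $y=\pm(X_t-\mu)$ and then attempt to ``transfer'' the quadratic to $(X_t-\hat\mu_{t-1})^2$; that replacement goes the wrong way in general. Instead, combine your two proposed fixes into a single move: write $X_t-\mu=(X_t-\hat\mu_{t-1})+(\hat\mu_{t-1}-\mu)$, pull out the predictable factor $\exp\{\lambda_t(\hat\mu_{t-1}-\mu)\}$, apply the Fan bound directly with $\xi=X_t-\hat\mu_{t-1}\in[-1,1]$ to obtain $\E_{t-1}[1+\lambda_t\xi]=1+\lambda_t(\mu-\hat\mu_{t-1})$, and close with $e^{d}(1-d)\le 1$ where $d=\lambda_t(\hat\mu_{t-1}-\mu)$. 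The minus sign is handled symmetrically, and the rest of your Ville-plus-union-bound wrap-up is exactly right.
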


 We will modify Theorem~\ref{theorem:empirical_bernstein_ian}  in later sections in order to derive our results. The sequence $(\lambda_i)_{i \geq 1}$ is referred to as `predictable plug-ins'. They play the role of the parameter $\lambda$ that naturally appears in all the Chernoff inequality derivations; nevertheless, instead of they being equal for each $i$ and theoretically optimized, they are empirically and sequentially chosen. The choice of the predictable plug-ins is key in the performance of the inequalities, and will be discussed throughout our work.  Besides making use of predictable plug-ins in empirical Bernstein-type supermartingales, we will also exploit them in the following anytime valid version of Bennett's inequality.
\begin{theorem} [Anytime valid Bennett's inequality] \label{theorem:bennett_anytimevalid}
    Let $X_1, X_2, \ldots$ be a stream of random variables such that, for all $t \geq 1$, it holds that $X_t \in [0, 1]$, $\E_{t-1}X_t = \mu$, and $\V_{t-1} X_t = \sigma^2$. Let $\psi_P(\lambda) = \exp(\lambda) - \lambda - 1$. For any $\R_+$-valued predictable sequence $(\tilde\lambda_i)_{i \geq 1}$, it holds that
     \begin{align*}
         \lp \frac{\sum_{i \leq t} \tilde \lambda_i X_i}{\sum_{i \leq t} \tilde \lambda_i} \pm \frac{\log(2/\delta) + \sigma^2\sum_{i \leq t} \psi_P(\tilde \lambda_i)}{\sum_{i \leq t} \tilde \lambda_i} \rp
     \end{align*}
     is a $1-\delta$ confidence sequence for $\mu$.
\end{theorem}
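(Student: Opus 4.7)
The plan is to mimic the derivation of Theorem~\ref{theorem:empirical_bernstein_ian}, swapping the empirical Bernstein cumulant for the (non-empirical) Bennett cumulant $\psi_P$: I would construct one nonnegative supermartingale per tail, apply Ville's inequality (Theorem~\ref{theorem:villesinequality}) at threshold $2/\delta$ to each, union bound, and rearrange to isolate $\mu$. Concretely, for the upper tail I would set
\begin{align*}
    M_t^{+} \defined \exp\lp \sum_{i \leq t} \tilde\lambda_i(X_i - \mu) - \sigma^2 \sum_{i \leq t}\psi_P(\tilde\lambda_i)\rp, \quad M_0^{+} \defined 1,
\end{align*}
and define the lower-tail analogue $M_t^{-}$ by replacing $\tilde\lambda_i(X_i - \mu)$ with $-\tilde\lambda_i(X_i - \mu)$ inside the exponent.

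The core step is to verify the supermartingale property using a conditional version of the classical Bennett moment-generating-function bound: if $Y$ is centered, has variance $\sigma^2$, and satisfies $Y \leq 1$, then $\E\lb \exp(\lambda Y)\rb \leq \exp(\sigma^2 \psi_P(\lambda))$ for all $\lambda \geq 0$. Because $X_t, \mu \in [0,1]$, both $Y_t \defined X_t - \mu$ and $-Y_t = \mu - X_t$ are conditionally centered, have conditional variance $\sigma^2$, and are upper bounded by $1$. Since $\tilde\lambda_t \geq 0$ is $\F_{t-1}$-measurable, applying the Bennett bound conditionally gives
\begin{align*}
    \E_{t-1}\lb \exp(\pm \tilde\lambda_t(X_t - \mu))\rb \leq \exp\lp \sigma^2 \psi_P(\tilde\lambda_t)\rp,
\end{align*}
from which $\E_{t-1}[M_t^{\pm} / M_{t-1}^{\pm}] \leq 1$ follows immediately, and $M_0^{\pm} = 1$ gives integrability and the correct initial value for Ville.

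Ville applied to each of $M_t^{+}$ and $M_t^{-}$ at $x = 2/\delta$ and a union bound then imply that, with probability at least $1 - \delta$, simultaneously for all $t \geq 1$,
\begin{align*}
    \lba \sum_{i \leq t}\tilde\lambda_i(X_i - \mu)\rba \leq \log(2/\delta) + \sigma^2 \sum_{i \leq t}\psi_P(\tilde\lambda_i),
\end{align*}
and dividing by the strictly positive quantity $\sum_{i \leq t}\tilde\lambda_i$ and solving for $\mu$ yields the announced confidence sequence. The only non-routine point I anticipate is the symmetry of the cumulant bound: Bennett's inequality is inherently asymmetric, and a naive application to the reverse tail would produce $\psi_P(-\tilde\lambda_t)$ rather than $\psi_P(\tilde\lambda_t)$; it is specifically the two-sided boundedness of $X_t$ inside $[0,1]$ that makes $\mu - X_t \leq 1$ as well, allowing the \emph{same} $\psi_P(\tilde\lambda_t)$ to control both tails and producing the clean symmetric radius in the statement.
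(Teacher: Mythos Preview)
Your proposal is correct and matches the paper's own proof essentially step for step: construct the two exponential supermartingales $M_t^{\pm}$, verify the supermartingale property via the Bennett MGF bound $\E_{t-1}[\exp(\pm\tilde\lambda_t(X_t-\mu))]\leq\exp(\sigma^2\psi_P(\tilde\lambda_t))$, apply Ville at $2/\delta$ to each, and union bound. The only cosmetic difference is that the paper writes out the Taylor expansion explicitly (using $|X_t-\mu|\leq 1$ to dominate all moments by $\sigma^2$) rather than citing the Bennett cumulant bound as a black box, and your explicit remark that the two-sided boundedness $X_t\in[0,1]$ is what makes $\mu-X_t\leq 1$ and hence yields the symmetric $\psi_P(\tilde\lambda_t)$ on both tails is exactly the content of the paper's closing line ``arguing analogously replacing $X_i-\mu$ for $\mu-X_i$.''
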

While this result is technically novel (and so we present a proof in Appendix~\ref{proof:bennett_anytimevalid}), it can be derived using the techniques in~\cite{howard2020time,waudby2024estimating}. It would generally lack any practical use, given that $\sigma$ is typically unknown. Nonetheless, we will invoke it in combination with an empirical Bernstein inequality for $\sigma^2$, thus making it actionable.

\section{Main Results} \label{section:main_results}
We are now ready to derive novel confidence intervals and sequences for the variance of bounded random variables under Assumption~\ref{ass:main_assumption}. Two natural methodological strategies arise. The first constructs confidence intervals by applying concentration inequalities for the variance to an empirical variance estimator, itself centered around an empirical estimator for the mean. For lower confidence intervals, the concentration guarantee cannot be invoked directly unless we also account for uncertainty in the empirical mean. This is the approach adopted in this work. Motivated by the strong empirical performance of the empirical Bernstein inequalities for the mean of bounded data, we develop another of these inequalities to control the empirical variance. Remarkably, for lower confidence intervals we demonstrate that empirical concentration inequalities for the mean are unnecessary: we can construct confidence intervals for the mean that scale linearly with the the square root of the true variance (without needing to estimate it) by solving a quadratic equation. Further details follow in the subsequent exposition. 
 
The second strategy is to express the variance as $\sigma^2 = E_{t-1} X_t^2 - \mu^2$, where by Assumption~\ref{ass:main_assumption} the conditional expectation remains constant across time. Moreover, since $X_t \in [0,1]$, the squared observations also lie in the unit interval, allowing the application of concentration inequalities for bounded variables to both the mean and the second moment; these bounds can then be combined via the union bound. However, as demonstrated in Appendix~\ref{section:naive_approach_two_eb}, this strategy yields inferior theoretical and empirical performance relative to the first approach. Intuitively, the performance gap stems from $\mathbb{V} (X-\mu)^2 \leq \mathbb{V} X^2$ (our method achieves the smaller variance).

% We devote this section to presenting novel empirical Bernstein bounds for the variance under Assumption~\ref{ass:main_assumption}. 
The section is organized as follows. In Section~\ref{section:supermartingale_construction}, we present the theoretical foundation of all the inequalities derived thereafter, namely a novel nonnegative supermartingale construction and its corollary.  Section~\ref{section:upper_bound} and Section~\ref{section:lower_bound} make use of such theoretical tools to derive upper and lower confidence sequences, respectively. In particular, the latter (Section~\ref{section:lower_bound}) makes essential use of an auxiliary Bennett-type concentration inequality for the estimator of the mean. Section~\ref{section:ci} instantiates such confidence sequences in the (more classical) batch setting, where they reduce to confidence intervals. We defer an extension of the results to Hilbert spaces to Appendix~\ref{section:extension_hs}.  % Lastly, Section~\ref{section:hs} extends these results to Hilbert spaces. %, with the latter making essential use of an auxiliary Bennett-type concentration inequality for the empirical mean

\subsection{A Nonnegative Supermartingale Construction} \label{section:supermartingale_construction}

We begin by introducing two nonnegative supermartingale constructions that serve as the theoretical foundation for the inequalities derived in this work; these nonnegative supermartingales will lead to concentration bounds when in conjunction with Ville's inequality.  Its proof may be found in Appendix~\ref{proof:main_supermartingale_theorem}.
\begin{theorem} \label{theorem:main_supermartingale_theorem}
    Let Assumption~\ref{ass:main_assumption} hold. For a $[0, 1]$-valued predictable sequence $(\widehat\mu_i)_{i \geq 1}$, denote
    \begin{align*}
        \tilde\sigma_i^2 = \sigma^2 + (\widehat\mu_i - \mu)^2.
    \end{align*}
    For any $[0, 1]$-valued predictable sequence  $(\widehat\sigma_i)_{i \geq 1}$ and any $[0, 1)$-valued predictable sequence  $(\lambda_i)_{i \geq 1}$, the processes $(S_t^+)_{t\geq0}$ and $(S_t^-)_{t\geq0}$, with $S_0^+ := 1$, and $S_0^- := 1$, and
    \begin{align*}
        S_t^{\pm} := \exp \Bigg\{ &\sum_{i_\leq t} \pm \lambda_i \lb  (X_i - \widehat\mu_i)^2 - \tilde\sigma_i^2  \rb
        \\ & - \psi_E(\lambda_i) \lb (X_i - \widehat\mu_i)^2 - \widehat \sigma_i^2 \rb^2 \Bigg\}, \quad t\geq1,
        %\\S_t^{-} &:= \exp \lc \sum_{i_\leq t} \lambda_i \lb \tilde\sigma_i^2  - (X_i - \widehat\mu_i)^2  \rb - \psi_E(\lambda_i) \lb (X_i - \widehat\mu_i)^2 - \widehat \sigma_i^2 \rb^2 \rc, \quad t\geq1,
    \end{align*}
    % \exp \lc \sum_{i_\leq t} \lambda_i \lb \pm (X_i - \widehat\mu_i)^2 \mp \tilde\sigma_i^2  \rb - \psi_E(\lambda_i) \lb (X_i - \widehat\mu_i)^2 - \widehat \sigma_i^2 \rb^2 \rc, \quad t\geq1,
    are nonnegative supermartingales.
\end{theorem}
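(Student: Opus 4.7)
The plan is to reduce both claims to the sharp empirical Bernstein supermartingale underlying Theorem~\ref{theorem:empirical_bernstein_ian}, by applying that construction to the derived process $Y_i := (X_i - \widehat\mu_i)^2$ in place of $X_i$. Two preliminary observations make this reduction clean. First, since both $X_i$ and $\widehat\mu_i$ lie in $[0,1]$, we have $Y_i \in [0,1]$. Second, because $\widehat\mu_i$ is $\F_{i-1}$-measurable, Assumption~\ref{ass:main_assumption} gives
\begin{equation*}
\E_{i-1}[Y_i] \;=\; \V_{i-1}[X_i] + (\widehat\mu_i - \mu)^2 \;=\; \sigma^2 + (\widehat\mu_i - \mu)^2 \;=\; \tilde\sigma_i^2.
\end{equation*}
Consequently the exponent of $S_t^+$ is exactly $\sum_{i \leq t}\bigl[\lambda_i(Y_i - \E_{i-1}[Y_i]) - \psi_E(\lambda_i)(Y_i - \widehat\sigma_i^2)^2\bigr]$: the empirical Bernstein log-supermartingale for a $[0,1]$-valued process with known conditional mean $\tilde\sigma_i^2$ and predictable variance proxy $\widehat\sigma_i^2$.

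For $S_t^+$, I would show that each multiplicative factor has $\F_{i-1}$-conditional expectation at most one. The key elementary ingredient is the inequality $\exp(\lambda x) \leq 1 + \lambda x + \psi_E(\lambda) x^2$, valid for $\lambda \in [0,1)$ and $x \leq 1$. Applying it with $x = Y_i - \widehat\sigma_i^2 \in [-1,1]$ yields
\begin{equation*}
\exp\bigl\{\lambda_i(Y_i - \widehat\sigma_i^2) - \psi_E(\lambda_i)(Y_i - \widehat\sigma_i^2)^2\bigr\} \;\leq\; 1 + \lambda_i(Y_i - \widehat\sigma_i^2).
\end{equation*}
Splitting $\lambda_i(Y_i - \tilde\sigma_i^2) = \lambda_i(Y_i - \widehat\sigma_i^2) + \lambda_i(\widehat\sigma_i^2 - \tilde\sigma_i^2)$, factoring out the $\F_{i-1}$-measurable exponential $\exp\{\lambda_i(\widehat\sigma_i^2 - \tilde\sigma_i^2)\}$, taking $\E_{i-1}$, and finally invoking $1 + u \leq e^u$ with $u = \lambda_i(\tilde\sigma_i^2 - \widehat\sigma_i^2)$, the two corrective exponentials cancel and the $i$-th factor is bounded by $1$.

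The argument for $S_t^-$ is symmetric: I would apply the same elementary inequality to $x = \widehat\sigma_i^2 - Y_i \in [-1,1]$ (so that the constraint $x \leq 1$ still holds), bound $\exp\{\lambda_i(\widehat\sigma_i^2 - Y_i) - \psi_E(\lambda_i)(Y_i - \widehat\sigma_i^2)^2\}$ by $1 + \lambda_i(\widehat\sigma_i^2 - Y_i)$, write $\lambda_i(\tilde\sigma_i^2 - Y_i) = \lambda_i(\widehat\sigma_i^2 - Y_i) + \lambda_i(\tilde\sigma_i^2 - \widehat\sigma_i^2)$, and conclude identically. The only real subtlety is the decoupling of the two predictable sequences $\widehat\mu_i$ and $\widehat\sigma_i$: $\widehat\mu_i$ enters the exponent only through the transformation $Y_i$ (and hence through $\tilde\sigma_i^2 = \E_{i-1}[Y_i]$), while $\widehat\sigma_i$ acts as the empirical variance proxy inside $\psi_E$. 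The algebraic splitting above is precisely what lets $\widehat\sigma_i$ be chosen freely and empirically, without needing to match the unknown $\tilde\sigma_i^2$ inside the quadratic correction — which is what makes the resulting confidence sequences sharp and actionable downstream.
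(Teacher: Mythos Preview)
Your proposal is correct and matches the paper's proof almost verbatim: both apply Fan's inequality $\exp(\lambda\xi - \psi_E(\lambda)\xi^2) \leq 1 + \lambda\xi$ with $\xi = (X_i - \widehat\mu_i)^2 - \widehat\sigma_i^2$, factor out the predictable shift $\exp\{\lambda_i(\widehat\sigma_i^2 - \tilde\sigma_i^2)\}$, take $\E_{i-1}$, and close with $1+u \leq e^u$. One small slip worth fixing: the inequality you \emph{state} (the additive form $\exp(\lambda x)\leq 1+\lambda x+\psi_E(\lambda)x^2$ with condition $x\leq 1$) is not the one you actually \emph{apply} in the display (Fan's form, whose hypothesis is $\xi \geq -1$); since $(X_i-\widehat\mu_i)^2-\widehat\sigma_i^2 \in [-1,1]$ the correct hypothesis holds either way and the argument goes through.
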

Theorem~\ref{theorem:main_supermartingale_theorem} modifies the supermartingales that give way to Theorem~\ref{theorem:empirical_bernstein_ian}. However, in contrast to Theorem~\ref{theorem:empirical_bernstein_ian}, the conditional means of the random variables $(X_i - \widehat\mu_i)^2$ under study are not constant. For this reason, the analysis is more convoluted in our setting. Hence, in order to provide concentration results for the variance, we denote
\begin{align*}
    R_{t, \alpha} &:= \frac{\log(1/\alpha) + \sum_{i \leq t}\psi_E(\lambda_i) \lp (X_i - \widehat\mu_i)^2 - \widehat \sigma_i^2 \rp^2}{\sum_{i \leq t} \lambda_i},
    \\  D_t &:= \frac{\sum_{i \leq t}\lambda_i (X_i - \widehat\mu_i)^2}{\sum_{i \leq t} \lambda_i}, \quad E_t :=  \frac{\sum_{i \leq t} \lambda_i (\widehat\mu_i - \mu)^2}{\sum_{i \leq t} \lambda_i},
\end{align*}
where $D_t$ and $R_{t, \alpha}$ will represent the center and radius of the confidence interval (respectively), and $E_t$ an extra term that appears due to the mean estimator error. The following corollary is a direct consequence of Theorem~\ref{theorem:main_supermartingale_theorem}, as a result of applying Ville's inequality to the nonnegative supermartingales. We defer its proof to Appendix~\ref{proof:main_corollary}.
\begin{corollary} \label{corollary:main_corollary}
    Let Assumption~\ref{ass:main_assumption} hold.  For any $[0, 1)$-valued predictable sequence $(\lambda_i)_{i \geq 1}$ and any $[0, 1]$-valued predictable sequences $(\widehat\mu_i)_{i \geq 1}$ and $(\widehat\sigma_i)_{i \geq 1}$, it holds that
    \begin{align*}
    \lp D_t -  E_t \pm R_{t, \frac{\alpha}{2}}\rp
\end{align*}
is a $1-\alpha$ confidence sequence for $\sigma^2$.
\end{corollary}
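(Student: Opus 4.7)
The plan is to derive the two-sided confidence sequence for $\sigma^2$ by applying Ville's inequality (Theorem~\ref{theorem:villesinequality}) separately to the two nonnegative supermartingales $(S_t^+)_{t \geq 0}$ and $(S_t^-)_{t \geq 0}$ from Theorem~\ref{theorem:main_supermartingale_theorem}, each at level $\alpha/2$, and then combining them by a union bound. Since $\E S_0^+ = \E S_0^- = 1$, Ville's inequality gives $\Pb(\exists t \geq 0: S_t^{\pm} \geq 2/\alpha) \leq \alpha/2$.

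The key algebraic step is to translate each tail event into an inequality on $\sigma^2$. First I would take logarithms inside the event $\{S_t^+ \geq 2/\alpha\}$, yielding
\begin{equation*}
\sum_{i \leq t} \lambda_i \lb (X_i - \widehat\mu_i)^2 - \tilde\sigma_i^2 \rb - \sum_{i \leq t} \psi_E(\lambda_i) \lb (X_i - \widehat\mu_i)^2 - \widehat\sigma_i^2 \rb^2 \geq \log(2/\alpha).
\end{equation*}
Substituting $\tilde\sigma_i^2 = \sigma^2 + (\widehat\mu_i - \mu)^2$, dividing through by $\sum_{i \leq t} \lambda_i > 0$ (which is positive since $\lambda_1 > 0$ is implicit in the predictable plug-in; if not, one argues trivially at such $t$), and using the definitions of $D_t$, $E_t$, and $R_{t, \alpha/2}$, this rearranges to
\begin{equation*}
\sigma^2 \leq D_t - E_t - R_{t, \alpha/2}.
\end{equation*}
Hence on the complementary event, which occurs with probability at least $1 - \alpha/2$ simultaneously for all $t$, we have $\sigma^2 > D_t - E_t - R_{t, \alpha/2}$. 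An entirely symmetric manipulation on $\{S_t^- \geq 2/\alpha\}$ yields, on the complementary event (again of probability at least $1 - \alpha/2$ simultaneously over $t$), the upper bound $\sigma^2 < D_t - E_t + R_{t, \alpha/2}$.

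Finally, a union bound over these two complementary events gives the stated two-sided confidence sequence at level $1 - \alpha$. I expect no serious obstacle here: Theorem~\ref{theorem:main_supermartingale_theorem} has already done the heavy lifting of constructing the supermartingales, so this corollary reduces to a bookkeeping exercise in exponentiation, sign tracking, and a union bound. The only mildly delicate point to verify is that the sign of the $\psi_E(\lambda_i)(\cdots)^2$ term is consistent (i.e., subtracted) in both $S_t^+$ and $S_t^-$, so that after taking logs and isolating $\sigma^2$ it lands on the \emph{same} side of the inequality in both directions, justifying the symmetric $\pm R_{t, \alpha/2}$ form of the final interval.
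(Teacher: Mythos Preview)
Your proposal is correct and follows essentially the same approach as the paper's proof: apply Ville's inequality to each of $S_t^+$ and $S_t^-$ at level $\alpha/2$, take logarithms, substitute $\tilde\sigma_i^2 = \sigma^2 + (\widehat\mu_i - \mu)^2$, divide by $\sum_{i\le t}\lambda_i$, and combine via a union bound. Your remark about the $\psi_E(\lambda_i)(\cdots)^2$ term carrying the same sign in both supermartingales is exactly the observation that makes the symmetric $\pm R_{t,\alpha/2}$ form work.
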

The confidence sequence provided by Corollary~\ref{corollary:main_corollary} cannot be invoked in practice: since $\mu$ is unknown, $E_t$ is also unknown.

\subsection{Upper Confidence Sequence for the Variance} \label{section:upper_bound}

In spite of $E_t$ being unknown, this term poses no challenge for the upper confidence sequence, as we can simply lower bound it by $0$. That is, if $[0, D_t - E_t + R_{t, \alpha})$ is an $\alpha$-level upper confidence sequence for the variance, so is $[0, D_t + R_{t, \alpha})$, given that $E_t$ is nonnegative. We formalize such an observation in the following corollary. %  Note that this is rather intuitive: if we fail at estimating $\mu$ via $\widehat\mu_i$, we are effectively inflating $(X_i - \widehat\mu_i)^2$ with respect to $(X_i - \mu)^2$ in expectation. If seeking an upper confidence sequence, we can simply ignore this inflation, which yields the following corollary.
\begin{corollary} [Upper empirical Bernstein for the variance] \label{corollary:upper_bound}
    Let Assumption~\ref{ass:main_assumption} hold.  For any $[0, 1]$-valued predictable sequences $(\widehat\mu_i)_{i \geq 1}$ and $(\widehat\sigma_i)_{i \geq 1}$, and any $[0, 1)$-valued predictable sequence $(\lambda_i)_{i \geq 1}$, it holds that $\lb 0, U_t\rp$ is a $1-\alpha$ upper confidence sequence for $\sigma^2$, where
    \begin{align*}
        U_t = D_t+ R_{t, \alpha}.
    \end{align*}
\end{corollary}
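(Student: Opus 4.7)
The plan is to apply Ville's inequality directly to one of the two supermartingales of Theorem~\ref{theorem:main_supermartingale_theorem} rather than going through the two-sided Corollary~\ref{corollary:main_corollary}, so that the full error budget $\alpha$ can be spent on the upper tail alone. The relevant choice is $(S_t^-)_{t\geq 0}$: its exponent contains the terms $\lambda_i[\tilde\sigma_i^2 - (X_i-\widehat\mu_i)^2]$, so $S_t^-$ becomes large precisely when the weighted empirical second moment falls far below $\sigma^2 + E_t$ (times $\sum_{i\leq t}\lambda_i$), which is exactly the event we must rule out in order to upper bound $\sigma^2$.

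First, by Theorem~\ref{theorem:main_supermartingale_theorem}, $(S_t^-)_{t\geq 0}$ is a nonnegative supermartingale with $S_0^- = 1$. Applying Ville's inequality (Theorem~\ref{theorem:villesinequality}) with $x = 1/\alpha$ gives $\Pb(\exists t : S_t^- \geq 1/\alpha) \leq \alpha$. On the complementary event, I would take logarithms of $S_t^-$, substitute $\tilde\sigma_i^2 = \sigma^2 + (\widehat\mu_i - \mu)^2$, and divide through by $\sum_{i \leq t}\lambda_i$ (treating the degenerate case $\sum_{i \leq t}\lambda_i = 0$ as trivial, since then $R_{t,\alpha} = \infty$ and $U_t$ is vacuously an upper bound). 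This rearrangement gives, simultaneously for all $t$,
\begin{align*}
    \sigma^2 + E_t < D_t + R_{t, \alpha}.
\end{align*}

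To conclude, I would note that $E_t \geq 0$ almost surely, since it is a weighted average of the nonnegative quantities $(\widehat\mu_i - \mu)^2$ with nonnegative weights $\lambda_i$. Therefore, on the $(1-\alpha)$-probability event above,
\begin{align*}
    \sigma^2 \;\leq\; \sigma^2 + E_t \;<\; D_t + R_{t,\alpha} \;=\; U_t
\end{align*}
for all $t$, which is exactly the desired upper confidence sequence.

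There is no substantive obstacle once Theorem~\ref{theorem:main_supermartingale_theorem} is in hand; the proof is essentially a one-sided specialization of Corollary~\ref{corollary:main_corollary}, the key saving being that a single application of Ville's inequality to $S_t^-$ lets us use $R_{t,\alpha}$ rather than $R_{t,\alpha/2}$, avoiding the union bound needed for a two-sided statement. The only conceptual point worth flagging is that discarding the (unknown) term $E_t$ is harmless for the upper side precisely because it is almost surely nonnegative.
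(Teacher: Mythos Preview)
Your proof is correct and matches the paper's approach: the paper derives Corollary~\ref{corollary:upper_bound} by taking the one-sided part of Corollary~\ref{corollary:main_corollary} (equivalently, applying Ville's inequality to $S_t^-$ alone with threshold $1/\alpha$) and then dropping the nonnegative term $E_t$. Your explicit handling of the degenerate case $\sum_{i\leq t}\lambda_i = 0$ and the clear identification of why $S_t^-$ (rather than $S_t^+$) is the relevant supermartingale are both nice touches.
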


It remains to discuss the choice of predictable sequences. We suggest to take
\begin{align*}
    \widehat\sigma_{t}^2 &:= \frac{c_3 + \sum_{i \leq t-1} (X_i - \bar\mu_i)^2}{t}, \quad \bar\mu_t := \frac{c_4 + \sum_{i\leq t-1}X_i}{t},
\end{align*}
where $c_3, c_4 \in [0,1]$ are constant, as well as $\widehat\mu_t = \bar\mu_t$. These specific choices are only proposed due to their computational simplicity; but our upper bound holds for any other choice of mean and variance estimator.

Following the discussion from \citet[Section 3.3]{waudby2024estimating} for confidence sequences, we propose to take the predictable plug-ins
\begin{align*}
    \lambda_{t, u, \alpha}^{\text{CS}} &:= \sqrt\frac{2\log(1/\alpha)}{\widehat m_{4, t}^2 t \log (1+t)} \wedge c_1
\end{align*}
where  
\begin{align*}
    \widehat m_{4, t}^2 &:= \frac{c_2 + \sum_{i \leq t-1} \lb (X_i - \widehat\mu_i)^2 - \widehat\sigma_i^2 \rb^2 }{t}, 
\end{align*}
with $c_1 \in (0,1)$, and $c_2 \in [0,1]$. Reasonable defaults are $c_1 = \frac{1}{2}$, $c_2 = \frac{1}{2^4}$, $c_3 = \frac{1}{2^2}$, and $c_4 = \frac{1}{2}$. The values  $c_2-c_4$  regularize the mean and variance estimators, so their impact decays considerably fast. Similarly to \citet{waudby2024estimating}, the constant $c_1$ prevents the predictable sequence from exploding, and the performance is not highly affected by choices that are not too close to $1$.

\subsection{Lower Confidence Sequence for the Variance} \label{section:lower_bound}
In order to provide a lower confidence sequence, we must control the term $E_t$, which depends on the terms $|\mu-\widehat\mu_i|$, with $i \leq t$. This can be done if $(\widehat\mu_t)_{t \geq 1}$ is such that a confidence sequence for $|\widehat\mu_i - \mu|$ can be provided (we ought to use confidence sequences instead of confidence intervals in order to avoid union bounding over all $i \leq t$). If $|\widehat\mu_i - \mu| \leq \tilde R_{i, \alpha_1}$ for all $i \geq 1$ with probability $1-\alpha_1$, then
\begin{align} \label{eq:lower_bound_with_lambdas}
    D_t - \frac{\sum_{i \leq t}\lambda_i \tilde R_{i, \alpha_1}^2}{\sum_{i \leq t} \lambda_i} -  R_{t, \alpha_2}
\end{align}
yields a $(1-\alpha)$-lower confidence sequence for $\sigma^2$ with $\alpha_1 + \alpha_2 = \alpha$.

Naturally, we aim for $\tilde R_{i,\alpha_1}$ to be as small as possible. Given the strong practical performance of empirical Bernstein inequalities, one might initially consider selecting yet another such inequality for $\mu$ in order to derive $\tilde R_{i,\alpha_1}$. However, a better approach is available. If $\tilde R_{i,\alpha_1}$ depends linearly on $\sigma$, we can obtain a closed-form confidence interval for $\sigma$ by solving a quadratic equation. In particular, we may combine the empirical mean in conjunction with the oracle Bennett inequality to yield closed-form confidence intervals. Although alternative options exist, such as \citet[Theorem~1]{lee2022optimal}, we favor the theoretical Bennett inequality because it yields explicit and small constants (unlike e.g. the aforementioned alternative). Furthermore, the theoretical Bennett inequality used holds with anytime validity and under martingale dependence. We also deliberately avoid betting-based approaches, as our goal is to obtain closed-form confidence intervals.

In particular, we propose to obtain $ \tilde R_{i, \delta}$ based on the anytime valid Bennett's inequality presented in Theorem~\ref{theorem:bennett_anytimevalid}. That is, take 
\begin{align} \label{eq:widehat_mu_definition}
    \widehat\mu_t = \frac{\sum_{i=1}^{t-1} \tilde \lambda_i X_i}{\sum_{i=1}^{t-1} \tilde \lambda_i}, \; \tilde R_{t, \alpha_1} = \frac{\log(2/\alpha_1) + \sigma^2\sum_{i=1}^{t-1} \psi_P(\tilde \lambda_i)}{\sum_{i=1}^{t-1} \tilde \lambda_i}, 
\end{align}
for $t \geq 2$, as well as $\widehat\mu_1 = \frac{1}{2}$ and $\tilde R_{1, \alpha_1} = \frac{1}{2}$. Substituting \eqref{eq:widehat_mu_definition} in \eqref{eq:lower_bound_with_lambdas} leads to a quadratic polynomial on $\sigma^2$. Equaling $\sigma^2$ to such a polynomial and solving for $\sigma^2$ yields our lower confidence sequence. In order to formalize this, denote
\begin{align*}
    \tilde C^{(2)}_{t} &:= \frac{\lp \sum_{i=1}^{t-1} \psi_P(\tilde \lambda_i)\rp^2}{\lp \sum_{i=1}^{t-1} \tilde \lambda_i \rp^2}, 
    \quad \tilde C^{(0)}_{t, \delta}:= \frac{\log^2(2/\delta) }{\lp \sum_{i=1}^{t-1} \tilde \lambda_i \rp^2},
    \\ \tilde C^{(1)}_{t, \delta} &:= \frac{2\log(2/\delta)\sum_{i=1}^{t-1} \psi_P(\tilde \lambda_i)}{\lp \sum_{i=1}^{t-1} \tilde \lambda_i \rp^2}, 
\end{align*}
 as well as
\begin{align*}
    C^{(2)}_{t} &:=  \frac{\sum_{i \leq t}\lambda_i \tilde C^{(2)}_{i}}{\sum_{i \leq t} \lambda_i}, 
    \quad C^{(0)}_{t, \delta} := \frac{\sum_{i \leq t}\lambda_i \tilde C_{i, \delta}}{\sum_{i \leq t} \lambda_i},
    \\ C^{(1)}_{t, \delta} &:= 1 +  \frac{\sum_{i \leq t}\lambda_i \tilde B_{i, \delta}}{\sum_{i \leq t} \lambda_i}
\end{align*}
(the numbers represent the degrees in the quadratic polynomial). Under this notation, we are ready to present Corollary~\ref{corollary:lower_bound}, a lower confidence sequence for the variance. Its proof has been deferred to Appendix~\ref{proof:lower_bound}.
\begin{corollary} [Lower empirical Bernstein for the variance] \label{corollary:lower_bound}
    Let Assumption~\ref{ass:main_assumption} hold. For $(\widehat\mu_i)_{i\geq1}$ defined as in \eqref{eq:widehat_mu_definition}, any $[0, 1]$-valued predictable sequence $(\widehat\sigma_i^2)_{i \geq 1}$, any $[0, 1)$-valued predictable sequence $(\lambda_i)_{i \geq 1}$, and any $[0, \infty)$-valued predictable sequence $(\tilde\lambda_i)_{i \geq 1}$, it holds that $\lp L_t, \infty\rp$ is a $1-\alpha$ lower confidence sequence for $\sigma^2$, 
    where $\alpha_1 + \alpha_2 = \alpha$ and $L_t$ equals
    \begin{align} \label{eq:lower_bound_process}
        \frac{-C^{(1)}_{t, \alpha_1} + \sqrt{\lp C^{(1)}_{t, \alpha_1}\rp^2 + 4C^{(2)}_t(D_t - C^{(0)}_{t, \alpha_1} - R_{t, \alpha_2})}}{2C^{(2)}_t}.
    \end{align}
\end{corollary}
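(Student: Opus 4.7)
The plan is to combine the one-sided version of Corollary~\ref{corollary:main_corollary} with the anytime-valid Bennett inequality (Theorem~\ref{theorem:bennett_anytimevalid}) via a union bound, and then to ``untangle'' $\sigma^2$ from the resulting quadratic inequality. Concretely, I would first invoke only the supermartingale $(S_t^-)_{t\geq 0}$ from Theorem~\ref{theorem:main_supermartingale_theorem} together with Ville's inequality at level $\alpha_2$; this yields that, with probability at least $1-\alpha_2$, simultaneously for all $t\geq 1$,
\begin{align*}
    \sigma^2 \;\geq\; D_t - E_t - R_{t,\alpha_2},
\end{align*}
where the lower tail uses $\log(1/\alpha_2)$ (not $\log(2/\alpha_2)$) since only one supermartingale is needed.

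Next I would control $E_t$. Since $\widehat\mu_t$ in \eqref{eq:widehat_mu_definition} is exactly the Bennett-weighted estimator through time $t-1$, Theorem~\ref{theorem:bennett_anytimevalid} (applied at level $\alpha_1$) gives that, with probability at least $1-\alpha_1$,
\begin{align*}
    |\widehat\mu_i - \mu| \;\leq\; \tilde R_{i,\alpha_1}
    \;=\; \frac{\log(2/\alpha_1) + \sigma^2 \sum_{j\leq i-1}\psi_P(\tilde\lambda_j)}{\sum_{j\leq i-1}\tilde\lambda_j}
\end{align*}
for all $i\geq 2$, while for $i=1$ the bound $|\widehat\mu_1-\mu|\leq \frac12 = \tilde R_{1,\alpha_1}$ holds deterministically. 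Squaring gives $(\widehat\mu_i-\mu)^2 \leq \tilde C_{i,\alpha_1} + \sigma^2\,\tilde B_{i,\alpha_1} + \sigma^4\,\tilde A_i$, and averaging this bound with weights $\lambda_i/\sum_{j\leq t}\lambda_j$ yields
\begin{align*}
    E_t \;\leq\; C_{t,\alpha_1} + \sigma^2(B_{t,\alpha_1}-1) + \sigma^4 A_t,
\end{align*}
where the $-1$ comes from having defined $B_{t,\delta}$ with an additive $1$.

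A union bound over the two events (at levels $\alpha_1$ and $\alpha_2$) gives, with probability at least $1-\alpha$ and uniformly in $t$,
\begin{align*}
    \sigma^2 \;\geq\; D_t - C_{t,\alpha_1} - \sigma^2(B_{t,\alpha_1}-1) - \sigma^4 A_t - R_{t,\alpha_2},
\end{align*}
which rearranges to the quadratic (in $\sigma^2$) inequality
\begin{align*}
    A_t \,\sigma^4 + B_{t,\alpha_1}\,\sigma^2 - \bigl(D_t - C_{t,\alpha_1} - R_{t,\alpha_2}\bigr) \;\geq\; 0.
\end{align*}
Since $A_t, \sigma^2 \geq 0$, the quadratic formula (keeping the nonnegative root) forces $\sigma^2 \geq L_t$ as defined in \eqref{eq:lower_bound_process}.

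The only delicate step I anticipate is the $t=1$ base case in the Bennett argument (where the weighted sums in the denominator of $\tilde R_{t,\alpha_1}$ are empty), which is handled by the explicit definitions $\widehat\mu_1=\tilde R_{1,\alpha_1}=\tfrac12$; apart from that, the argument is bookkeeping, with the main nontrivial content being the observation that $\tilde R_{i,\alpha_1}^2$ is a polynomial of degree two in $\sigma^2$ whose coefficients are exactly the predictable quantities $\tilde A_i, \tilde B_{i,\alpha_1}, \tilde C_{i,\alpha_1}$, so that after averaging one is left with a genuine scalar quadratic that can be solved in closed form.
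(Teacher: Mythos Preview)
Your proposal is correct and follows essentially the same route as the paper: combine the one-sided bound $\sigma^2 \geq D_t - E_t - R_{t,\alpha_2}$ (from $S_t^-$ and Ville at level $\alpha_2$) with the Bennett confidence sequence for $\mu$ at level $\alpha_1$, expand $\tilde R_{i,\alpha_1}^2$ as a quadratic in $\sigma^2$ with coefficients $\tilde A_i,\tilde B_{i,\alpha_1},\tilde C_{i,\alpha_1}$, average, and solve the resulting scalar quadratic. If anything, your write-up is slightly more careful than the paper's, which simply asserts ``it suffices to consider $\sigma^2\geq\sigma_{l,t}^2$'' at the fixed point without spelling out why the larger root is the relevant one.
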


It remains to discuss the choice of predictable plug-ins. Analogously to the upper inequality plug-ins, it would be natural to take $\lambda_{t,l, \alpha_2}^{\text{CS}} = \lambda_{t,u, \alpha_2}^{\text{CS}}$. However, the lower inequality includes the extra terms $\tilde R_{i, \alpha_1}$ that ought to be accounted for. Taking $\lambda_i > 0$ for $i$ such that $\tilde R_{i, \alpha_1} > 1$ would add a summand that is vacuous.\footnote{It would also be reasonable to take the minimum of $\tilde R_{i, \alpha_1}$ and $1$. We explore this alternative in Appendix~\ref{section:alternative_approaches}.} For this reason, we propose to take $\lambda_{t,l, \alpha_{2}}^{\text{CS}} := \lambda_{t,u, \alpha_{2}}^{\text{CS}}$ if $t \geq 2$ and $\frac{\log(2/\alpha_1) + \hat\sigma^2_{t} \sum_{i=1}^{t-1} \psi_P(\tilde \lambda_i)}{\sum_{i=1}^{t-1} \tilde \lambda_i} \leq 1$, and $\lambda_{t,l, \alpha_{2}}^{\text{CS}} := 0$ otherwise.

Note that the threshold is only an approximation of $\tilde R_{i, \alpha_1}$, given that the latter is unknown in practice. Seeking a confidence sequence for $\mu$, we propose to take $(\tilde\lambda_i)_{i \geq 1}$ as 
\begin{align} \label{eq:lambda_tilde_definition}
    \tilde \lambda_t &:= \sqrt\frac{2\log(2/\alpha)}{\widehat \sigma_{t}^2 t \log (1+t)} \wedge c_5,
\end{align}
with $c_5$ being a constant in $(0,\infty)$, with a sensible default being $c_5 =2$. The choice of the split of $\alpha$ into $\alpha_1$ and $\alpha_2$ is also of importance. In the next section, we analyze specific splits for retrieving optimal asymptotical behavior. If having access to artificial samples similarly distributed to the random variables under study, the probability split can be optimized for using those observations as a reference. However, the split split $\alpha_1 = \alpha_2 = \frac{\alpha}{2}$ works generally well in practice.

\subsection{Upper and Lower Confidence Intervals} \label{section:ci}

In the more classical batch setting, we observe a fixed number of observations $X_1, \ldots, X_n$, with $n$ known in advance. Given that confidence sequences are, in particular, confidence intervals for a fixed $t=n$, both Corollary~\ref{corollary:upper_bound} 
and Corollary~\ref{corollary:lower_bound} immediately establish confidence intervals.  However, the choice of predictable plug-ins used in such corollaries should now be driven by minimizing the expected interval width  at a specific $t\equiv n$, rather than being tight uniformly over $t$. 

For this reason, in order to optimize the upper confidence interval for a fixed $t \equiv n$, we take
\begin{align} \label{eq:lambda_ci_definition_upper}
    \lambda_{i, u, \alpha}^{\text{CI}} := \sqrt\frac{2\log(1/\alpha)}{\widehat m_{4, i}^2 n} \wedge c_1.
\end{align}
Following the same line of reasoning as in Section~\ref{section:lower_bound}, the plug-ins for the lower confidence intervals are defined as a slight modification of those for the upper confidence sequence. Accordingly, we take $\lambda_{t,l, \alpha_2}^{\text{CI}} := \lambda_{t,u, \alpha_2}^{\text{CI}}$ if $t \geq 2$ and $\frac{\log(2/\alpha_1) + \hat\sigma^2_{t} \sum_{i=1}^{t-1} \psi_P(\tilde \lambda_i)}{\sum_{i=1}^{t-1} \tilde \lambda_i} \leq 1$, and $\lambda_{t,l, \alpha_2}^{\text{CI}}$ otherwise. The remaining parameters and estimators are defined in the same manner as in the preceding sections.

\subsubsection*{An Analysis of the Widths for the Variance} \label{section:analysis_widths}

In order to draw comparisons with related inequalities, we analyze the asymptotic first order term of the novel confidence intervals for the variance.%\footnote{We show in Appendix~\ref{section:analysis_widths_std} that the confidence intervals for the std also achieve $1/\sqrt{n}$ rates.}
 As emphasized in Section~\ref{section:introduction}, in the event of $(X_i-\mu)^2$ having constant conditional variance, the benchmark for the first order terms are those of oracle Bernstein confidence intervals, i.e. $\sqrt{2 \V \lb (X_i-\mu)^2\rb \log (1 / \alpha)}$. That is, we seek to prove that both $ \sqrt{n} (U_n - D_n)$ and $\sqrt{n} (D_n - L_n)$ converge almost surely to such quantity. Accordingly, we make the following assumption throughout. 
\begin{assumption} \label{assumption:optimal_widths}
 $X_1, X_2, \ldots$ is such that $\V_{i-1} \lb (X_i-\mu)^2\rb$ is constant across $i$.
\end{assumption}

We highlight that the concentration inequalities presented in this paper do not require Assumption~\ref{assumption:optimal_widths} to be valid. We only impose Assumption~\ref{assumption:optimal_widths} to compare the limiting width of our empirical inequalities to those of the oracle Bernstein inequality.

We will implicitly also assume that the predictable sequences $(\widehat\mu_i)_{i \in [n]}$ and $(\widehat\sigma_i^2)_{i \in [n]}$ are defined as in Section~\ref{section:upper_bound} or Section~\ref{section:lower_bound}, with $c_2 \wedge c_3 > 0$.
The condition $c_2 \wedge c_3 > 0$ is not necessary for the proofs to hold, but they follow cleaner with it. 

For simplicity, we will focus on the asymptotic behavior of both 
\begin{align*}
    \sqrt{n} R_{n, \alpha}, \quad \sqrt{n} \lp \frac{\sum_{i \leq t}\lambda_{i, \alpha_{2, n}} \tilde R_{i, \alpha_{1, n}}^2}{\sum_{i \leq t} \lambda_{i, \alpha_{2, n}}} + R_{n, \alpha_{2, n}} \rp,
\end{align*}
where we define $\lambda_{i, \alpha_{2, n}} = \lambda_{t,u, \alpha_{2, n}}^{\text{CI}}$. These two quantities correspond to the first order widths of the upper and lower confidence intervals above and below the estimate $D_t$, respectively, if taking the plug-ins $\lambda_{i, \alpha_{2, n}}$.\footnote{Note that the plug-ins proposed in Section~\ref{section:ci} are slightly different (they may take the value $0$ for small enough $t$). However, the proofs can be easily extended to $\lambda_{t,l, \alpha_2}^{\text{CI}}$  after realizing that these two plug-ins are equal almost surely for big enough $t$; we believe the simplification is convenient for ease of presentation.} Note that, in contrast to the previous section, we emphasize the dependence of the split $\alpha = \alpha_{1, n} + \alpha_{2, n}$ on $n$, which we will exploit to recover optimal first order terms.

We decouple the analysis in two parts, one involving the $R_i$'s and the other involving the $\tilde R_i$'s.  We start by establishing that the former converges almost surely to the oracle Bernstein first order term for the right choices of $\alpha_{2, n}$. The proof can be found in Appendix~\ref{proof:convergence_main_term}.

\begin{theorem} \label{theorem:convergence_main_term}
     Let $(\delta_n)_{n\geq1}$ be a deterministic sequence such that $\delta_{n} > 0$ and $\delta_{n} \nearrow  \delta > 0$. If Assumption~\ref{ass:main_assumption} and Assumption~\ref{assumption:optimal_widths} hold, then 
    \begin{align*}
        \sqrt{n} R_{n, \delta_{n}} \stackrel{a.s.}{\to} \sqrt{2 \V \lb (X_i-\mu)^2\rb \log (1/\delta)}.
    \end{align*}
\end{theorem}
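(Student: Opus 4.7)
The plan is to show that both the numerator and denominator of $R_{n, \delta_n}$ behave deterministically to leading order, after which the conclusion follows by direct computation. Throughout, write $\lambda_i = \lambda_{i, u, \delta_n}^{\mathrm{CI}}$ and set $V \defined \V[(X_i-\mu)^2]$.

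The first step is a set of strong-consistency facts for the plug-in estimators. Under Assumption~\ref{ass:main_assumption} and Assumption~\ref{assumption:optimal_widths}, the martingale strong law (e.g.\ Kolmogorov/Azuma applied to $X_t-\mu$ and to $(X_t-\mu)^2-\sigma^2$, both bounded martingale difference sequences with uniformly bounded conditional variance) yields $\bar\mu_t \to \mu$ and $t^{-1}\sum_{i \leq t}(X_i-\bar\mu_i)^2 \to \sigma^2$ almost surely, whence $\widehat\mu_i \to \mu$ and $\widehat\sigma_i^2 \to \sigma^2$ a.s. Applying the same tool to the bounded sequence $\bigl((X_i-\mu)^2-\sigma^2\bigr)^2$ together with the continuous mapping $(\widehat\mu_i,\widehat\sigma_i^2)\mapsto \widehat m_{4,i}^2$ gives $\widehat m_{4,t}^2 \to V$ a.s.

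Next I control the predictable plug-ins. Since $\widehat m_{4, i}^2 \to V > 0$ and $\delta_n \nearrow \delta$, for any $\epsilon>0$ eventually $\lambda_i = \sqrt{2\log(1/\delta_n)/(\widehat m_{4,i}^2 \, n)}$ (the $c_1$ cap is inactive once $n$ is large), and $\sqrt{n}\,\lambda_i \to \sqrt{2\log(1/\delta)/V}$ a.s. By Cesàro, $n^{-1/2}\sum_{i\leq n}\lambda_i \to \sqrt{2\log(1/\delta)/V}$, so
\[
    \sum_{i \leq n} \lambda_i \;\sim\; \sqrt{\tfrac{2 n \log(1/\delta)}{V}} \quad \text{a.s.}
\]
For the quadratic term, the Taylor expansion $\psi_E(\lambda)=\tfrac{\lambda^2}{2}+O(\lambda^3)$ near $0$ and $\lambda_i\to 0$ give $\psi_E(\lambda_i)/\lambda_i^2 \to 1/2$ uniformly in $i$ once $n$ is large, so $\psi_E(\lambda_i) \sim \log(1/\delta)/(V n)$. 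A second application of Cesàro (this time to the bounded sequence $((X_i-\widehat\mu_i)^2-\widehat\sigma_i^2)^2$, which converges in Cesàro mean to $V$ by the consistency above together with the SLLN-type argument for its leading part $((X_i-\mu)^2-\sigma^2)^2$) yields
\[
    \sum_{i\leq n}\psi_E(\lambda_i)\bigl((X_i-\widehat\mu_i)^2-\widehat\sigma_i^2\bigr)^2 \;\to\; \log(1/\delta) \quad\text{a.s.}
\]
Combining, the numerator of $R_{n,\delta_n}$ satisfies $\log(1/\delta_n)+\sum_{i\leq n}\psi_E(\lambda_i)(\cdots)^2 \to 2\log(1/\delta)$ a.s., and dividing by the denominator asymptotic above gives
\[
    \sqrt n\, R_{n,\delta_n} \;\to\; \frac{2\log(1/\delta)}{\sqrt{2\log(1/\delta)/V}} \;=\; \sqrt{2 V \log(1/\delta)}.
\]

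The main obstacle is the non-iid nature of the setting: both the $\widehat\mu_i,\widehat\sigma_i^2$ plug-ins and the sequence $((X_i-\widehat\mu_i)^2-\widehat\sigma_i^2)^2$ are adapted but not independent across $i$, so I cannot appeal to the classical SLLN. The cleanest way around this is to write each Cesàro-type limit as a sum of (i) a martingale difference sequence with bounded conditional variance (to which a martingale strong law applies) and (ii) a deterministic conditional-mean part that already equals the right limit thanks to Assumption~\ref{assumption:optimal_widths}. Everything else—the Taylor control on $\psi_E$, the uniform removal of the $c_1$ cap, and handling the $n$-dependence of $\lambda_i$—is routine once that strong consistency is in hand.
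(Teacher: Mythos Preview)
Your argument handles the case $V := \V\bigl[(X_i-\mu)^2\bigr] > 0$ and does so in essentially the same way as the paper: establish $\widehat m_{4,t}^2 \to V$ by a martingale SLLN plus continuous-mapping argument, use this to control $n^{-1/2}\sum_i \lambda_i$ and $\sum_i \psi_E(\lambda_i)\bigl((X_i-\widehat\mu_i)^2-\widehat\sigma_i^2\bigr)^2$ via Ces\`aro-type reasoning, and divide. (Your Ces\`aro step is a little loose because $\lambda_i$ depends on both $i$ and $n$, so it is really a triangular-array average---for fixed $i$, $\sqrt{n}\,\lambda_i \to \sqrt{2\log(1/\delta)/\widehat m_{4,i}^2}$, not $\sqrt{2\log(1/\delta)/V}$---but the correct version only needs the extra observation that $\widehat m_{4,i}^2 \to V$ as $i\to\infty$, and the paper packages this in auxiliary lemmas.)

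The genuine gap is that you have \emph{assumed} $V>0$ throughout (``Since $\widehat m_{4,i}^2 \to V > 0$\ldots''), and the theorem must also cover $V=0$, in which case the target is $0$. This case is not vacuous: for a symmetric two-point distribution, $(X_i-\mu)^2$ is almost surely constant. When $V=0$, every step of your argument that divides by $V$ fails: the denominator asymptotic $\sum_i \lambda_i \sim \sqrt{2n\log(1/\delta)/V}$ is meaningless, and likewise the claim $\psi_E(\lambda_i)\sim \log(1/\delta)/(Vn)$. The paper treats this case separately and with some work: it shows that $\widehat m_{4,t}^2 = \tilde O(1/t)$ (via an iterated-logarithm bound on $|\widehat\mu_i-\mu|$), from which it deduces that the numerator $\log(1/\delta_n)+\sum_i \psi_E(\lambda_i)(\cdots)^2$ stays $\tilde O(1)$ while $\bigl(n^{-1/2}\sum_i \lambda_i\bigr)^{-1} = \tilde O(n^{-1/2})$, so the product goes to $0$. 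None of this follows from the $V>0$ analysis, and it is the harder half of the proof.
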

Second, we prove that the extra term that appears in the lower confidence interval converges to zero almost surely for right choices of $\alpha_{1, n}$. The proof can be found in Appendix~\ref{proof:lower_order_term}.
\begin{theorem} \label{theorem:lower_order_term}
    Let $\alpha = \alpha_{1, n} + \alpha_{2, n}$ be such that $\alpha_{1, n} =  \Omega \lp \frac{1}{\log(n)} \rp$ and $\alpha_{2, n} \to \alpha$. If Assumption~\ref{ass:main_assumption} and Assumption~\ref{assumption:optimal_widths} hold, then
    \begin{align*}
    \sqrt{n}\frac{\sum_{i \leq t}\lambda_{i, \alpha_{2, n}} \tilde R_{i, \alpha_{1,n}}^2}{\sum_{i \leq t} \lambda_{i, \alpha_{2, n}}} \stackrel{a.s.}{\to} 0.
\end{align*}
\end{theorem}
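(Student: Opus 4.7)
The plan is to derive asymptotic rates for the numerator and denominator separately and check that the extra $\sqrt{n}$ factor is insufficient to prevent the ratio from vanishing. Heuristically, $\lambda_{i, \alpha_{2,n}}$ is of order $1/\sqrt{n}$ while $\tilde R_{i, \alpha_{1,n}}^2$ is of order $(\log\log n)^2 \log(i)/i$ (exploiting $\alpha_{1,n} = \Omega(1/\log n)$), so the weighted average is of order $(\log n)^2(\log\log n)^2/n$, which still tends to zero even after multiplication by $\sqrt{n}$. Rigorously, I would begin by establishing the almost sure limits $\widehat\sigma_i^2 \to \sigma^2$ and $\widehat m_{4,i}^2 \to m_4^2 \defined \V \lb (X_i-\mu)^2 \rb$ (the latter being a well-defined constant by Assumption~\ref{assumption:optimal_widths}). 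Since the stream is bounded and has constant conditional first two moments and constant conditional variance of $(X_i-\mu)^2$, applying a martingale strong law of large numbers to the centered sequences $\{(X_i-\mu)^2-\sigma^2\}$ and $\{((X_i-\mu)^2-\sigma^2)^2-m_4^2\}$ yields the corresponding unweighted SLLNs; the adjustments from centering at $\bar\mu_i$ rather than $\mu$ and from the additive constants $c_2, c_3$ are absorbed using $\bar\mu_i \to \mu$ a.s.\ and a Ces\`aro argument.

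With these convergences in hand, I would extract rates for the Bennett plug-ins: since $\widehat\sigma_i^2 \to \sigma^2 > 0$, the truncation in~\eqref{eq:lambda_tilde_definition} is eventually inactive and $\tilde\lambda_i \sim \sqrt{2\log(2/\alpha)/(\sigma^2 i \log(1+i))}$ a.s. Combining with the integral estimates $\sum_{j \leq i} 1/\sqrt{j \log j} \sim 2\sqrt{i/\log i}$ and $\sum_{j \leq i} 1/(j \log j) \sim \log\log i$, and the bound $\psi_P(\lambda) \leq (e^{c_5}/2)\lambda^2$ on $[0, c_5]$, one obtains a.s.\ that $\sum_{j=1}^{i-1} \tilde\lambda_j = \Theta(\sqrt{i/\log i})$ and $\sum_{j=1}^{i-1} \psi_P(\tilde\lambda_j) = \bigO(\log\log i)$. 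Substituting into~\eqref{eq:widehat_mu_definition} and using $\log(2/\alpha_{1,n}) = \bigO(\log\log n)$ (which follows from $\alpha_{1,n} = \Omega(1/\log n)$) gives, for some a.s.\ finite random constant $C(\omega)$, the key uniform bound
\begin{align*}
    \tilde R_{i, \alpha_{1,n}}^2 \leq C(\omega) (\log\log n)^2 \frac{\log i}{i} \quad \text{for all } i \geq i_0,
\end{align*}
with a cruder uniform bound $\tilde R_{i, \alpha_{1,n}}^2 \leq C(\omega)(\log\log n)^2$ valid for the finitely many $i < i_0$.

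The analogous rate argument using $\widehat m_{4,i}^2 \to m_4^2 > 0$ yields $\sqrt n\, \lambda_{i, \alpha_{2,n}} \to \sqrt{2\log(1/\alpha)/m_4^2}$ a.s.\ (using $\alpha_{2,n} \to \alpha$), hence $\sum_{i \leq n}\lambda_{i, \alpha_{2,n}} \sim \sqrt{n}\sqrt{2\log(1/\alpha)/m_4^2}$ a.s.\ by Ces\`aro. Assembling the pieces,
\begin{align*}
    \sum_{i \leq n}\lambda_{i, \alpha_{2,n}} \tilde R_{i, \alpha_{1,n}}^2 \leq c_1 i_0 C(\omega)(\log\log n)^2 + \frac{C'(\omega)(\log\log n)^2}{\sqrt n}\sum_{i=i_0+1}^{n} \frac{\log i}{i} = \bigO\!\lp\frac{(\log n)^2(\log\log n)^2}{\sqrt n}\rp,
\end{align*}
so dividing by $\sum_{i \leq n}\lambda_{i, \alpha_{2,n}} = \Theta(\sqrt n)$ and multiplying by $\sqrt n$ gives $\bigO((\log n)^2(\log\log n)^2/\sqrt n) \to 0$ a.s., as required. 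The main obstacle I anticipate is the first step: the estimators $\bar\mu_i$ and $\widehat m_{4,i}^2$ depend on past data in a non-trivial iterative fashion, so one must carefully identify the right martingale differences in order to invoke the SLLN (bounded increments, predictable means, constant conditional variance) and then confirm that the resulting random constants $C(\omega)$ stay finite uniformly as $n \to \infty$.
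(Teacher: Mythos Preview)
Your approach for the nondegenerate case $m_4^2 := \V[(X_i-\mu)^2] > 0$ is essentially the paper's: both arguments show $\frac{1}{\sqrt n}\sum_i \lambda_{i,\alpha_{2,n}}$ converges to a positive constant and that $\sum_i \tilde R_{i,\alpha_{1,n}}^2$ (or the $\lambda$-weighted version) grows at most polylogarithmically. Your pointwise estimate $\tilde R_{i,\alpha_{1,n}}^2 \lesssim (\log\log n)^2 \log i/i$ is a direct route to what the paper packages as Proposition~\ref{proposition:nasty_sum_scales_logarithmically}. One small slip: in your final display the small-$i$ term $c_1 i_0 C(\omega)(\log\log n)^2$ is \emph{not} $\bigO(n^{-1/2}(\log n)^2(\log\log n)^2)$ as claimed; you need the sharper bound $\lambda_{i,\alpha_{2,n}} = \bigO(1/\sqrt n)$ for fixed $i$ (which holds since $c_2>0$ keeps $\widehat m_{4,i}^2$ bounded away from zero) rather than the crude $\lambda_{i,\alpha_{2,n}} \leq c_1$.

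The genuine gap is that you silently assume both $m_4^2 > 0$ and $\sigma^2 > 0$. Neither is guaranteed by Assumptions~\ref{ass:main_assumption} and~\ref{assumption:optimal_widths}. When $m_4^2 = 0$, the limit $\widehat m_{4,i}^2 \to 0$ invalidates your Ces\`aro argument for the denominator (you no longer have $\sum_i \lambda_{i,\alpha_{2,n}} \sim \sqrt n \cdot \text{const}$, since the plug-ins may saturate at $c_1$); the paper handles this via a separate rate analysis showing $\widehat m_{4,t}^2 = \tilde\bigO(1/t)$ and then $\big(\frac{1}{\sqrt n}\sum_i \lambda_{i,\alpha_{2,n}}\big)^{-1} = \tilde\bigO(1/\sqrt n)$. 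Similarly, when $\sigma^2 = 0$ your claim that the truncation in $\tilde\lambda_i$ is eventually inactive breaks down (since $\widehat\sigma_i^2 \to 0$), so your asymptotics $\sum_j \tilde\lambda_j = \Theta(\sqrt{i/\log i})$ and $\sum_j \psi_P(\tilde\lambda_j) = \bigO(\log\log i)$ need a separate justification; the paper's Proposition~\ref{proposition:nasty_sum_scales_logarithmically} treats $\sigma=0$ and $\sigma>0$ separately for this reason. You should add these degenerate-case analyses to complete the proof.
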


Taking $\delta_n = \alpha$, it immediately follows from Theorem~\ref{theorem:convergence_main_term} that the upper confidence interval's first order term is asymptotically almost surely equal to that of the oracle Bernstein confidence interval. To derive the analogous conclusion for the lower confidence interval, it suffices to take
\begin{align} \label{eq:final_def_alphas}
    \alpha_{1, n} = \frac{1}{\log n}\alpha, \quad \delta_n = \alpha_{2, n} =  \frac{\log(n) - 1}{\log(n)} \alpha
\end{align}
in Theorem~\ref{theorem:lower_order_term} and Theorem~\ref{theorem:convergence_main_term}, respectively.  These claims are formalized in the following corollary.

\begin{corollary} [Sharpness] \label{corollary:sharpness}
Let the predictable sequences $(\widehat\mu_i)_{i \in [n]}$ and $(\widehat\sigma_i^2)_{i \in [n]}$ be defined as in Section~\ref{section:upper_bound} or Section~\ref{section:lower_bound}, with $c_2 \wedge c_3 > 0$. Let Assumption \ref{ass:main_assumption} and Assumption \ref{assumption:optimal_widths} hold. If $\alpha = \alpha_{1, n} + \alpha_{2, n}$ as defined in \eqref{eq:final_def_alphas}, then
\begin{align*}
    &\sqrt{n} (U_n - D_n) \stackrel{a.s.}{\to} \sqrt{2 \V \lb (X_i-\mu)^2\rb \log (1 / \alpha)},
    \\&\sqrt{n} (D_n - L_n) \stackrel{a.s.}{\to} \sqrt{2 \V \lb (X_i-\mu)^2\rb \log (1 / \alpha)}.
\end{align*}
\end{corollary}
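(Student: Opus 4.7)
The plan is to reduce each of the two widths to combinations of quantities whose limits have already been pinned down by Theorem~\ref{theorem:convergence_main_term} and Theorem~\ref{theorem:lower_order_term}, so the work consists almost entirely of algebra on the closed-form expressions. For the upper interval, Corollary~\ref{corollary:upper_bound} yields the exact identity $U_n - D_n = R_{n,\alpha}$, and applying Theorem~\ref{theorem:convergence_main_term} with the constant (hence weakly increasing) sequence $\delta_n \equiv \alpha$ gives the claim immediately.

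For the lower interval, I would start from the definition \eqref{eq:lower_bound_process} of $L_n$ as the positive root of $A_n x^2 + B_{n,\alpha_{1,n}} x = D_n - C_{n,\alpha_{1,n}} - R_{n,\alpha_{2,n}}$ and rearrange to obtain
\begin{equation*}
D_n - L_n \;=\; p_n(L_n) + R_{n,\alpha_{2,n}}, \qquad p_n(x) \defined A_n x^2 + (B_{n,\alpha_{1,n}}-1)x + C_{n,\alpha_{1,n}}.
\end{equation*}
A direct expansion of $\tilde R_{i,\alpha_{1,n}}^2 = \tilde C_{i,\alpha_{1,n}} + \sigma^2 \tilde B_{i,\alpha_{1,n}} + \sigma^4 \tilde A_i$ and averaging against $\lambda_i$ reveals the key identity $p_n(\sigma^2) = \sum_{i\leq n}\lambda_i \tilde R_{i,\alpha_{1,n}}^2 \big/ \sum_{i\leq n}\lambda_i$, so Theorem~\ref{theorem:lower_order_term} delivers $\sqrt n\, p_n(\sigma^2) \to 0$ almost surely. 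Because the three coefficients of $p_n$ are each nonnegative, in the generic case $\sigma^2 > 0$ the vanishing of their $(\sigma^4, \sigma^2, 1)$-weighted sum forces $\sqrt n A_n$, $\sqrt n (B_{n,\alpha_{1,n}}-1)$, and $\sqrt n C_{n,\alpha_{1,n}}$ to tend to zero individually (the degenerate case $\sigma^2 = 0$ is trivial, as both sides of the claim are identically zero).

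With $\alpha_{2,n} = \tfrac{\log n - 1}{\log n}\alpha \nearrow \alpha$, Theorem~\ref{theorem:convergence_main_term} supplies $\sqrt n R_{n,\alpha_{2,n}} \to \sqrt{2\V[(X-\mu)^2]\log(1/\alpha)}$, so only $\sqrt n\, p_n(L_n) \to 0$ remains. This last step is the main obstacle: $L_n$ is implicitly defined through a quadratic whose leading coefficient $A_n$ shrinks to zero, so naive worst-case bounds blow up. The cleanest route is to first establish almost sure boundedness of $L_n$ via $L_n \to \sigma^2$. To that end, I would invoke a weighted martingale strong law on $\sum_i \lambda_i[(X_i - \widehat\mu_i)^2 - \tilde\sigma_i^2]$ (bounded increments, predictable weights of order $n$, with $\widehat\mu_i \to \mu$ from the anytime-valid Bennett confidence sequence) to conclude $D_n \to \sigma^2$; combined with $A_n \to 0$, $B_{n,\alpha_{1,n}} \to 1$, and $C_{n,\alpha_{1,n}}, R_{n,\alpha_{2,n}} \to 0$, continuity of the positive root as a function of its coefficients yields $L_n \to \sigma^2$. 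Then $L_n \leq \sigma^2 + 1$ eventually, so monotone upper bounding gives $\sqrt n\, p_n(L_n) \leq (\sigma^2+1)^2 \sqrt n A_n + (\sigma^2+1)\sqrt n(B_{n,\alpha_{1,n}}-1) + \sqrt n C_{n,\alpha_{1,n}} \to 0$, completing the lower bound.
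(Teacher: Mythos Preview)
Your upper-interval argument is exactly the paper's: $U_n - D_n = R_{n,\alpha}$, then Theorem~\ref{theorem:convergence_main_term} with the constant sequence $\delta_n \equiv \alpha$.

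For the lower interval you actually go further than the paper does. The paper's ``proof'' is just the sentence before the corollary: it analyzes the surrogate width $\frac{\sum_i \lambda_i \tilde R_{i,\alpha_{1,n}}^2}{\sum_i \lambda_i} + R_{n,\alpha_{2,n}}$, which in your notation is $p_n(\sigma^2) + R_{n,\alpha_{2,n}}$, and simply declares that this ``corresponds to'' the first-order width below $D_n$. Your identity $D_n - L_n = p_n(L_n) + R_{n,\alpha_{2,n}}$ together with the observation that nonnegativity of the coefficients lets $\sqrt n\,p_n(\sigma^2)\to 0$ force $\sqrt n A_n,\ \sqrt n(B_{n,\alpha_{1,n}}-1),\ \sqrt n C_{n,\alpha_{1,n}}\to 0$ individually (for $\sigma^2>0$) is exactly what is needed to bridge from the paper's surrogate to the actual $D_n - L_n$, so your argument is in fact more complete.

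Two small points. First, the dismissal of $\sigma^2 = 0$ as ``identically zero'' is not right: $\widehat\mu_1 = \tfrac12$ contributes a nonzero term to $D_n$, and $L_n$ is not zero either, so $\sqrt n(D_n - L_n)\to 0$ still needs a (short) argument; moreover, your coefficient-separation trick fails at $\sigma^2 = 0$ since then $p_n(0)=C_{n,\alpha_{1,n}}$ carries no information about $A_n$ or $B_{n,\alpha_{1,n}}$. Second, you can skip the weighted martingale SLLN and the detour through $D_n\to\sigma^2$ entirely: rationalizing \eqref{eq:lower_bound_process} gives
\[
L_n \;=\; \frac{2\,(D_n - C_{n,\alpha_{1,n}} - R_{n,\alpha_{2,n}})}{B_{n,\alpha_{1,n}} + \sqrt{B_{n,\alpha_{1,n}}^2 + 4A_n(D_n - C_{n,\alpha_{1,n}} - R_{n,\alpha_{2,n}})}},
\]
and since $B_{n,\alpha_{1,n}}\geq 1$ and $D_n\in[0,1]$ this is bounded in absolute value by $2(1 + C_{n,\alpha_{1,n}} + R_{n,\alpha_{2,n}})$, which is eventually at most~$4$. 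That already suffices for $\sqrt n\,p_n(L_n)\to 0$.
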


% \subsubsection*{An analysis of the widths for the standard deviation} \label{section:analysis_widths_std}

% We devote this section to showing that the confidence intervals for the std also achieve $1/\sqrt{n}$ rates. To see this, let $(L_n, U_n)$ be the confidence interval for the variance obtained in Section~\ref{section:main_results}. We just showed that, for big $n$,
% \begin{align*}
%     L_n \approx \sigma^2 - 2\sqrt{\frac{c \V \lb (X_i-\mu)^2\rb}{n}}, \quad U_n \approx \sigma^2 + 2\sqrt{\frac{c \V \lb (X_i-\mu)^2\rb}{n}}
% \end{align*}
% with $c = \frac{\log (1 / \alpha)}{2}$. At first glance, it could seem like $\sqrt{L_n}$ and $\sqrt{U_n}$ approach $\sigma$ at $n^{-1/4}$ rates, instead of $n^{-1/2}$ rates. Nonetheless, if $B \leq 1$, let us note that 
% \begin{align*}
%     \V \lb (X_i-\mu)^2\rb \leq \E\lb (X_i-\mu)^2\rb - \E^2\lb (X_i-\mu)^2\rb = \sigma^2(1 - \sigma^2) \leq \sigma^2.
% \end{align*}
% Thus,
% \begin{align*}
%    \sqrt U_n \lessapprox \sqrt{\sigma^2 + 2\sigma\sqrt{\frac{c}{n}}} = \sqrt{\lp\sigma + \sqrt{\frac{c}{n}}\rp^2 - {\frac{c}{n}}} \leq \sigma + \sqrt{\frac{c}{n}},
% \end{align*}
% and so $\sqrt U_n$ approaches $\sigma$ at a $n^{-1/2}$ rate. Analogously, $\sqrt L_n$ also approaches $\sigma$ at the same rate.

\subsubsection*{Comparison to Existing Inequalities} \label{section:comparison}

We compare our proposed inequalities to those in \citet{audibert2009exploration} and \citet{maurer2009empirical}. As mentioned in Section~\ref{section:related_work}, upper and lower inequalities for the variance were previously established in the work of \citet[Theorem 1]{audibert2009exploration} and \citet[Theorem 10]{maurer2009empirical}. Both these inequalities make use of $\mathbb{V} [ (X_i-\mu)^2 ] \leq \sigma^2$, directly or indirectly. While \citet[Theorem 1]{audibert2009exploration} makes direct use of it by leveraging Bernstein-type inequalities, \citet[Theorem 10]{maurer2009empirical} makes indirectly use of it through self-bounding arguments. \citet[Theorem 10]{maurer2009empirical}, which is the sharper of the two, yields the  $(1-\alpha)$-confidence interval
$$\sigma \in \left(\sqrt{V_n} \pm \sqrt{\frac{2 \log (2 / \alpha)}{n-1}} \right),$$
where $V_n$ is the empirical variance. That is,
\begin{align*}
    \sigma^2 \in \left(V_n +\frac{2 \log (2 / \alpha)}{n-1} \pm 2\sqrt{V_n\frac{2 \log (2 / \alpha)}{n-1}} \right).
\end{align*}
In view of $V_n \to \sigma^2$ almost surely, we observe that the radii of these two-sided confidence intervals scaled by $\sqrt n$ are roughly $$2\sigma\sqrt{2 \log (2 / \alpha)},$$
which is larger the limiting first order width of our confidence intervals (Corollary~\ref{corollary:sharpness}) in view of $1 < 2$ and
\begin{align*}
    \V \lb (X_i-\mu)^2\rb &\leq \E\lb (X_i-\mu)^2\rb - \E^2\lb (X_i-\mu)^2\rb
    \\&= \sigma^2(1 - \sigma^2) \leq \sigma^2.
\end{align*}

% If $B \leq 1$, which is usually assumed without loss of generality, we have shown in Section~\ref{section:ci} that our two-sided inequality radii scale at most as 
% \begin{align*}
%     \sqrt{\frac{c}{n}} = \sigma\sqrt{\frac{\log(2/\alpha)}{2n}},
% \end{align*}
% where we have also used the loose bound $\mathbb{V} \lb (X_i-\mu)^2\rb \leq \sigma^2$. Thus, our inequalities are sharper. 
% This all follows from the radii of our two-sided inequalities for $\sigma^2$ scaling as
% $$\sqrt{\mathbb{V} \lb (X_i-\mu)^2\rb } \sqrt{2 \log (2 / \alpha)/n},$$
% as proven in Corollary~\ref{corollary:sharpness}.

This theoretical edge also manifests in practice, with the empirical widths of our inequalities proving considerably smaller than those from \citet{maurer2009empirical}, as illustrated in Section~\ref{section:experiments}. Furthermore, it is unclear how to derive (anytime-valid) inequalities under martingale dependence using the tools from \citet[Theorem 10]{maurer2009empirical} (i.e., self-bounding  arguments), but we are able to do this with our proof techniques.

% The recent contribution \citet[Theorem 12]{voravcek2025star} further establishes concentration inequalities for the second moment centered at any arbitrary point $m$.  However, that inequality is looser than \citet[Theorem 10]{maurer2009empirical} when we substitute $m=\mu$, which in turn we improve on as explained above.

\begin{figure*}[h!] 
    \center \includegraphics[width=\textwidth]{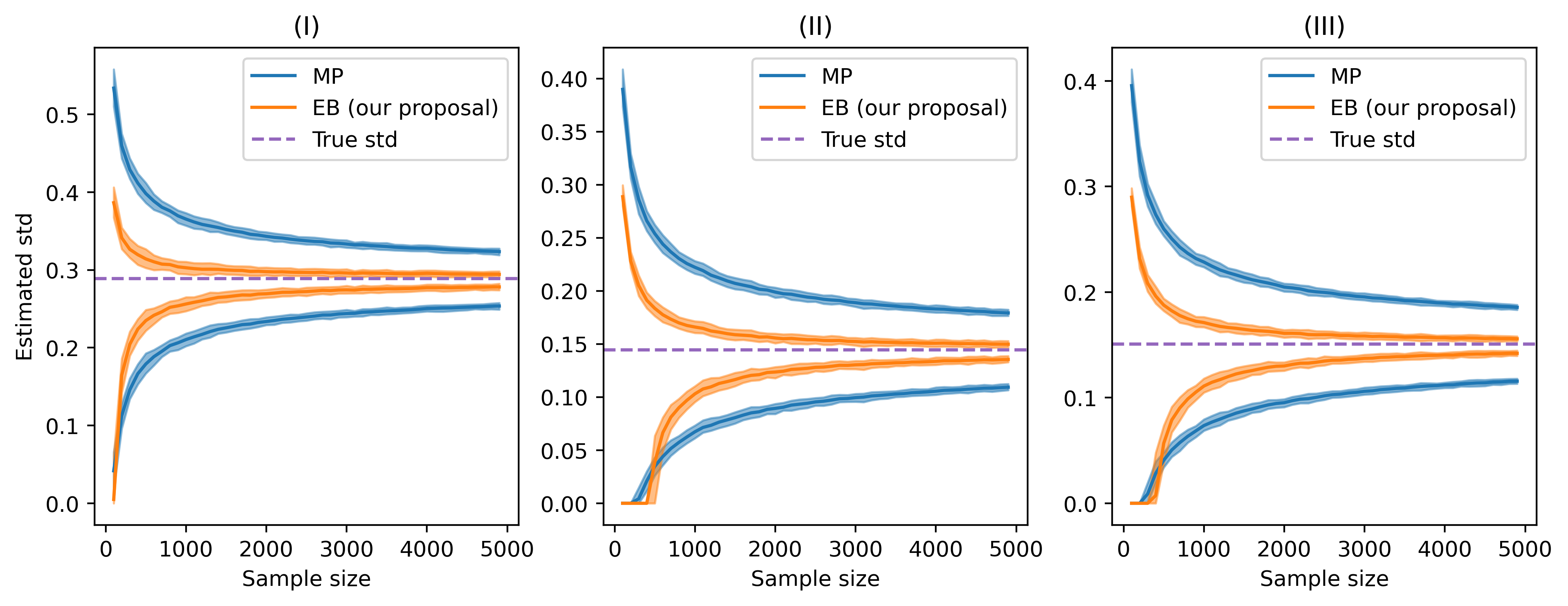}
  \caption{Average confidence intervals over $100$ simulations for the std $\sigma$ for (I) the uniform distribution in $(0,1)$, (II) the beta distribution with parameters $(2, 6)$, and (III) the beta distribution with parameters $(5, 5)$. For each of the inequalities, the $0.95\%$-empirical quantiles are also displayed. The Maurer Pontil (MP) inequality \citep[Theorem 10]{maurer2009empirical} is compared against our proposal (EB). We highlight the improved empirical performance of our methods in all scenarios.} 
  \label{fig:eb_vs_mp}
\end{figure*}

\begin{figure*}[h!] 
    \center \includegraphics[width=\textwidth]{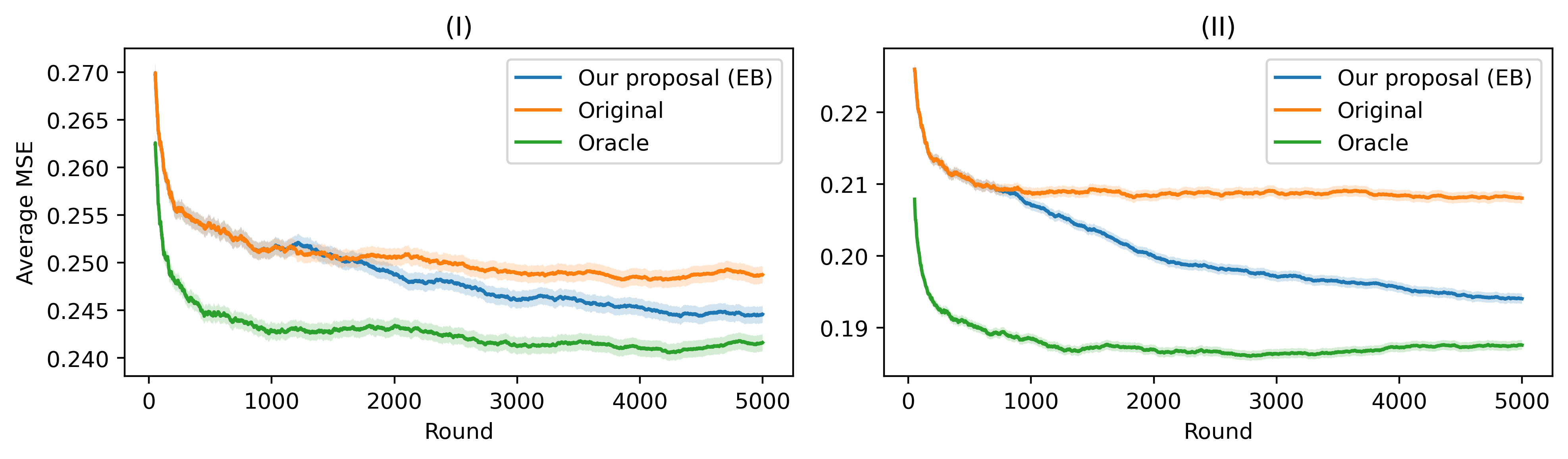}
  \caption{Mean squared error of the average treatment effect (ATE) sequential estimations, averaged over $1.5e5$ simulations. The placebo and treated potential outcomes are respectively distributed as (I) a $(0,0.7)$-uniform distribution and a $(0,1)$-uniform distribution, and (II) a $(0,1)$-uniform distribution and a $(2, 6)$-beta distribution. The $0.95\%$-empirical quantiles are also displayed. The original algorithm is compared to a variation that replaces their variance confidence sequences with those from Corollary~\ref{corollary:upper_bound}, and an oracle algorithm that has access to the variances is also displayed.} 
  \label{fig:ojash}
\end{figure*}

\section{Experiments} \label{section:experiments}

We devote this section to exploring the empirical performance of both the upper and lower confidence intervals presented in this paper. Note that, in order to obtain confidence intervals for the standard deviation using our approach, it suffices to take the square root of the upper and lower confidence intervals for the variance presented in Section~\ref{section:main_results}.  In all the experiments, we take $\alpha = 0.05$, and the constants $c_1 = \frac{1}{2}$, $c_2 = \frac{1}{2^4}$, $c_3 = \frac{1}{2^2}$, $c_4 = \frac{1}{2}$, $c_5 = 2$.  The code can be found at \href{https://github.com/DMartinezT/emp_bernstein_variance}{https://github.com/DMartinezT/emp\_bernstein\_variance}.  % \href{https://github.com/DMartinezT/empirical_bernstein_variance}{here} The code may be found in the supplementary material. 

\textbf{Batch setting.} We compare our results with 
the inequalities from \citet[Theorem 10]{maurer2009empirical}, which currently constitute the state of the art for the standard deviation, for fixed sample sizes (the more classical batch setting). Figure~\ref{fig:eb_vs_mp} displays the average upper and lower confidence intervals for the standard deviation for different samples sizes for three different bounded distributions. Our inequalities consistently demonstrate improved empirical performance in all evaluated scenarios.  We defer a comparison with other alternatives, such as a double empirical Bernstein inequality on the first and second moment, to Appendix~\ref{section:alternative_approaches}.

\textbf{Sequential setting.} We explore a direct algorithmic application of our inequalities in the context of optimal adaptive Neyman allocation for randomized control trials (gold standard in causal inference and off-policy evaluation), following the methodology established in \citet{neopane2025optimistic}. In \citet{neopane2025optimistic}, the probability of treatment assignment at a given time is built given the confidence sequences for the standard deviations of the treatment and placebo outcomes. In particular, the treatment policy is sequentially updated, and it requires anytime-valid inference on the variance to prove its optimality. We conduct experiments using the confidence sequences used in \citet{neopane2025optimistic}, %(which are sequential extensions of the results presented in \citet{audibert2006use}),
as well as replacing them with our proposed inequalities, for different potential outcome distributions. We illustrate the results in Figure~\ref{fig:ojash}. Note that our inequalities lead to a substantial improved performance of the adaptive Neyman allocation procedure. Especially in later rounds, our inequalities can lead to performances that are closer to those from an oracle algorithm (that knows the variance) than to those from the original algorithm.

\section{Conclusion} \label{section:conclusion}

We have provided novel concentration inequalities for the variance of bounded random variables under mild assumptions, instantiating them for both the batch and sequential settings. We have shown their theoretical sharpness, asymptotically matching the  first order term of the oracle Bernstein's inequality. Furthermore, our empirical findings demonstrate that they significantly outperform the widely adopted inequalities presented by \citet[Theorem 10]{maurer2009empirical}.

There are several possible avenues for future work. In  Appendix~\ref{section:extension_hs}, we show how the results naturally extend to Hilbert spaces. The proof of Theorem~\ref{theorem:main_supermartingale_theorem_HS} implicitly exploits the inner product structure of the Hilbert space, which cannot be done in arbitrary Banach spaces. While this challenge may be circumvented by means of the triangle inequality, that approach leads to inflated constants; exploring tighter alternatives for general or smooth Banach spaces would be of interest. Furthermore,  the analysis in Section~\ref{section:extension_hs} exploits `one-dimensional variances'. Extending our work to  covariance matrices or operators would also be a natural direction to follow. %Section~\ref{section:hs} and replaced Section~\ref{section:hs} by extension_hs

% Acknowledgements should only appear in the accepted version.
\section*{Acknowledgements}

DMT thanks Ben Chugg, Ojash Neopane, and Tomás González for insightful conversations. DMT gratefully
acknowledges that the project that gave rise to these results received the support of a fellowship
from ‘la Caixa’ Foundation (ID 100010434). The fellowship code is LCF/BQ/EU22/11930075. 
AR was funded by NSF grant DMS-2310718.

\section*{Impact Statement}

This paper presents work whose goal is to advance the field of Machine
Learning. There are many potential societal consequences of our work, none
which we feel must be specifically highlighted here.

\bibliography{bib}

@article{bennett1962probability,
  title={Probability Inequalities for the Sum of Independent Random Variables},
  author={Bennett, George},
  journal={Journal of the American Statistical Association},
  volume={57},
  number={297},
  pages={33--45},
  year={1962},
  publisher={Taylor \& Francis}
}

@book{bernstein_theory_1927,
	title = {Theory of Probability},
	publisher = {Gastehizdat Publishing House},
	author = {Bernstein, Sergei},
	year = {1927}
}

@inproceedings{maurer2009empirical,
  title = {Empirical {B}ernstein Bounds and Sample-Variance Penalization},
  author = {Andreas Maurer and Massimiliano Pontil},
  booktitle = {Conference on Learning Theory},
    year = {2009},
 organization={PMLR}
}

@article{howard2020time,
  title={Time-uniform {C}hernoff Bounds via Nonnegative Supermartingales},
  author={Howard, Steven R and Ramdas, Aaditya and McAuliffe, Jon and Sekhon, Jasjeet},
  journal={Probability Surveys},
  volume={17},
  pages={257--317},
  year={2020},
  publisher={Institute of Mathematical Statistics and Bernoulli Society}
}

@article{howard2021time,
  title={Time-Uniform, Nonparametric, Nonasymptotic Confidence Sequences},
  author={Howard, Steven R and Ramdas, Aaditya and McAuliffe, Jon and Sekhon, Jasjeet},
  journal={The Annals of Statistics},
  volume={49},
  number={2},
  pages={1055--1080},
  year={2021},
  publisher={Institute of Mathematical Statistics}
}

@article{maurer2006concentration,
  title={Concentration Inequalities for Functions of Independent Variables},
  author={Maurer, Andreas},
  journal={Random Structures \& Algorithms},
  volume={29},
  number={2},
  pages={121--138},
  year={2006},
  publisher={Wiley Online Library}
}

@article{waudby2024estimating,
  title={Estimating Means of Bounded Random Variables by Betting},
  author={Waudby-Smith, Ian and Ramdas, Aaditya},
  journal={Journal of the Royal Statistical Society Series B: Statistical Methodology},
  volume={86},
  number={1},
  pages={1--27},
  year={2024},
  publisher={Oxford University Press US}
}

@article{fan_exponential_2015,
	title = {Exponential Inequalities for Martingales with Applications},
	volume = {20},
	language = {EN},
	number = {1},
	urldate = {2017-08-11},
	journal = {Electronic Journal of Probability},
	author = {Fan, Xiequan and Grama, Ion and Liu, Quansheng},
	year = {2015},
	mrnumber = {MR3311214},
	zmnumber = {1320.60058},
	pages = {1--22}
}

@book{ville1939etude,
  title={Etude Critique de la Notion de Collectif},
  author={Ville, Jean},
  year={1939},
  publisher={Gauthier-Villars Paris}
}

@article{martinez2024empirical,
  title={Empirical {Bernstein} in smooth {Banach} spaces},
  author={Martinez-Taboada, Diego and Ramdas, Aaditya},
  journal={Annals of Applied Probability},
  year={2026}
}

@article{audibert2009exploration,
  title={Exploration--Exploitation Tradeoff using Variance Estimates in Multi-Armed Bandits},
  author={Audibert, Jean-Yves and Munos, R{\'e}mi and Szepesv{\'a}ri, Csaba},
  journal={Theoretical Computer Science},
  volume={410},
  number={19},
  pages={1876--1902},
  year={2009},
  publisher={Elsevier}
}

@article{wang2024sharp,
  title={Sharp Matrix Empirical {B}ernstein Inequalities},
  author={Wang, Hongjian and Ramdas, Aaditya},
  journal={Nerual Information Processing Systems},
  year={2025}
}

@article{austern2022efficient,
  title={Efficient Concentration with {G}aussian Approximation},
  author={Austern, Morgane and Mackey, Lester},
  journal={arXiv preprint arXiv:2208.09922},
  year={2022}
}

@article{pinelis1994optimum,
  title={Optimum Bounds for the Distributions of Martingales in {B}anach Spaces},
  author={Pinelis, Iosif},
  journal={The Annals of Probability},
  pages={1679--1706},
  year={1994},
  publisher={JSTOR}
}

@inproceedings{mnih2008empirical,
  title={Empirical {B}ernstein Stopping},
  author={Mnih, Volodymyr and Szepesv{\'a}ri, Csaba and Audibert, Jean-Yves},
  booktitle={Proceedings of the 25th international conference on Machine learning},
  pages={672--679},
  year={2008}
}

@inproceedings{thomas2015high,
  title={High-Confidence Off-Policy Evaluation},
  author={Thomas, Philip and Theocharous, Georgios and Ghavamzadeh, Mohammad},
  booktitle={Proceedings of the AAAI Conference on Artificial Intelligence},
  volume={29},
  year={2015}
}

@article{stout1970martingale,
  title={A Martingale Analogue of {K}olmogorov's Law of the Iterated Logarithm},
  author={Stout, William F},
  journal={Zeitschrift f{\"u}r Wahrscheinlichkeitstheorie und verwandte Gebiete},
  volume={15},
  number={4},
  pages={279--290},
  year={1970},
  publisher={Springer}
}

@inproceedings{neopane2025optimistic,
  title={Optimistic Algorithms for Adaptive Estimation of the Average Treatment Effect},
  author={Neopane, Ojash and Ramdas, Aaditya and Singh, Aarti},
  booktitle={International Conference on Machine Learning},
  year={2025},
  organization={PMLR}
}

@inproceedings{huang2022off,
  title={Off-Policy Risk Assessment for Markov Decision Processes},
  author={Huang, Audrey and Leqi, Liu and Lipton, Zachary and Azizzadenesheli, Kamyar},
  booktitle={International Conference on Artificial Intelligence and Statistics},
  pages={5022--5050},
  year={2022},
  organization={PMLR}
}

@book{hall2014martingale,
  title={Martingale Limit Theory and its Application},
  author={Hall, Peter and Heyde, Christopher C},
  year={2014},
  publisher={Academic press}
}

@article{voravcek2025star,
  title={STaR-Bets: Sequential Target-Recalculating Bets for Tighter Confidence Intervals},
  author={Vor{\'a}{\v{c}}ek, V{\'a}clav and Orabona, Francesco},
  journal={arXiv preprint arXiv:2505.22422},
  year={2025}
}

@article{orabona2023tight,
  title={Tight concentrations and confidence sequences from the regret of universal portfolio},
  author={Orabona, Francesco and Jun, Kwang-Sung},
  journal={IEEE Transactions on Information Theory},
  volume={70},
  number={1},
  pages={436--455},
  year={2023},
  publisher={IEEE}
}

@inproceedings{jang2023tighter,
  title={Tighter PAC-Bayes bounds through coin-betting},
  author={Jang, Kyoungseok and Jun, Kwang-Sung and Kuzborskij, Ilja and Orabona, Francesco},
  booktitle={The Thirty Sixth Annual Conference on Learning Theory},
  pages={2240--2264},
  year={2023},
  organization={PMLR}
}

@inproceedings{lee2022optimal,
  title={Optimal Sub-Gaussian Mean Estimation in R},
  author={Lee, Jasper CH and Valiant, Paul},
  booktitle={2021 IEEE 62nd Annual Symposium on Foundations of Computer Science (FOCS)},
  pages={672--683},
  year={2022},
  organization={IEEE}
}

@article{tolstikhin2013pac,
  title={PAC-Bayes-empirical-Bernstein inequality},
  author={Tolstikhin, Ilya O and Seldin, Yevgeny},
  journal={Advances in Neural Information Processing Systems},
  volume={26},
  year={2013}
}

@article{kazerouni2017conservative,
  title={Conservative contextual linear bandits},
  author={Kazerouni, Abbas and Ghavamzadeh, Mohammad and Abbasi Yadkori, Yasin and Van Roy, Benjamin},
  journal={Advances in neural information processing systems},
  volume={30},
  year={2017}
}
\bibliographystyle{icml2026}

%%%%%%%%%%%%%%%%%%%%%%%%%%%%%%%%%%%%%%%%%%%%%%%%%%%%%%%%%%%%%%%%%%%%%%%%%%%%%%%
%%%%%%%%%%%%%%%%%%%%%%%%%%%%%%%%%%%%%%%%%%%%%%%%%%%%%%%%%%%%%%%%%%%%%%%%%%%%%%%
% APPENDIX
%%%%%%%%%%%%%%%%%%%%%%%%%%%%%%%%%%%%%%%%%%%%%%%%%%%%%%%%%%%%%%%%%%%%%%%%%%%%%%%
%%%%%%%%%%%%%%%%%%%%%%%%%%%%%%%%%%%%%%%%%%%%%%%%%%%%%%%%%%%%%%%%%%%%%%%%%%%%%%%
\newpage
\appendix
\onecolumn

\section*{Appendix outline}

We organize the appendices as follows. We devote Appendix~\ref{section:extension_hs} to the formalization of the extension of the results to Hilbert spaces.  Section~\ref{section:auxiliary_lemmas} presents auxiliary lemmata that are exploited in the remaining proofs. These are mostly analytic or simple probabilistic results that can be skipped on a first pass. Section~\ref{section:auxiliary_propositions} contains more involved technical propositions that are later combined to yield the proofs of Theorem~\ref{theorem:convergence_main_term} and Theorem~\ref{theorem:lower_order_term}. The proofs of such propositions are deferred to Appendix~\ref{appendix:auxiliary_propositions_proofs}.
Appendix~\ref{appendix:main_proofs} displays the proofs of the theoretical results exhibited in the main body of the paper, and Appendix~\ref{section:alternative_approaches} exhibits potential alternative approaches to the proposed empirical Bernstein inequality, illustrating the empirical benefits of the latter. %Lastly, Appendix~\ref{section:comparison} presents a comprehensive comparison with previous concentration inequalities.

%In Appendix~\ref{section:analysis_widths_std}, we prove via a short technical argument that the widths of the confidence intervals for the standard deviation also achieve $1/\sqrt{n}$ rates.

Throughout, we denote the probability space on which the random variables are defined by $\lp \Omega, \mathcal{F}, P \rp$. Furthermore, we use the standard asymptotic big-oh notations. Given functions $f$ and $g$, we write $f(n) = \bigO(g(n))$ if there exist constants $C, n_0 > 0$ such that $|f(n)| \leq C |g(n)|$ for all $n \geq n_0$. We write $f(n) = \widetilde{\bigO}(g(n))$ if $f(n) = \bigO(g(n) \, \mathrm{polylog}(n))$, where $\mathrm{polylog}(n)$ denotes a polylogarithmic factor in $n$. Finally, we use $f(n) = \Omega(g(n))$ to denote that there exist constants $c, n_0 > 0$ such that $f(n) \geq c g(n)$ for all $n \geq n_0$.

\section{Extension to Hilbert spaces} \label{section:extension_hs}

%\subsection{An extension to Hilbert spaces} \label{section:hs}

Our inequalities naturally extend to separable Hilbert spaces. In these more abstract spaces, we shall assume that our random variables lie in a ball of diameter $1$, instead of a unit long interval. Similarly, the concept of variance involves norms, instead of just squares of scalars. 
\begin{assumption} \label{ass:hs_assumption}
    The stream of random variables $X_1, X_2, \ldots$ belongs to a separable Hilbert space $H$, and is such that
    \begin{align*}
        \| X_t \| \in \lb 0,\frac{1}{2} \rb, \quad \E_{t-1} X_t = \mu, \quad \E_{t-1} \| X_t - \mu \|^2 = \sigma^2.
    \end{align*}
\end{assumption}
Under Assumption~\ref{ass:hs_assumption}, all the concentration inequalities for $\sigma^2$ previously presented for the one dimensional case still hold if replacing  $(X_i - \bar\mu_i)^2$ and $(X_i - \widehat\mu_i)^2$ by $\|X_i - \bar\mu_i\|^2$ and $\|X_i - \widehat\mu_i\|^2$, respectively.

The main technical obstacle of this extension is the generalization of Theorem~\ref{theorem:bennett_anytimevalid} to multivariate settings, which we formalize next. Contrary to its one-dimensional counterpart, its proof builds on more sophisticated techniques from \citet{pinelis1994optimum}; we defer such a proof to
Appendix~\ref{proof:vectorvalued_bennett_anytimevalid}. %Its proof is deferred to Appendix~\ref{proof:vectorvalued_bennett_anytimevalid}, and relies on techniques from \citet{pinelis1994optimum}.

\begin{theorem} [Vector-valued anytime valid Bennett's inequality] \label{theorem:vectorvalued_bennett_anytimevalid}
    Let $X_1, X_2, \ldots$ be a stream of random variables belonging to a separable Hilbert space $H$ such that $\| X_t \| \leq \frac{1}{2}$, $\E_{t-1}X_t = \mu$, and $\E_{t-1} \| X_t-\mu\|^2 = \sigma^2$, for all $t \geq 1$. For any $\R_+$-valued predictable sequence $(\tilde\lambda_i)_{i \geq 1}$, the sequence of sets of $x$ such that
     \begin{align*}
         \ldba x - \frac{\sum_{i \leq t} \tilde \lambda_i X_i}{\sum_{i \leq t} \tilde \lambda_i}\rdba  \leq \frac{\log(2/\delta) + \sigma^2\sum_{i \leq t} \psi_P(\tilde \lambda_i)}{\sum_{i \leq t} \tilde \lambda_i} 
     \end{align*}
     is a $1-\delta$ confidence sequence for $\mu$.
\end{theorem}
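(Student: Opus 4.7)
The plan is to mirror the scalar proof of Theorem~\ref{theorem:bennett_anytimevalid} --- construct a nonnegative supermartingale and apply Ville's inequality --- while replacing the scalar exponential supermartingale with a $\cosh(\ldba\cdot\rdba)$-based process whose supermartingale property follows from Pinelis's Hilbert-space exponential inequality. First I would reduce to a weighted sum: set $Y_i \defined \tilde\lambda_i(X_i - \mu)$ and $S_t \defined \sum_{i \leq t} Y_i$, with $S_0 \defined 0$. Since $\ldba X_t\rdba \leq \tfrac{1}{2}$ a.s., conditional Jensen yields $\ldba \mu\rdba \leq \tfrac{1}{2}$, so $\ldba Y_i\rdba \leq \tilde\lambda_i$ a.s. and $(Y_i)_{i\geq 1}$ is an $H$-valued martingale difference with $\E_{i-1}\ldba Y_i\rdba^2 = \tilde\lambda_i^2 \sigma^2$. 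Since the weighted mean minus $\mu$ equals $S_t/\sum_{i\leq t}\tilde\lambda_i$, the claim is equivalent to
\begin{align*}
    \Pb\lp\exists t \geq 1 :\ \ldba S_t\rdba > \log(2/\delta) + \sigma^2\sum_{i \leq t}\psi_P(\tilde\lambda_i)\rp \leq \delta.
\end{align*}

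The core step is to show that
\begin{align*}
    M_t \defined \cosh(\ldba S_t\rdba) \cdot \exp\lp -\sigma^2\sum_{i \leq t}\psi_P(\tilde\lambda_i)\rp, \qquad M_0 = 1,
\end{align*}
is a nonnegative $(\F_t)$-supermartingale. The one-step estimate to establish is the Hilbert-space analogue of the scalar Bennett MGF bound,
\begin{align*}
    \E_{t-1}\cosh(\ldba S_t\rdba) \leq \cosh(\ldba S_{t-1}\rdba)\exp(\sigma^2\psi_P(\tilde\lambda_t)).
\end{align*}
Following~\cite{pinelis1994optimum}, this is obtained by combining the Hilbert identity $\ldba S_{t-1} + Y_t\rdba^2 = \ldba S_{t-1}\rdba^2 + 2\la S_{t-1}, Y_t\ra + \ldba Y_t\rdba^2$ with a convexity/monotonicity argument that reduces the analysis to the scalar projection $\la S_{t-1}, Y_t\ra/\ldba S_{t-1}\rdba$ --- which has conditional variance at most $\tilde\lambda_t^2 \sigma^2$ and is $\tilde\lambda_t$-bounded --- and then applying the scalar Bennett MGF bound $\E e^{\pm Z} \leq \exp(v \psi_P(b)/b^2)$ for mean-zero $Z$ with $|Z|\leq b$ and $\E Z^2 = v$. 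Multiplying by the predictable factor $\exp(-\sigma^2 \sum_{i\leq t}\psi_P(\tilde\lambda_i))$ then gives the supermartingale property. Applying Theorem~\ref{theorem:villesinequality} at threshold $1/\delta$ yields $\Pb(\exists t :\ M_t \geq 1/\delta) \leq \delta$; using $\cosh(x) \geq \tfrac{1}{2} e^x$ for $x \geq 0$, the event $\{M_t \geq 1/\delta\}$ is implied by $\ldba S_t\rdba \geq \log(2/\delta) + \sigma^2\sum_{i\leq t}\psi_P(\tilde\lambda_i)$, which gives the tail bound above; dividing by $\sum_{i\leq t}\tilde\lambda_i$ recovers the stated ball.

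The main obstacle is the convexity/monotonicity step. In the scalar setting, one symmetrizes over $\pm S_t$ and applies Bennett to both $Y_t$ and $-Y_t$, but in a Hilbert space $\ldba S_t\rdba$ cannot be split so cleanly: one must argue that the worst perturbation of $\ldba S_{t-1}\rdba$ by $Y_t$ is captured by the one-dimensional projection onto $S_{t-1}/\ldba S_{t-1}\rdba$. This is precisely the 2-smoothness input from~\cite{pinelis1994optimum} that replaces the scalar Bennett computation and preserves the sharp constants $\log(2/\delta)$ and $\psi_P(\tilde\lambda_i)$ in the statement.
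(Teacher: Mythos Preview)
Your proposal is correct and follows essentially the same route as the paper: build the nonnegative supermartingale $\cosh(\ldba S_t\rdba)\exp(-\sigma^2\sum_{i\leq t}\psi_P(\tilde\lambda_i))$ via Pinelis's inequality, apply Ville at threshold $1/\delta$, and convert using $\cosh x \geq \tfrac12 e^x$. The only cosmetic difference is that the paper invokes \citet[Theorem~3.2]{pinelis1994optimum} as a black box to obtain $\E_{t-1}\cosh(\ldba S_t\rdba)\leq (1+\E_{t-1}\psi_P(\tilde\lambda_t\ldba X_t-\mu\rdba))\cosh(\ldba S_{t-1}\rdba)$ and then bounds the factor by $\exp(\sigma^2\psi_P(\tilde\lambda_t))$ via the series expansion and $\ldba X_t-\mu\rdba\leq 1$, rather than sketching the projection/smoothness argument you outline.
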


The remainder of the one-dimensional results can be extended with relative ease, and so we emphasize once again that the concentration inequalities previously introduced still hold in Hilbert spaces.  We highlight that the inequalities are also sharp in this vector-valued setting, with the analysis conducted in Section~\ref{section:analysis_widths} naturally extending to Hilbert spaces. %We defer the formal presentation of such extensions to Appendix~\ref{section:extension_hs}.

\subsection{Formalization of auxiliary results}

Throughout, let $H$ be a separable Hilbert space and denote 
\begin{align*}
    B_r(x) = \lc y\in H : \|y-r\| \leq r \rc.
\end{align*}
We remind the reader that the theoretical foundation of the results from Section~\ref{section:main_results} are namely the scalar-valued anytime valid Bennett's inequality (Theorem~\ref{theorem:bennett_anytimevalid}) and the supermartingale construction from Theorem~\ref{theorem:main_supermartingale_theorem}. Theorem~\ref{theorem:vectorvalued_bennett_anytimevalid} extended the former to the multivariate setting.
%We start by extending Theorem~\ref{theorem:bennett_anytimevalid} to separable Hilbert spaces. Contrary to its one-dimensional counterpart, its proof builds on more sophisticated techniques from \citet{pinelis1994optimum}; we defer such a proof to
% Appendix~\ref{proof:vectorvalued_bennett_anytimevalid}.
% \begin{theorem} [Vector-valued anytime valid Bennett's inequality] \label{theorem:vectorvalued_bennett_anytimevalid}
%     Let $X_1, X_2, \ldots$ be a stream of random variables belonging to a separable Hilbert space $H$ such that $\| X_t \| \leq \frac{1}{2}$, $\E_{t-1}X_t = \mu$, and $\E_{t-1} \| X_t-\mu\|^2 = \sigma^2$, for all $t \geq 1$. For any $\R_+$-valued predictable sequence $(\tilde\lambda_i)_{i \geq 1}$, the sequence of sets
%      \begin{align*}
%          \lc x \in H: \ldba x - \frac{\sum_{i \leq t} \tilde \lambda_i X_i}{\sum_{i \leq t} \tilde \lambda_i}\rdba  \leq \frac{\log(2/\delta) + \sigma^2\sum_{i \leq t} \psi_P(\tilde \lambda_i)}{\sum_{i \leq t} \tilde \lambda_i}  \rc
%      \end{align*}
%      is a $1-\delta$ confidence sequence for $\mu$.
% \end{theorem}
The remaining foundational piece is the extension of the supermartingale construction from Theorem~\ref{theorem:main_supermartingale_theorem}, which we present next\footnote{Throughout, we emphasize the fact that the estimator of the mean belongs to the Hilbert space (in contrary to the estimator of the variance, which still belongs to the real line) by means of the notation $\widehat\mu_i^{\text{HS}}$.} and whose proof we defer to 
Appendix~\ref{proof:main_supermartingale_theorem_HS}.
\begin{theorem} \label{theorem:main_supermartingale_theorem_HS}
    Let Assumption~\ref{ass:hs_assumption} hold. For any $B_{\frac{1}{2}}(0)$-valued predictable sequence $(\widehat\mu_i^{\text{HS}})_{i \geq 1}$, define
    \begin{align*}
        \tilde\sigma_i^2 = \sigma^2 + \ldba\widehat\mu_i^{\text{HS}} - \mu\rdba^2.
    \end{align*}
    For any $[0, 1]$-valued predictable sequence  $(\widehat\sigma_i)_{i \geq 1}$ and any $[0, 1)$-valued predictable sequence  $(\lambda_i)_{i \geq 1}$, the processes
    \begin{align*}
        S_t^{\pm, \text{HS}} = \exp \lc \sum_{i_\leq t} \lambda_i \lb \pm \ldba X_i - \widehat\mu_i^{\text{HS}}\rdba^2 \mp \tilde\sigma_i^2  \rb - \psi_E(\lambda_i) \lb \ldba X_i - \widehat\mu_i^{\text{HS}}\rdba^2 - \widehat \sigma_i^2 \rb^2 \rc
    \end{align*}
    for $t \geq 1$ and $S_0^{\pm, \text{HS}} = 1$,
    are nonnegative supermartingales.
\end{theorem}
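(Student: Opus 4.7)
The plan is to reduce the Hilbert-space statement to the already-proved scalar-valued Theorem~\ref{theorem:main_supermartingale_theorem} by defining the scalar random variables $Y_t := \ldba X_t - \widehat\mu_t^{\text{HS}}\rdba^2$. Once I verify two facts about $(Y_t)$, namely (i) $Y_t \in [0,1]$ and (ii) $\E_{t-1} Y_t = \tilde\sigma_t^2$, the processes $S_t^{\pm,\text{HS}}$ become syntactically identical to the $S_t^\pm$ of Theorem~\ref{theorem:main_supermartingale_theorem} with $(X_i - \widehat\mu_i)^2$ replaced by $Y_i$, so the supermartingale property transfers verbatim.

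First I would establish boundedness: since $\widehat\mu_t^{\text{HS}} \in B_{1/2}(0)$ and $\|X_t\| \leq 1/2$ by Assumption~\ref{ass:hs_assumption}, the triangle inequality gives $\ldba X_t - \widehat\mu_t^{\text{HS}}\rdba \leq 1$, hence $Y_t \in [0,1]$. This step is immediate and does not use the inner product.

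The main obstacle, and the only place where the Hilbert (as opposed to Banach) structure is genuinely used, is the conditional mean computation. Expanding via the inner product, $\ldba X_t - \widehat\mu_t^{\text{HS}}\rdba^2 = \|X_t - \mu\|^2 + 2\la X_t - \mu, \mu - \widehat\mu_t^{\text{HS}}\ra + \ldba \mu - \widehat\mu_t^{\text{HS}}\rdba^2$; taking $\E_{t-1}[\cdot]$ and using predictability of $\widehat\mu_t^{\text{HS}}$ together with $\E_{t-1} X_t = \mu$ kills the cross term, while $\E_{t-1}\|X_t - \mu\|^2 = \sigma^2$ by assumption. This yields $\E_{t-1} Y_t = \sigma^2 + \ldba\widehat\mu_t^{\text{HS}} - \mu\rdba^2 = \tilde\sigma_t^2$, matching the definition given in the statement. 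In a general Banach space one would only recover a triangle-type upper bound, inflating the relevant constants; this is exactly the obstruction flagged in the conclusion.

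With (i) and (ii) in hand, the remaining work is bookkeeping: the sequence $(Y_t)$ is adapted, $[0,1]$-valued, with known conditional mean $\tilde\sigma_t^2$ (itself predictable, since $\widehat\mu_t^{\text{HS}}$ is), and $\widehat\sigma_i$, $\lambda_i$ are predictable with $\widehat\sigma_i \in [0,1]$, $\lambda_i \in [0,1)$. Plugging $Y_i$ in place of $(X_i-\widehat\mu_i)^2$ into the scalar construction, the displayed expression for $S_t^{\pm,\text{HS}}$ coincides with $S_t^\pm$ from Theorem~\ref{theorem:main_supermartingale_theorem}, and the scalar proof (which only ever uses boundedness of $(X_i-\widehat\mu_i)^2 \in [0,1]$ and its conditional mean $\tilde\sigma_i^2$) applies without change to yield $\E_{t-1} S_t^{\pm,\text{HS}} \leq S_{t-1}^{\pm,\text{HS}}$. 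Nonnegativity is immediate from the exponential form, completing the argument.
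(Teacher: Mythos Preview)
Your proof is correct and takes essentially the same approach as the paper, which simply states that the argument is analogous to Theorem~\ref{theorem:main_supermartingale_theorem} with $(X_t-\widehat\mu_t)^2-\tilde\sigma_t^2$ replaced by $\ldba X_t-\widehat\mu_t^{\text{HS}}\rdba^2-\tilde\sigma_t^2$. You actually spell out the one nontrivial verification the paper leaves implicit, namely the inner-product expansion establishing $\E_{t-1}\ldba X_t-\widehat\mu_t^{\text{HS}}\rdba^2=\tilde\sigma_t^2$, and correctly flag it as the only step requiring Hilbert (rather than Banach) structure.
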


This theorem implies that the upper and lower inequalities previously derived for one dimensional processes equally apply to Hilbert spaces. Denoting
\begin{align*}
    R_{t, \alpha}^{\text{HS}} &:= \frac{\log(1/\alpha) + \sum_{i \leq t}\psi_E(\lambda_i) \lp \ldba X_i - \widehat\mu_i^{\text{HS}}\rdba^2 - \widehat \sigma_i^2 \rp^2}{\sum_{i \leq t} \lambda_i},
    \\ D_t^{\text{HS}} &:= \frac{\sum_{i \leq t}\ldba \lambda_i X_i - \widehat\mu_i^{\text{HS}}\rdba^2}{\sum_{i \leq t} \lambda_i}, \quad E_t^{\text{HS}} :=  \frac{\sum_{i \leq t} \lambda_i \ldba \widehat\mu_i^{\text{HS}} - \mu\rdba^2}{\sum_{i \leq t} \lambda_i},
\end{align*}
the following corollary is a direct consequence of Theorem~\ref{theorem:main_supermartingale_theorem_HS}, whose proof is analogous to that of Corollary~\ref{corollary:main_corollary}.
\begin{corollary} \label{corollary:main_corollary_hs}
    Let Assumption~\ref{ass:main_assumption} hold.  For any $[0, 1)$-valued predictable sequence $(\lambda_i)_{i \geq 1}$, any $B_{\frac{1}{2}}(0)$-valued predictable sequence $(\widehat\mu_i^{\text{HS}})_{i \geq 1}$, and any $[0, 1]$-valued predictable sequence $(\widehat\sigma_i)_{i \geq 1}$, the sequence of sets
    \begin{align*}
    \lp D_t^{\text{HS}} -  E_t^{\text{HS}} \pm R_{t, \frac{\alpha}{2}}^{\text{HS}}\rp
\end{align*}
is a $1-\alpha$ confidence sequence for $\sigma^2$.
\end{corollary}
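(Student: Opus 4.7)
The plan is to mirror the proof of Corollary~\ref{corollary:main_corollary}, using the Hilbert-space supermartingales constructed in Theorem~\ref{theorem:main_supermartingale_theorem_HS} in place of the scalar ones. The only pieces we really need are Ville's inequality and a union bound.

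First I would apply Theorem~\ref{theorem:villesinequality} to each of the nonnegative supermartingales $(S_t^{+,\text{HS}})_{t\geq 0}$ and $(S_t^{-,\text{HS}})_{t\geq 0}$ at level $2/\alpha$. Since $S_0^{\pm,\text{HS}}=1$, this yields
\begin{align*}
\Pb\!\lp \exists t\geq 1:\; S_t^{\pm,\text{HS}}\geq 2/\alpha\rp \leq \alpha/2.
\end{align*}
On the event $\{\forall t:\; S_t^{+,\text{HS}} < 2/\alpha\}$, taking logarithms inside the exponential defining $S_t^{+,\text{HS}}$ and rearranging gives
\begin{align*}
\sum_{i\leq t}\lambda_i\lb \ldba X_i-\widehat\mu_i^{\text{HS}}\rdba^2 - \tilde\sigma_i^2\rb \;\leq\; \log(2/\alpha) + \sum_{i\leq t}\psi_E(\lambda_i)\lb \ldba X_i-\widehat\mu_i^{\text{HS}}\rdba^2 - \widehat\sigma_i^2\rb^2.
\end{align*}
An analogous bound with the opposite sign holds on the event $\{\forall t:\; S_t^{-,\text{HS}} < 2/\alpha\}$.

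Next I would substitute the definition $\tilde\sigma_i^2 = \sigma^2 + \ldba \widehat\mu_i^{\text{HS}} - \mu\rdba^2$, divide by $\sum_{i\leq t}\lambda_i > 0$ (this is positive for all $t\geq 1$ since $(\lambda_i)\subset[0,1)$ and at least one term is nonzero, or else the bound is vacuous), and recognize the resulting expressions as $D_t^{\text{HS}}$, $E_t^{\text{HS}}$, and $R_{t,\alpha/2}^{\text{HS}}$. This produces the two one-sided inequalities
\begin{align*}
\pm\lp D_t^{\text{HS}} - \sigma^2 - E_t^{\text{HS}} \rp \leq R_{t,\alpha/2}^{\text{HS}},
\end{align*}
each holding simultaneously for all $t\geq 1$ with probability at least $1-\alpha/2$. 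A union bound over the two events, followed by rearrangement, yields
\begin{align*}
\sigma^2 \in \lp D_t^{\text{HS}} - E_t^{\text{HS}} \pm R_{t,\alpha/2}^{\text{HS}} \rp \quad \text{for all } t\geq 1
\end{align*}
with probability at least $1-\alpha$, which is precisely the claimed confidence sequence. There is no real obstacle here: all the analytic work lives in Theorem~\ref{theorem:main_supermartingale_theorem_HS}, and this corollary is essentially a bookkeeping exercise, with the only substantive change from the scalar case being the replacement of $(X_i-\widehat\mu_i)^2$ by $\ldba X_i-\widehat\mu_i^{\text{HS}}\rdba^2$ and of $(\widehat\mu_i-\mu)^2$ by $\ldba \widehat\mu_i^{\text{HS}}-\mu\rdba^2$ throughout.
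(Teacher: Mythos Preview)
Your proposal is correct and follows exactly the approach the paper indicates: it explicitly says the proof of Corollary~\ref{corollary:main_corollary_hs} is analogous to that of Corollary~\ref{corollary:main_corollary}, and you have carried out precisely that analogy, applying Ville's inequality to the Hilbert-space supermartingales of Theorem~\ref{theorem:main_supermartingale_theorem_HS}, taking logs, substituting $\tilde\sigma_i^2 = \sigma^2 + \ldba \widehat\mu_i^{\text{HS}} - \mu\rdba^2$, dividing through, and union bounding.
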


From Corollary~\ref{corollary:main_corollary_hs}, upper and lower inequalities for the variance can be derived analogously to those presented in Section~\ref{section:main_results}. That is, in order to derive an upper inequalities for the variance, it suffices to ignore the term $E_t^{\text{HS}}$.

\begin{corollary} [Vector-valued upper empirical Bernstein for the variance]
    Let Assumption~\ref{ass:main_assumption} hold.  For any $B_{\frac{1}{2}}(0)$-valued predictable sequence $(\widehat\mu_i^{\text{HS}})_{i \geq 1}$, any $[0, 1]$-valued predictable sequence $(\widehat\sigma_i)_{i \geq 1}$, and any $[0, 1)$-valued predictable sequence $(\lambda_i)_{i \geq 1}$, it holds that $\lp -\infty, U_t^{\text{HS}}\rp$ is a $1-\alpha$ upper confidence sequence for $\sigma^2$, where
    \begin{align*}
    U_t^{\text{HS}} := D_t^{\text{HS}}+ R_{t, \alpha}^{\text{HS}}.
    \end{align*}
\end{corollary}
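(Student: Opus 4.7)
The plan is to mirror almost verbatim the one-dimensional argument behind Corollary~\ref{corollary:upper_bound}, using the Hilbert-space supermartingale $S_t^{-,\text{HS}}$ provided by Theorem~\ref{theorem:main_supermartingale_theorem_HS} in place of its scalar counterpart. Only the ``$-$'' process is needed, since we only want an upper bound on $\sigma^{2}$.

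First, I would fix the three predictable sequences stated in the hypothesis and invoke Theorem~\ref{theorem:main_supermartingale_theorem_HS} to conclude that $(S_t^{-,\text{HS}})_{t\geq 0}$ is a nonnegative supermartingale with $S_0^{-,\text{HS}}=1$. Applying Ville's inequality (Theorem~\ref{theorem:villesinequality}) with $x=1/\alpha$, I obtain that, with probability at least $1-\alpha$, the inequality $S_t^{-,\text{HS}}<1/\alpha$ holds simultaneously for every $t\geq 0$.

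Next, on that event I would take logarithms, yielding
\begin{align*}
\sum_{i\leq t}\lambda_i\bigl[\tilde\sigma_i^{2}-\ldba X_i-\widehat\mu_i^{\text{HS}}\rdba^{2}\bigr]
-\sum_{i\leq t}\psi_E(\lambda_i)\bigl[\ldba X_i-\widehat\mu_i^{\text{HS}}\rdba^{2}-\widehat\sigma_i^{2}\bigr]^{2}
<\log(1/\alpha).
\end{align*}
Dividing through by $\sum_{i\leq t}\lambda_i>0$ and recalling that $\tilde\sigma_i^{2}=\sigma^{2}+\ldba\widehat\mu_i^{\text{HS}}-\mu\rdba^{2}$, the left-hand side becomes $\sigma^{2}+E_t^{\text{HS}}-D_t^{\text{HS}}$, and the right-hand side is exactly $R_{t,\alpha}^{\text{HS}}$. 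Rearranging gives
\begin{align*}
\sigma^{2}+E_t^{\text{HS}}<D_t^{\text{HS}}+R_{t,\alpha}^{\text{HS}}=U_t^{\text{HS}}.
\end{align*}
Finally, since $E_t^{\text{HS}}\geq 0$ by definition (it is a weighted average of squared norms), the displayed inequality immediately implies $\sigma^{2}<U_t^{\text{HS}}$ for every $t$ on the same high-probability event, which is precisely the desired $1-\alpha$ upper confidence sequence.

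There is essentially no obstacle once Theorem~\ref{theorem:main_supermartingale_theorem_HS} is in hand: the whole derivation is a one-line application of Ville's inequality followed by nonnegativity of $E_t^{\text{HS}}$. The only subtle point worth flagging is the choice between $S_t^{+,\text{HS}}$ and $S_t^{-,\text{HS}}$; using $S_t^{-,\text{HS}}$ is what produces an upper bound on $\sigma^{2}$ (and allows us to use $R_{t,\alpha}^{\text{HS}}$ rather than $R_{t,\alpha/2}^{\text{HS}}$, since no union bound over the two sides is needed).
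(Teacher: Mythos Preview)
Your proposal is correct and follows essentially the same approach as the paper: apply Ville's inequality to the one-sided supermartingale $S_t^{-,\text{HS}}$ from Theorem~\ref{theorem:main_supermartingale_theorem_HS}, rearrange to obtain $\sigma^2 + E_t^{\text{HS}} < D_t^{\text{HS}} + R_{t,\alpha}^{\text{HS}}$, and drop the nonnegative term $E_t^{\text{HS}}$. The paper states this only informally (``it suffices to ignore the term $E_t^{\text{HS}}$''), but your explicit derivation is exactly the intended argument, including the observation that using only $S_t^{-,\text{HS}}$ avoids the union bound and yields $R_{t,\alpha}^{\text{HS}}$ rather than $R_{t,\alpha/2}^{\text{HS}}$.
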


In order to derive $\tilde R_{i, \delta}$ such that  $\ldba \widehat\mu_i - \mu\rdba \leq \tilde R_{i, \delta}$ for all $i \geq 1$, we propose to use the vector-valued anytime valid Bennett's inequality from Theorem~\ref{theorem:vectorvalued_bennett_anytimevalid}. That is, take 
\begin{align} \label{eq:widehat_mu_definition_hs}
    \widehat\mu_t^{\text{HS}} = \frac{\sum_{i=1}^{t-1} \tilde \lambda_i X_i}{\sum_{i=1}^{t-1} \tilde \lambda_i}, \quad \tilde R_{t, \delta} = \frac{\log(2/\delta) + \sigma^2\sum_{i=1}^{t-1} \psi_P(\tilde \lambda_i)}{\sum_{i=1}^{t-1} \tilde \lambda_i}.
\end{align}
These choices of $\widehat\mu_t^{\text{HS}}$ and $\tilde R_{t, \delta}$ lead to the same exact definition of $\tilde A_{t}$, $\tilde B_{t, \delta}$, $\tilde C_{t, \delta}$, $A_{t}$, $B_{t, \delta}$, and $C_{t, \delta}$ from Section~\ref{section:main_results}. The following corollary follows analogously to its one-dimensional counterpart.
\begin{corollary} [Vector-valued lower empirical Bernstein for the variance] \label{corollary:lower_bound_hs}
    Let Assumption~\ref{ass:main_assumption} hold. For the $B_{\frac{1}{2}}(0)$-valued predictable sequence $(\widehat\mu_i^{\text{HS}})_{i \geq 1}$ defined in \eqref{eq:widehat_mu_definition}, any $[0, 1]$-valued predictable sequence $(\widehat\sigma_i^2)_{i \geq 1}$, any $[0, 1)$-valued predictable sequence $(\lambda_i)_{i \geq 1}$, and any $[0, \infty)$-valued predictable sequence $(\tilde\lambda_i)_{i \geq 1}$, it holds that $\lp L_t^{\text{HS}}, \infty\rp$ is a $1-\alpha$ lower confidence sequence for $\sigma^2$, 
    where $\alpha_1 + \alpha_2 = \alpha$ and
    \begin{align*}
        L_t^{\text{HS}} := \frac{-B_{t, \alpha_1} + \sqrt{B_{t, \alpha_1}^2 + 4A_t(D_t^{\text{HS}} - C_{t, \alpha_1} - R_{t, \alpha_2}^{\text{HS}})}}{2A_t}.
    \end{align*}
\end{corollary}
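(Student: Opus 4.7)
The plan is to mirror the proof of the scalar version, Corollary~\ref{corollary:lower_bound}, using the Hilbert space analogues we already have in hand: the one-sided version of Corollary~\ref{corollary:main_corollary_hs} (equivalently, Ville's inequality applied to the supermartingale $S_t^{-,\text{HS}}$ from Theorem~\ref{theorem:main_supermartingale_theorem_HS}) and the vector-valued Bennett inequality of Theorem~\ref{theorem:vectorvalued_bennett_anytimevalid}. The idea is that the same decoupling used in the one-dimensional case — ``use the supermartingale to control $D_t^{\text{HS}}-E_t^{\text{HS}}-\sigma^2$, and use Bennett to control $E_t^{\text{HS}}$ through a bound on $\|\widehat\mu_i^{\text{HS}}-\mu\|$'' — carries over unchanged because both tools only ``see'' $\|X_i-\widehat\mu_i^{\text{HS}}\|^2$ and $\|\widehat\mu_i^{\text{HS}}-\mu\|^2$.

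Concretely, first I would apply the lower-tail consequence of Theorem~\ref{theorem:main_supermartingale_theorem_HS} (via Ville's inequality) at level $1-\alpha_2$ with the given predictable plug-ins $(\lambda_i)$, $(\widehat\sigma_i)$ and $(\widehat\mu_i^{\text{HS}})$; this yields the event
\begin{align*}
    \Omega_2 \;=\; \bigl\{\forall t\geq 0:\ \sigma^2 \geq D_t^{\text{HS}} - E_t^{\text{HS}} - R_{t,\alpha_2}^{\text{HS}}\bigr\}, \qquad P(\Omega_2)\geq 1-\alpha_2.
\end{align*}
Second, I would apply Theorem~\ref{theorem:vectorvalued_bennett_anytimevalid} with the predictable sequence $(\tilde\lambda_i)$ at level $1-\alpha_1$, which, because $\widehat\mu_t^{\text{HS}}$ defined in \eqref{eq:widehat_mu_definition_hs} is exactly the center of the Bennett ball at step $t-1$, gives
\begin{align*}
    \Omega_1 \;=\; \bigl\{\forall i\geq 1:\ \|\widehat\mu_i^{\text{HS}}-\mu\|\leq \tilde R_{i,\alpha_1}\bigr\}, \qquad P(\Omega_1)\geq 1-\alpha_1.
\end{align*}
On $\Omega_1\cap\Omega_2$ (which has probability at least $1-\alpha$ by a union bound), $E_t^{\text{HS}}\leq \tfrac{\sum_{i\leq t}\lambda_i \tilde R_{i,\alpha_1}^2}{\sum_{i\leq t}\lambda_i}$, so
\begin{align*}
    \sigma^2 \;\geq\; D_t^{\text{HS}} - \frac{\sum_{i\leq t}\lambda_i\tilde R_{i,\alpha_1}^2}{\sum_{i\leq t}\lambda_i} - R_{t,\alpha_2}^{\text{HS}}.
\end{align*}

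Third, I would expand $\tilde R_{i,\alpha_1}^2$ as a quadratic in $\sigma^2$ using the definitions of $\tilde A_i,\tilde B_{i,\alpha_1},\tilde C_{i,\alpha_1}$; averaging against $(\lambda_i)$ and moving the $\sigma^2$ on the left-hand side into the coefficient (this is exactly where the $+1$ in $B_{t,\alpha_1}$ comes from) yields the quadratic inequality
\begin{align*}
    A_t (\sigma^2)^2 + B_{t,\alpha_1}\,\sigma^2 + C_{t,\alpha_1} + R_{t,\alpha_2}^{\text{HS}} - D_t^{\text{HS}} \;\geq\; 0.
\end{align*}
Since $A_t\geq 0$ and $B_{t,\alpha_1}>0$, solving for $\sigma^2\geq 0$ and taking the larger root gives precisely $\sigma^2\geq L_t^{\text{HS}}$; the bound is vacuous (i.e. $L_t^{\text{HS}}\leq 0$) in the easy case $D_t^{\text{HS}}-C_{t,\alpha_1}-R_{t,\alpha_2}^{\text{HS}}\leq 0$, so no separate treatment is needed.

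The main obstacle is already absorbed by the statement of the prerequisites: the vector-valued Bennett inequality (Theorem~\ref{theorem:vectorvalued_bennett_anytimevalid}) and the vector-valued supermartingale construction (Theorem~\ref{theorem:main_supermartingale_theorem_HS}) are the non-trivial ingredients, and once both are available the remainder of the argument is the algebraic manipulation above. The only subtlety worth a sentence in the written proof is that the quadratic step uses $A_t\geq 0$ and requires $\sigma^2\geq 0$, which forces the $L_t^{\text{HS}}$ branch (rather than the smaller root) as the tight lower bound.
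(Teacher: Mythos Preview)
Your proposal is correct and follows essentially the same route as the paper: the paper explicitly states that the corollary ``follows analogously to its one-dimensional counterpart,'' and the one-dimensional proof (Appendix~\ref{proof:lower_bound}) is precisely the union-bound-plus-quadratic-solve argument you describe, with Theorem~\ref{theorem:vectorvalued_bennett_anytimevalid} and Theorem~\ref{theorem:main_supermartingale_theorem_HS} replacing their scalar counterparts. Your added remark about the sign conditions $A_t\geq 0$, $B_{t,\alpha_1}>0$ selecting the correct root is a nice clarification that the paper leaves implicit.
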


We propose to take the plug-ins $(\lambda_i)_{i \geq 1}$ and $(\tilde\lambda_i)_{i \geq 1}$ analogously to Section~\ref{section:main_results}. These choices require that the definitions of $\widehat m_{4, t}^{2}$ and $\widehat\sigma_t^2$ from Section~\ref{section:main_results} naturally replace the squares by the squares of the norms. Similarly to Section~\ref{section:main_results}, the choice of the split of $\alpha$ into $\alpha_1$ and $\alpha_2$ is also of importance. In practice, we propose to take $\alpha_1$ and $\alpha_2$ analogously to Section~\ref{section:main_results}.
The optimality of the results from Section~\ref{section:analysis_widths} for specific choices of $\alpha_1$ and $\alpha_2$ extends analogously to Hilbert spaces. However, Assumption~\ref{assumption:optimal_widths} ought to be replaced by the following assumption. 

\begin{assumption} \label{assumption:optimal_widths_hs}
 $X_1, X_2, \ldots$ is such that $\V_{i-1} \lb \| X_i-\mu\|^2 \rb$ is constant across $i$.
\end{assumption}

Under Assumption~\ref{ass:hs_assumption} and Assumption~\ref{assumption:optimal_widths_hs}, the first order term of the width of the confidence intervals can be compared with that from the oracle Bernstein-type inequality, i.e., $$\sqrt{2 \V \lb \|X_i-\mu\|^2\rb \log (1 / \alpha)}.$$
The following corollary establishes that the first order width of the confidence intervals does indeed match this oracle benchmark. Its proof is not provided given that it is completely analogous to that of Section~\ref{section:analysis_widths}. 

\begin{corollary} [Sharpness]
Let Assumption \ref{ass:hs_assumption} and Assumption \ref{assumption:optimal_widths_hs} hold. If $\alpha = \alpha_{1, n} + \alpha_{2, n}$ as defined in \eqref{eq:final_def_alphas}, then
\begin{align*}
    &\sqrt{n} (U_n^{\text{HS}}  - D_n^{\text{HS}} ) \stackrel{a.s.}{\to} \sqrt{2 \V \lb \|X_i-\mu\|^2\rb \log (1 / \alpha)}, 
    \\ &\sqrt{n} (D_n^{\text{HS}}  - L_n^{\text{HS}} ) \stackrel{a.s.}{\to} \sqrt{2 \V \lb \|X_i-\mu\|^2\rb \log (1 / \alpha)}.
\end{align*}
\end{corollary}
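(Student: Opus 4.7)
The plan is to mirror the scalar proof from Section~\ref{section:analysis_widths} by establishing Hilbert-space analogs of Theorem~\ref{theorem:convergence_main_term} and Theorem~\ref{theorem:lower_order_term}, and then assembling them via the explicit choice of $(\alpha_{1,n}, \alpha_{2,n})$ in \eqref{eq:final_def_alphas}. First, I would note that by Corollary~\ref{corollary:main_corollary_hs}, the width of the upper confidence interval is exactly $U_n^{\text{HS}} - D_n^{\text{HS}} = R_{n,\alpha}^{\text{HS}}$, and the width of the lower one equals $D_n^{\text{HS}} - L_n^{\text{HS}}$ which, after solving the quadratic in the definition of $L_n^{\text{HS}}$, is upper bounded by the expression analogous to the scalar case: it consists of $R_{n, \alpha_{2,n}}^{\text{HS}}$ plus a weighted average of the $\tilde R_{i,\alpha_{1,n}}^2$ terms stemming from controlling $E_n^{\text{HS}}$ via the vector-valued Bennett inequality (Theorem~\ref{theorem:vectorvalued_bennett_anytimevalid}).

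Second, I would reprove Theorem~\ref{theorem:convergence_main_term} in the Hilbert setting, replacing $(X_i - \widehat\mu_i)^2$ by $\|X_i - \widehat\mu_i^{\text{HS}}\|^2$ throughout. The key ingredients are: (i) under Assumption~\ref{ass:hs_assumption}, $\widehat\mu_i^{\text{HS}} \to \mu$ in norm almost surely (which follows from Theorem~\ref{theorem:vectorvalued_bennett_anytimevalid}, taking the confidence radius to zero as $i \to \infty$ for the specific plug-ins $\tilde\lambda_i$ chosen); (ii) consequently, $\widehat\sigma_t^2$, defined as the Cesàro-style average of $\|X_i - \bar\mu_i^{\text{HS}}\|^2$, converges almost surely to $\sigma^2$ by a martingale LLN; (iii) the empirical fourth-moment surrogate $\widehat m_{4,t}^2$ converges almost surely to $\V[\|X_i - \mu\|^2]$ under Assumption~\ref{assumption:optimal_widths_hs}. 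These facts make the plug-in $\lambda_t \asymp \sqrt{2\log(1/\delta_n)/(\widehat m_{4,t}^2\, t \log(1+t))}$ asymptotically equivalent to its oracle counterpart, and the same Taylor expansion $\psi_E(\lambda) \sim \lambda^2/2$ argument used in the scalar proof yields $\sqrt{n}\, R_{n,\delta_n}^{\text{HS}} \cas \sqrt{2\V[\|X_i - \mu\|^2] \log(1/\delta)}$.

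Third, I would carry over Theorem~\ref{theorem:lower_order_term} verbatim. Here the scalar proof shows the extra term vanishes at rate $\widetilde{\bigO}(1/\sqrt{n})$ provided $\alpha_{1,n} = \Omega(1/\log n)$, because $\tilde R_{i,\alpha_{1,n}} \asymp \sqrt{\log(2/\alpha_{1,n})/i}$ once the Bennett plug-ins $\tilde\lambda_i$ settle. In the Hilbert setting, the same bound $\tilde R_{i,\alpha_{1,n}}^2 = \bigO(\log\log n / i)$ holds with literally the same formula (Theorem~\ref{theorem:vectorvalued_bennett_anytimevalid} has the same functional form as Theorem~\ref{theorem:bennett_anytimevalid}), so the weighted average $\tfrac{1}{\sum \lambda_i}\sum \lambda_i \tilde R_{i,\alpha_{1,n}}^2$ is controlled identically. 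Plugging in the splits from \eqref{eq:final_def_alphas} gives $\delta_n = \alpha_{2,n} \to \alpha$ and $\alpha_{1,n} = \Theta(1/\log n)$, which combined with the two theorems yields the claimed limits for both $\sqrt{n}(U_n^{\text{HS}} - D_n^{\text{HS}})$ and $\sqrt{n}(D_n^{\text{HS}} - L_n^{\text{HS}})$.

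The main obstacle — although really a routine check rather than a conceptual one — is verifying that $\widehat\mu_i^{\text{HS}} \to \mu$ at a fast enough rate in the Hilbert norm so that the almost sure convergences of $\widehat\sigma_t^2$ and $\widehat m_{4,t}^2$ go through. This is what forces the use of the vector-valued Bennett inequality rather than an ad hoc estimator and is the reason the estimator $\widehat\mu_i^{\text{HS}}$ was defined via \eqref{eq:widehat_mu_definition_hs} in the first place. Once this convergence is in hand, every remaining step is algebraically identical to the scalar case, justifying the author's claim that the proof is omitted as fully analogous.
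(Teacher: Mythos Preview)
Your proposal is essentially correct and matches the paper's approach, which is simply to rerun the scalar analysis of Section~\ref{section:analysis_widths} with $(X_i-\widehat\mu_i)^2$ replaced by $\|X_i-\widehat\mu_i^{\text{HS}}\|^2$ and Theorem~\ref{theorem:bennett_anytimevalid} replaced by Theorem~\ref{theorem:vectorvalued_bennett_anytimevalid}. One slip worth flagging: the plug-in you wrote, $\lambda_t \asymp \sqrt{2\log(1/\delta_n)/(\widehat m_{4,t}^2\, t\log(1+t))}$, is the confidence-\emph{sequence} choice; the sharpness analysis in Section~\ref{section:ci} uses the confidence-\emph{interval} plug-in $\lambda_{i,u,\alpha}^{\text{CI}}=\sqrt{2\log(1/\alpha)/(\widehat m_{4,i}^2\,n)}\wedge c_1$ from \eqref{eq:lambda_ci_definition_upper}, and without this the extra $\sqrt{\log n}$ factor would prevent the claimed limit.
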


% \section{Analysis of the widths for the standard deviation} \label{section:analysis_widths_std}

% We devote this section to showing that the confidence intervals for the std also achieve $1/\sqrt{n}$ rates. To see this, let $(L_n, U_n)$ be the confidence interval for the variance obtained in Section~\ref{section:main_results}. We showed in Section~\ref{section:analysis_widths} that, for big $n$,
% \begin{align*}
%     L_n \approx \sigma^2 - 2\sqrt{\frac{c \V \lb (X_i-\mu)^2\rb}{n}}, \quad U_n \approx \sigma^2 + 2\sqrt{\frac{c \V \lb (X_i-\mu)^2\rb}{n}}
% \end{align*}
% with $c = \frac{\log (1 / \alpha)}{2}$. At first glance, it could seem like $\sqrt{L_n}$ and $\sqrt{U_n}$ approach $\sigma$ at $n^{-1/4}$ rates, instead of $n^{-1/2}$ rates. Nonetheless, let us note that 
% \begin{align*}
%     \V \lb (X_i-\mu)^2\rb \leq \E\lb (X_i-\mu)^2\rb - \E^2\lb (X_i-\mu)^2\rb = \sigma^2(1 - \sigma^2) \leq \sigma^2.
% \end{align*}
% Thus,
% \begin{align*}
%    \sqrt U_n \lessapprox \sqrt{\sigma^2 + 2\sigma\sqrt{\frac{c}{n}}} = \sqrt{\lp\sigma + \sqrt{\frac{c}{n}}\rp^2 - {\frac{c}{n}}} \leq \sigma + \sqrt{\frac{c}{n}},
% \end{align*}
% and so $\sqrt U_n$ approaches $\sigma$ at a $n^{-1/2}$ rate. Analogously, $\sqrt L_n$ also approaches $\sigma$ at the same rate.

\section{Auxiliary lemmata} \label{section:auxiliary_lemmas}

\begin{lemma} \label{lemma:psi_p_decoupled}
For $a \in [0, 1]$ and $b \geq 0$,
\begin{align*}
    \psi_P(ab) \leq a^2 \psi_P(b), \quad \psi_E(ab) \leq a^2 \psi_E(b).
\end{align*}
\end{lemma}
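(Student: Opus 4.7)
The plan is to prove both inequalities by a term-by-term comparison of the Taylor expansions of $\psi_P$ and $\psi_E$ around zero. Both functions admit power series representations with nonnegative coefficients and no terms of order less than two, namely
\begin{align*}
    \psi_P(x) = \sum_{k \geq 2} \frac{x^k}{k!}, \qquad \psi_E(x) = \sum_{k \geq 2} \frac{x^k}{k},
\end{align*}
the first valid on all of $\R$ and the second on $[0,1)$, which is the regime in which $\psi_E$ is actually used in the paper (predictable plug-ins $\lambda_i \in [0,1)$).

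First I would invoke the elementary bound $a^k \leq a^2$ for every $a \in [0,1]$ and every $k \geq 2$. Applied termwise to the series for $\psi_P$, and using that $b^k \geq 0$ since $b \geq 0$, one gets
\begin{align*}
    \psi_P(ab) = \sum_{k \geq 2} \frac{a^k b^k}{k!} \leq a^2 \sum_{k \geq 2} \frac{b^k}{k!} = a^2 \psi_P(b).
\end{align*}
The identical argument applied to the series for $\psi_E$ (noting that $b \in [0,1)$ also gives $ab \in [0,1)$, so convergence is not at issue) yields $\psi_E(ab) \leq a^2 \psi_E(b)$.

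There is essentially no obstacle here: the lemma is a purely algebraic statement once the power series representations are in hand, and both series have the required ``quadratic vanishing at $0$ with nonnegative coefficients'' structure. An alternative route would fix $b$ and check that $a \mapsto a^2 \psi(b) - \psi(ab)$ is nonnegative by examining its value and derivatives at $a = 0$, but the termwise series comparison is cleaner and treats both $\psi_P$ and $\psi_E$ uniformly.
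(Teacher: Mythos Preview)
Your proof is correct and is essentially identical to the paper's own argument: both expand $\psi_P$ and $\psi_E$ as power series with nonnegative coefficients starting at $k=2$ and use $a^k \leq a^2$ termwise. Your remark that the $\psi_E$ inequality is only needed (and only makes sense as stated) for $b \in [0,1)$ is a useful clarification that the paper leaves implicit.
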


\begin{proof}
    It suffices to observe that
    \begin{align*}
        \psi_P(ab) = \sum_{k = 2}^\infty \frac{(ab)^k}{k!} \stackrel{(i)}{\leq} a^2 \sum_{k = 2}^\infty \frac{b^k}{k!} = a^2 \psi_P(b),
    \end{align*}
    as well as
    \begin{align*}
        \psi_E(ab) = \sum_{k = 2}^\infty \frac{(ab)^k}{k} \stackrel{(i)}{\leq} a^2 \sum_{k = 2}^\infty \frac{b^k}{k} = a^2 \psi_E(b),
    \end{align*}
    where in both (i) follows from $|a| \leq 1$.
\end{proof}

\begin{lemma} \label{lemma:psi_E_vs_psiN_increasing}
Let $\psi_N = \frac{\lambda^2}{2}$ and $\psi_E(\lambda) = -\log(1-\lambda)-\lambda$. The function $\lambda \in [0, 1) \mapsto \frac{\psi_E(\lambda)}{\psi_N(\lambda)}$ is increasing. 
\end{lemma}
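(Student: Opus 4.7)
The plan is to expand $\psi_E$ as a power series and read off monotonicity from the resulting representation of the ratio. First I would recall the Taylor expansion $-\log(1-\lambda) = \sum_{k \geq 1} \lambda^k / k$, valid for $\lambda \in [0,1)$. Subtracting $\lambda$ cancels the $k=1$ term, so $\psi_E(\lambda) = \sum_{k \geq 2} \lambda^k / k$.

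Next, I would divide by $\psi_N(\lambda) = \lambda^2/2$ (with the value at $\lambda = 0$ defined by continuity as the limit $1$), to obtain
\begin{align*}
\frac{\psi_E(\lambda)}{\psi_N(\lambda)} \;=\; \sum_{k \geq 2} \frac{2 \lambda^{k-2}}{k} \;=\; \sum_{j \geq 0} \frac{2 \lambda^j}{j+2}.
\end{align*}
Every coefficient $2/(j+2)$ is strictly positive, and each monomial $\lambda^j$ is nondecreasing on $[0,1)$, strictly so for $j \geq 1$. Hence the ratio, being a convergent sum of nondecreasing nonnegative functions, is itself nondecreasing on $[0,1)$ (and strictly increasing on $(0,1)$), which is exactly what the lemma asserts.

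I do not anticipate a real obstacle here: the only subtlety is justifying the termwise rearrangement, which is immediate because the series has nonnegative terms and is absolutely convergent for every $\lambda \in [0,1)$. A purely calculus-based alternative would compute $(\psi_E/\psi_N)'$ and verify its sign by combining the Taylor expansions of $\log(1-\lambda)$ and $(1-\lambda)^{-1}$; this works but is more bookkeeping-heavy, so I would prefer the series argument above for its transparency.
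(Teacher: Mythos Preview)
Your proposal is correct and follows essentially the same approach as the paper: both expand $\psi_E(\lambda)=\sum_{k\geq 2}\lambda^k/k$, divide by $\lambda^2/2$, and observe that the resulting power series $2\sum_{k\geq 2}\lambda^{k-2}/k$ has nonnegative coefficients and is therefore increasing on $[0,1)$.
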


\begin{proof}
    It suffices to observe that 
    \begin{align*}
         \frac{\psi_E(\lambda)}{\psi_N(\lambda)} =  \frac{\sum_{k\geq2} \frac{\lambda^k}{k}}{\frac{\lambda^2}{2}} = 2\sum_{k\geq2} \frac{\lambda^{k-2}}{k},
    \end{align*}
    which is clearly increasing on $\lambda$.
\end{proof}

\begin{lemma} \label{lemma:series_one_over_sqrt}
It holds that
\begin{align*} 
    \sum_{i = 1}^n \frac{1}{\sqrt{i}} \in \lb 2\sqrt{n} - 2, 2\sqrt{n} - 1 \rb,
\end{align*}
and so 
\begin{align*}
    \frac{1}{\sqrt{n}}\sum_{i = 1}^n \frac{1}{\sqrt{i}} \to 2.
\end{align*}
\end{lemma}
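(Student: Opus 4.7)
The plan is to use the standard integral comparison, exploiting that $f(x) = x^{-1/2}$ is positive and strictly decreasing on $(0, \infty)$. For such an $f$, the inequalities $\int_{i}^{i+1} f(x)\, dx \leq f(i)$ and $f(i) \leq \int_{i-1}^{i} f(x)\, dx$ (for $i \geq 2$) hold term by term, and summing these gives both bounds simultaneously.

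For the lower bound, summing $f(i) \geq \int_{i}^{i+1} x^{-1/2}\, dx$ over $i = 1, \ldots, n$ yields
\begin{align*}
    \sum_{i=1}^n \frac{1}{\sqrt{i}} \;\geq\; \int_1^{n+1} \frac{dx}{\sqrt{x}} \;=\; 2\sqrt{n+1} - 2 \;\geq\; 2\sqrt{n} - 2.
\end{align*}
For the upper bound, peeling off the $i=1$ term (which equals $1$) and bounding $f(i) \leq \int_{i-1}^{i} x^{-1/2}\, dx$ for $i \geq 2$ gives
\begin{align*}
    \sum_{i=1}^n \frac{1}{\sqrt{i}} \;\leq\; 1 + \int_1^n \frac{dx}{\sqrt{x}} \;=\; 1 + (2\sqrt{n} - 2) \;=\; 2\sqrt{n} - 1,
\end{align*}
which establishes the two-sided bound.

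The limit claim is then immediate by dividing through by $\sqrt{n}$: both $2 - 2/\sqrt{n}$ and $2 - 1/\sqrt{n}$ converge to $2$, and a squeeze argument concludes. There is no real obstacle here; the only care needed is in indexing the integral comparison correctly so that the endpoints line up with $2\sqrt{n} - 2$ and $2\sqrt{n} - 1$ rather than off-by-one variants.
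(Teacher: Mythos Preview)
Your proof is correct and takes essentially the same approach as the paper's own proof: both use the integral comparison for the decreasing function $x \mapsto x^{-1/2}$ to sandwich the sum between $\int_1^n x^{-1/2}\,dx = 2\sqrt{n}-2$ and $1 + \int_1^n x^{-1/2}\,dx = 2\sqrt{n}-1$, then divide by $\sqrt{n}$ and squeeze. The only cosmetic difference is that you first obtain the slightly sharper lower bound $2\sqrt{n+1}-2$ before relaxing it, whereas the paper writes $\int_1^n$ directly.
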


\begin{proof}
    Given that $x \mapsto \frac{1}{\sqrt{x}}$ is a decreasing function, it follows that
    \begin{align*}
        \int_1^n \frac{1}{\sqrt{x}}dx \leq \sum_{i = 1}^n \frac{1}{\sqrt{i}} \leq  1 + \int_1^n \frac{1}{\sqrt{x}}dx,
    \end{align*}
    with $\int_1^n \frac{1}{\sqrt{x}}dx = 2\sqrt{n} - 2$. In order to conclude the proof, it suffices to note that
    \begin{align*}
         \frac{1}{\sqrt{n}}\sum_{i = 1}^n \frac{1}{\sqrt{i}} \in \lb 2 - \frac{2}{\sqrt{n}}, 2 - \frac{1}{\sqrt{n}}\rb, \quad 2 - \frac{2}{\sqrt{n}} \to 2, \quad  2 - \frac{1}{\sqrt{n}} \to 2,
    \end{align*}
    and invoke the sandwich theorem.
\end{proof}

\begin{lemma} \label{lemma:series_one_over}
It holds that
\begin{align*}
    \sum_{i = 1}^n \frac{1}{i} \in \lb \log{n}, \log{n}+1 \rb,
\end{align*}
and so 
\begin{align*}
    \frac{1}{\log{n}}\sum_{i = 1}^n \frac{1}{i} \to 1.
\end{align*}
\end{lemma}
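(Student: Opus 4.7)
The plan is to proceed exactly as in the preceding Lemma~\ref{lemma:series_one_over_sqrt}: use the fact that $x \mapsto 1/x$ is a positive, strictly decreasing function on $[1,\infty)$ to sandwich the harmonic sum between two integrals of this function, both of which evaluate to logarithms in $n$. The integral $\int_1^n \frac{1}{x}\, dx = \log n$ is the natural continuous analogue of the running sum, so this is the obvious tool.

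Concretely, for the upper bound I would note that, by monotonicity, $\frac{1}{i} \leq \int_{i-1}^{i} \frac{1}{x}\, dx$ for every $i \geq 2$. Summing over $i = 2, \ldots, n$ and adding the $i=1$ term separately yields
\[
\sum_{i=1}^n \frac{1}{i} \;\leq\; 1 + \int_1^n \frac{1}{x}\, dx \;=\; 1 + \log n.
\]
For the lower bound, the same monotonicity gives $\int_i^{i+1} \frac{1}{x}\, dx \leq \frac{1}{i}$ for every $i \geq 1$, so that
\[
\log n \;\leq\; \log(n+1) \;=\; \int_1^{n+1} \frac{1}{x}\, dx \;\leq\; \sum_{i=1}^n \frac{1}{i}.
\]
Together these two estimates give the displayed inclusion $\sum_{i=1}^n \frac{1}{i} \in [\log n, \log n + 1]$ for all $n \geq 1$ (the $n=1$ case, where $\log 1 = 0$, is handled trivially).

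The limit statement then follows immediately from the sandwich theorem: dividing through by $\log n$ (for $n \geq 2$) gives
\[
1 \;\leq\; \frac{1}{\log n}\sum_{i=1}^n \frac{1}{i} \;\leq\; 1 + \frac{1}{\log n},
\]
and the upper bound tends to $1$ as $n \to \infty$. There is no real obstacle here; this is a classical textbook identity, and the only thing to be careful about is the usual bookkeeping at the endpoints of the integral-comparison (in particular peeling off the $i=1$ term for the upper estimate, since $\int_0^1 1/x\, dx$ diverges).
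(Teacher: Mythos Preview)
Your proof is correct and follows essentially the same integral-comparison argument as the paper's own proof, with the same sandwich-theorem conclusion for the limit. The only cosmetic difference is that the paper states the two integral bounds in one line without spelling out the per-term inequalities, and does not pass through $\log(n+1)$ for the lower bound.
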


\begin{proof}
    Given that $x \mapsto \frac{1}{x}$ is a decreasing function, it follows that
    \begin{align*}
        \int_1^n \frac{1}{x}dx \leq \sum_{i = 1}^n \frac{1}{i} \leq  1 + \int_1^n \frac{1}{x}dx,
    \end{align*}
    with $\int_1^n \frac{1}{x}dx = \log n$. In order to conclude the proof, it suffices to note that
    \begin{align*}
         \frac{1}{\log{n}}\sum_{i = 1}^n \frac{1}{i} \in \lb 1,  1 + \frac{1}{\log{n}}\rb, \quad 1 + \frac{1}{\log{n}} \to 1,
    \end{align*}
    and invoke the sandwich theorem.
\end{proof}

\begin{lemma} \label{lemma:series_sqrt}
It holds that
\begin{align*}
    \sum_{i = 1}^n \sqrt i \in \lb \frac{1}{3} + \frac{2}{3} 
 n^{\frac{3}{2}}, -\frac{2}{3} + \frac{2}{3} 
 (n+1)^{\frac{3}{2}} \rb.
\end{align*}
\end{lemma}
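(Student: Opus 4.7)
The approach will mirror the integral comparison used in the preceding Lemma~\ref{lemma:series_one_over_sqrt} and Lemma~\ref{lemma:series_one_over}, but adapted to the fact that $x \mapsto \sqrt{x}$ is \emph{increasing} rather than decreasing. The antiderivative $\int \sqrt{x}\,dx = \tfrac{2}{3} x^{3/2}$ drives both sides of the bound.

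For the upper bound, since $\sqrt{x}$ is nondecreasing we have $\sqrt{i} \leq \int_i^{i+1} \sqrt{x}\,dx$ for every $i \geq 1$. Summing from $i=1$ to $n$ telescopes to $\int_1^{n+1}\sqrt{x}\,dx = \tfrac{2}{3}\bigl((n+1)^{3/2} - 1\bigr) = -\tfrac{2}{3} + \tfrac{2}{3}(n+1)^{3/2}$, which is exactly the claimed upper bound.

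For the lower bound, I would split off the first term: $\sqrt{1} = 1$, and for $i \geq 2$ use the reverse comparison $\sqrt{i} \geq \int_{i-1}^{i} \sqrt{x}\,dx$, again by monotonicity of $\sqrt{\cdot}$. Summing these from $i=2$ to $n$ gives $\int_1^{n}\sqrt{x}\,dx = \tfrac{2}{3}(n^{3/2} - 1)$, and adding the isolated $\sqrt{1}$ term yields $1 + \tfrac{2}{3}(n^{3/2} - 1) = \tfrac{1}{3} + \tfrac{2}{3} n^{3/2}$, matching the stated lower bound.

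There is no real obstacle here; the only subtlety is that a naive application of $\sqrt{i} \geq \int_{i-1}^{i}\sqrt{x}\,dx$ summed from $i=1$ would give the weaker bound $\tfrac{2}{3} n^{3/2}$, so one must isolate $i=1$ (or equivalently integrate from $0$ to $n$ and then tighten by $\sqrt{1}$) to recover the extra $\tfrac{1}{3}$ in the lower bound. This is the same trick used in Lemma~\ref{lemma:series_one_over_sqrt} and Lemma~\ref{lemma:series_one_over} where the monotone function is handled with a one-term correction.
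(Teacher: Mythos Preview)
Your proof is correct and matches the paper's approach exactly: the paper also uses the monotonicity of $x\mapsto\sqrt{x}$ to sandwich the sum between $1+\int_1^n\sqrt{x}\,dx$ and $\int_1^{n+1}\sqrt{x}\,dx$, with the isolated $\sqrt{1}=1$ term providing the extra $\tfrac{1}{3}$ in the lower bound. Your write-up is in fact more explicit than the paper's, which omits the evaluation $\int_1^n\sqrt{x}\,dx=\tfrac{2}{3}(n^{3/2}-1)$ and just states the sandwich inequality.
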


\begin{proof}
    Given that $x \mapsto \sqrt x$ is an increasing function, it follows that
    \begin{align*}
        1 + \int_1^n \sqrt{x}dx \leq \sum_{i = 1}^n \sqrt{i} \leq  \int_1^{n+1} \sqrt{x}dx,
    \end{align*}
    with $\int_1^n \sqrt{x}dx = \frac{2}{3} 
 n^{\frac{3}{2}}$. 
\end{proof}

\begin{lemma} \label{lemma:series_one_over_ilogi}
It holds that
\begin{align*}
    \sum_{i = 2}^\infty \frac{1}{i \log i } = \infty,
\end{align*}
and so 
\begin{align*}
    \sum_{i = 1}^\infty \frac{1}{i \log (i+1) } = \infty.
\end{align*}
\end{lemma}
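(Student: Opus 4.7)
The plan is to prove the first divergence by the integral test, which is the canonical approach for series of the form $\sum \frac{1}{i (\log i)^p}$, and then deduce the second statement by a direct comparison. Both steps are elementary, and I do not expect any serious obstacle; the only care required is the index bookkeeping at the lower endpoint (avoiding $\log 1 = 0$) and correctly handling the shift from $\log i$ to $\log(i+1)$ in the second sum.

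For the first claim, I would verify that $f(x) = \frac{1}{x \log x}$ is positive, continuous, and decreasing on $[2, \infty)$ (both $x$ and $\log x$ are positive and increasing there, so their product is too). Then I would compute $\int_2^n \frac{dx}{x \log x}$ via the substitution $u = \log x$, which gives $\int_{\log 2}^{\log n} \frac{du}{u} = \log \log n - \log \log 2$. Sending $n \to \infty$ yields $+\infty$, so the improper integral diverges, and by the integral test applied to the decreasing positive function $f$, we conclude $\sum_{i=2}^\infty \frac{1}{i \log i} = \infty$.

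For the second claim, I would argue by comparison. For every $i \geq 2$ one has $i+1 \leq i^2$ (since $i^2 - i - 1 = i(i-1) - 1 \geq 1$ for $i \geq 2$), so $\log(i+1) \leq 2 \log i$, whence
\[
\frac{1}{i \log(i+1)} \;\geq\; \frac{1}{2\, i \log i} \qquad \text{for } i \geq 2.
\]
Summing from $i = 2$ and invoking the first part gives $\sum_{i=2}^\infty \frac{1}{i \log(i+1)} = \infty$, and adding back the single finite term at $i=1$ preserves divergence. The entire argument is routine; the comparison $\log(i+1) \leq 2\log i$ is the only computation that is not completely automatic, and it is immediate from $i+1 \leq i^2$.
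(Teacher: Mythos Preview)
Your proof is correct and essentially matches the paper's: both establish the first claim via the integral test with the substitution $u = \log x$, and both derive the second by a comparison argument. The only minor difference is the specific comparison for the second sum: the paper uses $\frac{1}{i\log(i+1)} \geq \frac{1}{(i+1)\log(i+1)}$ and reindexes, whereas you bound $\log(i+1)\leq 2\log i$; both are immediate and equally effective.
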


\begin{proof}
    Given that $x \mapsto \frac{1}{x \log x}$ is a decreasing function, it follows that
    \begin{align*}
         \sum_{i = 2}^{n-1} \frac{1}{i \log i } \geq \int_2^n \frac{1}{x \log x} dx \stackrel{(i)}{=} \int_{\log 2}^{\log n} \frac{1}{u} du,
    \end{align*}
    where we have used the change of variable $u = \log x$ in (i). Thus
    \begin{align*}
        \sum_{i = 2}^{\infty} \frac{1}{i \log i } = \lim_{n\to\infty}\sum_{i = 2}^{n-1} \frac{1}{i \log i } \geq \lim_{n\to\infty} \int_{\log 2}^{\log n} \frac{1}{u} du = \infty.
    \end{align*}
    It remains to note that 
    \begin{align*}
        \sum_{i = 1}^\infty \frac{1}{i \log (i+1) } \geq  \sum_{i = 1}^\infty \frac{1}{(i+1) \log (i+1) } =  \sum_{i = 2}^{\infty} \frac{1}{i \log i }.
    \end{align*}
\end{proof}

\begin{lemma} \label{lemma:sum_ai_over_sum_ai}
    Let $(a_{n})_{n \geq 0}$ be a deterministic sequence such that $a_0 \geq 2$ and $a_n \in [0,1]$ for $n \geq 1$. Then
    \begin{align*}
        \frac{1}{n}\sum_{i = 1}^n \frac{a_i}{\sum_{j \leq i-1} a_{j}} \leq \log \lp \sum_{i = 0}^n a_i\rp
    \end{align*}
\end{lemma}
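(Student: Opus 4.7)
The plan is to exploit the hypothesis $a_0 \geq 2$ directly: it forces every partial sum $\sum_{j \leq i-1} a_j$ to be at least $2$, so each summand on the left-hand side is at most $1/2$, and the averaging factor $1/n$ is essentially free.

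In detail, for each $i \geq 1$, nonnegativity of $a_1, \ldots, a_{i-1}$ combined with $a_0 \geq 2$ gives $\sum_{j \leq i-1} a_j \geq a_0 \geq 2$, and since $a_i \in [0,1]$ this yields $\frac{a_i}{\sum_{j \leq i-1} a_j} \leq \frac{1}{2}$. Averaging over $i = 1, \ldots, n$ bounds the left-hand side of the claimed inequality by $1/2$. On the other hand $\sum_{i=0}^n a_i \geq a_0 \geq 2$, hence $\log\lp \sum_{i = 0}^n a_i\rp \geq \log 2 > 1/2$. Chaining these two inequalities gives the conclusion.

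There is no real obstacle here: the constant $2$ in the assumption $a_0 \geq 2$ is calibrated precisely so that the crude per-term upper bound $1/2$ is dominated by $\log 2 \leq \log a_0$. A finer bound of the form $\sum_{i=1}^n \frac{a_i}{\sum_{j \leq i-1} a_j} \leq \tfrac{3}{2}\log\!\lp \sum_{i=0}^n a_i / a_0 \rp$ is available by combining the standard comparison $\sum_i a_i / S_i \leq \log(S_n/S_0)$ (obtained from $\log(1+x) \geq x/(1+x)$ with $S_i = \sum_{j \leq i} a_j$) together with $S_i/S_{i-1} \leq 3/2$, but such refinement is not needed for the statement as written, since the $1/n$ factor on the left already absorbs the crude per-term estimate.
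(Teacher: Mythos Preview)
Your proof is correct for the lemma exactly as stated: each summand is at most $1/2$, so the average is at most $1/2 < \log 2 \leq \log\bigl(\sum_{i=0}^n a_i\bigr)$. This is a genuinely different and much shorter route than the paper's. The paper instead writes $s_i = \sum_{j\le i} a_j$, interprets $\frac{s_i - s_{i-1}}{s_{i-1}}$ as the area of a rectangle under the curve $x \mapsto \tfrac{1}{x-1}$ (using $a_i \le 1$ to ensure $\tfrac{1}{s_{i-1}} \le \tfrac{1}{s_i - 1}$), and bounds the sum by $\int_{s_0}^{s_n} \tfrac{dx}{x-1} = \log(s_n-1) - \log(a_0-1) \le \log s_n$, the last step using $a_0 \ge 2$.

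The key difference in what each argument buys: the paper's integral comparison actually proves the stronger inequality
\[
\sum_{i=1}^n \frac{a_i}{\sum_{j \le i-1} a_j} \;\le\; \log\Bigl(\sum_{i=0}^n a_i\Bigr),
\]
i.e.\ \emph{without} the $1/n$. This stronger form is in fact what is invoked later in the paper (in the proof of Theorem~\ref{theorem:convergence_main_term}, step~(ii)), where the bound is applied to $\sum_i \nu_i^2 / (c_2 + \sum_{j<i}\nu_j^2)$ with no averaging. Your argument relies essentially on the $1/n$ to absorb the crude per-term bound and would not survive its removal; the integral comparison (or the $\log(1+x)\ge x/(1+x)$ variant you mention) is needed for the version the paper actually uses. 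So while your proof is valid for the lemma as written, the $1/n$ in the statement appears to be an artifact, and the paper's argument is doing real work that yours does not.
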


\begin{proof}
    Denoting $s_i = \sum_{j= 0}^{i} a_j$, it follows that
    \begin{align*}
        \frac{1}{n}\sum_{i = 1}^n \frac{a_i}{\sum_{j \leq i-1} a_{j}} &= \frac{1}{n}\sum_{i = 1}^n \frac{s_i - s_{i-1}}{s_{i-1}}.
    \end{align*}
    We now note that $\frac{s_i - s_{i-1}}{s_{i-1}}$ is the area of a rectangle with width $s_i - s_{i-1}$ and height $\frac{1}{s_{i-1}}$. Define the function $f(x) := \frac{1}{x-1}$, which is decreasing on $x$ and 
    \begin{align*}
        f\lp s_{i}\rp = \frac{1}{s_{i} - 1} \stackrel{(i)}{\geq} \frac{1}{(s_{i-1} + 1) - 1} = \frac{1}{s_{i-1}},
    \end{align*}
    where (i) follows from $a_i \in [0,1]$. Thus
    \begin{align*}
        \frac{1}{n}\sum_{i = 1}^n \frac{s_i - s_{i-1}}{s_{i-1}} &\leq \int_{s_0}^{s_n} f(x) dx = \int_{s_0}^{s_n} \frac{1}{x-1} dx = \log (s_n - 1) - \log (a_0 - 1) 
        \\&\stackrel{(i)}{\leq} \log (s_n),
    \end{align*}
    where $(i)$ follows from $a_0 \geq 2$, thus concluding the result.
\end{proof}

\begin{lemma} \label{lemma:average_of_converging_sequence}
    Let $(a_{n})_{n \geq 1}$ be a deterministic sequence such that $a_n \to a$. Then
    \begin{align*}
        \frac{1}{n} \sum_{i \leq n}a_{i} \stackrel{n\to\infty}{\to} a.
    \end{align*}
    Further, if $a_n \to 0$ and $|b_n|<C$, then
    \begin{align*}
        \frac{1}{n} \sum_{i \leq n}a_{i} b_i \stackrel{n\to\infty}{\to} 0.
    \end{align*}
\end{lemma}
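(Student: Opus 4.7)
The plan is to prove both parts by the classical Ces\`aro-type $\varepsilon$-splitting argument: isolate a finite prefix on which the summand is crude but fixed, and a tail on which the summand is uniformly small by hypothesis. Both terms then contribute at most $\varepsilon$ to the average once $n$ is large enough.

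For the first claim, I would fix $\varepsilon > 0$ and, using $a_n \to a$, choose $N$ such that $|a_i - a| \leq \varepsilon/2$ for all $i > N$. Writing
\begin{align*}
\left|\frac{1}{n}\sum_{i \leq n} a_i - a\right| \leq \frac{1}{n}\sum_{i \leq N}|a_i - a| + \frac{1}{n}\sum_{i = N+1}^{n}|a_i - a|,
\end{align*}
the second sum is bounded by $\frac{n-N}{n}\cdot\frac{\varepsilon}{2} \leq \frac{\varepsilon}{2}$, while the first is a fixed finite quantity divided by $n$ and hence can be made $\leq \varepsilon/2$ by taking $n$ large. One minor point to be careful about is that $a$ could be infinite or the $a_i$ unbounded a priori; however, since a convergent sequence is bounded, the prefix sum $\sum_{i \leq N}|a_i - a|$ is finite, so the argument goes through.

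For the second claim, I would run the same splitting with $a=0$, now using the boundedness of $(b_n)$. Fix $\varepsilon > 0$, pick $N$ so that $|a_i| \leq \varepsilon/(2C)$ for $i > N$, and bound
\begin{align*}
\left|\frac{1}{n}\sum_{i \leq n}a_i b_i\right| \leq \frac{C}{n}\sum_{i \leq N}|a_i| + \frac{C}{n}\sum_{i = N+1}^{n}|a_i| \leq \frac{C \sum_{i \leq N}|a_i|}{n} + \frac{\varepsilon}{2}.
\end{align*}
The first term tends to $0$ since the numerator is a fixed finite constant, so for $n$ sufficiently large the total is below $\varepsilon$. Alternatively, the second claim can be derived from the first by noting that $|a_i b_i| \leq C|a_i|$ and applying part one to the nonnegative sequence $C|a_i| \to 0$, then using $\left|\tfrac{1}{n}\sum a_i b_i\right| \leq \tfrac{1}{n}\sum |a_i b_i|$.

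There is no real obstacle here; the result is the standard Ces\`aro mean lemma, and the only thing to watch is keeping the two pieces of the split cleanly separated and observing that any convergent sequence is automatically bounded, so that the prefix term is finite before being divided by $n$.
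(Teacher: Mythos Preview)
Your proposal is correct and follows essentially the same approach as the paper: the standard Ces\`aro $\varepsilon$-splitting for the first claim, and for the second claim the paper uses precisely the alternative you mention at the end (bounding $\left|\tfrac{1}{n}\sum a_i b_i\right| \leq \tfrac{C}{n}\sum |a_i|$ and invoking the first part).
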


\begin{proof}
    Let $\epsilon > 0$. We want to show that there exists $M \in \Nb$ such that 
    \begin{align*}
        \lba  a - \frac{1}{n} \sum_{i \leq n}a_{i} \rba \leq \epsilon. 
    \end{align*}
    \begin{itemize}
        \item Given that $a_n \to a$, there exists $M_1 \in \Nb$ such that $|a_n - a| \leq \frac{\epsilon}{2}$ for all $n \geq M_1$.
        \item Further, there exists $M_2 \in \Nb$ such that $\frac{1}{n}\sum_{i = 1}^{M_1-1} \lba a_i - a \rba \leq \frac{\epsilon}{2}$ for all $n \geq M_2$.
    \end{itemize}
    Taking $M = \max \{ M_1, M_2 \}$, it follows that 
    \begin{align*}
        \lba  a - \frac{1}{n} \sum_{i \leq n}a_{i} \rba  &\leq \frac{1}{n} \sum_{i \leq n} \lba  a - a_{i} \rba 
        \\&= \frac{1}{n} \sum_{i =1}^{M_1-1} \lba  a - a_{i} \rba +  \frac{1}{n} \sum_{i =M_1}^{n} \lba  a - a_{i} \rba 
        \\&\leq \frac{\epsilon}{2} + \frac{1}{n} \sum_{i =M_1}^{n} \frac{\epsilon}{2}
        \leq \frac{\epsilon}{2} + \frac{1}{n} \sum_{i =1}^{n} \frac{\epsilon}{2}
        = \epsilon,
    \end{align*}
    thus concluding the first result.

    The second result trivially follows after observing 
    \begin{align*}
        \lba \frac{1}{n} \sum_{i \leq n}a_{i} b_i \rba  \leq  C\frac{1}{n} \sum_{i \leq n} \lba a_{i} \rba,
    \end{align*}
    and the right hand side converges to $0$ in view of the first result.
\end{proof}

\begin{lemma} \label{lemma:aibiab}
    Let $(a_n)_{n \geq 1}$ and $(b_n)_{n \geq 1}$ be two deterministic sequences such that 
    \begin{align*}
        a_n \stackrel{n\to\infty}{\to} a, \quad b_i \geq 0, \quad \frac{1}{n}\sum_{i=1}^n b_i \stackrel{n\to\infty}{\to} b.
    \end{align*}
    Then,
    \begin{align*}
         \frac{1}{n}\sum_{i=1}^n a_i b_i \stackrel{n\to\infty}{\to} ab.
    \end{align*}
\end{lemma}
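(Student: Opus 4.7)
The plan is to decompose $a_i = a + \epsilon_i$, where $\epsilon_i := a_i - a \to 0$, so that
\[
\frac{1}{n}\sum_{i=1}^n a_i b_i = a \cdot \frac{1}{n}\sum_{i=1}^n b_i \;+\; \frac{1}{n}\sum_{i=1}^n \epsilon_i b_i.
\]
By hypothesis, the first term converges to $ab$, so the whole task reduces to proving that the error term $\frac{1}{n}\sum_{i=1}^n \epsilon_i b_i$ tends to $0$. This is a weighted version of Lemma~\ref{lemma:average_of_converging_sequence}; the nonnegativity of $b_i$ is what lets us carry out the standard splitting argument.

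To bound the error term, I will fix $\epsilon > 0$ and pick $N = N(\epsilon)$ so that $|\epsilon_i| \leq \epsilon$ for all $i \geq N$. Splitting the sum at index $N$ gives
\[
\left|\frac{1}{n}\sum_{i=1}^n \epsilon_i b_i\right| \leq \frac{1}{n}\sum_{i=1}^{N-1} |\epsilon_i|\, b_i \;+\; \frac{\epsilon}{n}\sum_{i=N}^{n} b_i.
\]
The first piece is $\frac{1}{n}$ times a fixed finite quantity (note $\epsilon_i$ and $b_i$ are bounded on $\{1,\dots,N-1\}$ as $\epsilon_i$ is convergent hence bounded, and each $b_i$ is a fixed number), so it tends to $0$ as $n \to \infty$ with $N$ fixed. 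For the second piece, I use $b_i \geq 0$ to extend the sum back to start at $1$, obtaining the upper bound $\epsilon \cdot \frac{1}{n}\sum_{i=1}^n b_i$, which by assumption converges to $\epsilon b$. Therefore
\[
\limsup_{n\to\infty} \left|\frac{1}{n}\sum_{i=1}^n \epsilon_i b_i\right| \leq \epsilon b,
\]
and since $\epsilon > 0$ is arbitrary, the $\limsup$ is $0$, proving the error term vanishes. Combining with the first display then yields $\frac{1}{n}\sum_{i=1}^n a_i b_i \to ab$, as desired.

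There is no serious obstacle here; the only subtlety is that without $b_i \geq 0$ one cannot bound $\sum_{i=N}^n b_i$ by $\sum_{i=1}^n b_i$, so it is essential to invoke that hypothesis when extending the partial sum. The argument is a direct generalization of the Cesàro-type reasoning used in Lemma~\ref{lemma:average_of_converging_sequence}, recovering that lemma as the special case $b_i \equiv 1$.
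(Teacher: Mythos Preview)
Your proof is correct and follows essentially the same approach as the paper: decompose $a_i b_i - ab = (a_i - a)b_i + a\bigl(\tfrac{1}{n}\sum b_i - b\bigr)$, then split the sum $\tfrac{1}{n}\sum (a_i - a)b_i$ at a threshold index and use $b_i \geq 0$ together with the convergence of the Ces\`aro averages to bound the tail. Your use of $\limsup$ is slightly cleaner than the paper's explicit $\epsilon/3$ bookkeeping, but the argument is the same.
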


\begin{proof}
    Let $\epsilon \in (0, 1)$ be arbitrary. It suffices to show that there exists $M\in\Nb$ such that 
    \begin{align*}
        \lba  \frac{1}{n}\sum_{i=1}^n a_i b_i - ab\rba \leq \epsilon 
    \end{align*}
    for all $n \geq M$.
    Given that $a_n \to a$, there exists $M_1 \in \Nb$ such that $\sup_{i \geq M_1} |a_i - a| \leq \frac{\epsilon}{3 (b+1)}$. Furthermore, $\frac{1}{n}\sum_{i=1}^n b_i \to b$ implies the existence of $M_2\in\Nb$ such that
    \begin{align*}
        \lba \frac{1}{n}\sum_{i=1}^n b_i - b \rba \leq \frac{\epsilon}{3 (|a| + 1)}.
    \end{align*}
    Lastly, there exists $M_3 \in\Nb$ such that
    \begin{align*}
         \frac{1}{n}\sup_{i \leq M'-1}\lba a_i - a \rba \sum_{i=1}^{M'-1} b_i \leq \frac{\epsilon}{3},
    \end{align*}
    where $M' = M_1 \vee M_2$. Taking $M = M'\vee M_3$, it follows that
    \begin{align*}
        \lba  \frac{1}{n}\sum_{i=1}^n a_i b_i - ab\rba &= \lba  \frac{1}{n}\sum_{i=1}^n a_i b_i - ab_i + ab_i - ab\rba
        \\&\leq   \frac{1}{n}\sup_{i \leq n}\lba a_i - a \rba \sum_{i=1}^n b_i + |a| \lba\frac{1}{n}\sum_{i=1}^n  b_i - b\rba
        \\&\leq   \frac{1}{n}\sup_{i \leq n}\lba a_i -a\rba \sum_{i=1}^n b_i + |a| \frac{\epsilon}{3 (|a| + 1)}
        \\&\leq   \frac{1}{n}\sup_{i \leq M'-1}\lba a_i -a\rba \sum_{i=1}^{M'-1} b_i+\frac{1}{n}\sup_{M' \leq i \leq n}\lba a_i \rba \sum_{i=M'}^{n} b_i + \frac{\epsilon}{3}
        \\&\leq   \frac{\epsilon}{3} +\sup_{i \geq M'}\lba a_i-a \rba \frac{1}{n} \sum_{i=1}^{n} b_i + \frac{\epsilon}{3}
        \\&\leq   \frac{\epsilon}{3} +\frac{\epsilon}{3 (b+1)} \lp \frac{\epsilon}{3 (|a| + 1)} + b\rp  + \frac{\epsilon}{3}
        \leq   \frac{\epsilon}{3} +\frac{\epsilon}{3}  + \frac{\epsilon}{3}
        = \epsilon.
    \end{align*}
\end{proof}

\begin{lemma} \label{lemma:anibiab}
    Let $(a_{n, i})_{n \geq 1, i \in[n]}$ and $(b_n)_{n \geq 1}$ be two deterministic sequences such that 
    \begin{align*}
        a_{n, n} \stackrel{n\to\infty}{\to} a, \quad |a_{n, i} - a| \leq |a_{i, i} - a|, \quad b_i \geq 0, \quad \frac{1}{n}\sum_{i=1}^n b_i \stackrel{n\to\infty}{\to} b.
    \end{align*}
    Then
    \begin{align*}
         \frac{1}{n}\sum_{i=1}^n a_{n, i} b_i \stackrel{n\to\infty}{\to} ab.
    \end{align*}
\end{lemma}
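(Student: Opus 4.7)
The plan is to mirror the structure of the proof of Lemma~\ref{lemma:aibiab}, exploiting the monotone control hypothesis $|a_{n,i} - a| \leq |a_{i,i} - a|$ to reduce the double-indexed problem to a single-indexed one that is covered by the previous lemma.

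First, I would apply the triangle inequality in the same way as in the proof of Lemma~\ref{lemma:aibiab}, writing
\begin{align*}
    \lba \frac{1}{n}\sum_{i=1}^n a_{n,i} b_i - ab \rba \leq \frac{1}{n}\sum_{i=1}^n |a_{n,i} - a|\, b_i + |a|\lba \frac{1}{n}\sum_{i=1}^n b_i - b \rba.
\end{align*}
The second term on the right-hand side vanishes as $n\to\infty$ by the hypothesis $\frac{1}{n}\sum_{i=1}^n b_i \to b$, so all the work lies in controlling the first term.

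Second, I would use the monotonic control hypothesis to replace $a_{n,i}$ (which depends on both indices) by the diagonal quantity $a_{i,i}$ (which depends on a single index). Since $|a_{n,i}-a| \leq |a_{i,i}-a|$ holds for every $i \leq n$, and $b_i \geq 0$, this gives
\begin{align*}
    \frac{1}{n}\sum_{i=1}^n |a_{n,i}-a|\, b_i \leq \frac{1}{n}\sum_{i=1}^n |a_{i,i}-a|\, b_i.
\end{align*}
Now define $c_i := |a_{i,i} - a|$. The assumption $a_{n,n} \to a$ is precisely the statement that $c_i \to 0$.

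Third, I would invoke the previously proved Lemma~\ref{lemma:aibiab} with the single-indexed sequence $(c_i)_{i\geq 1}$ (which tends to $0$) and the sequence $(b_i)_{i\geq 1}$ (nonnegative, with Ces\`aro average tending to $b$). The conclusion of that lemma yields
\begin{align*}
    \frac{1}{n}\sum_{i=1}^n c_i\, b_i \,\stackrel{n\to\infty}{\to}\, 0\cdot b = 0,
\end{align*}
which combined with the vanishing of the second term above proves the claim.

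There is essentially no obstacle once the correct reduction is identified. The only subtle point is recognizing that the hypothesis $|a_{n,i}-a| \leq |a_{i,i}-a|$ is precisely designed to allow replacement of the row-wise deviations by the diagonal deviations. Without this dominating envelope, the array $(a_{n,i})$ could behave badly off the diagonal for small $i$ and large $n$, but the envelope forces those entries to be no worse than the diagonal, which is under control.
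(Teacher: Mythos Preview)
Your proof is correct and in fact cleaner than the paper's. The paper does not reduce to Lemma~\ref{lemma:aibiab}; instead it repeats the $\epsilon$--$\delta$ argument of that lemma essentially verbatim, choosing thresholds $M_1, M_2, M_3$ and splitting the sum at $M' = M_1 \vee M_2$, then using $|a_{n,i}-a| \leq |a_{i,i}-a|$ inside each piece. Your route is more economical: you recognize that after one application of the triangle inequality and the envelope bound $|a_{n,i}-a|\,b_i \leq |a_{i,i}-a|\,b_i$, the remaining term $\frac{1}{n}\sum_i c_i b_i$ with $c_i := |a_{i,i}-a| \to 0$ is exactly an instance of Lemma~\ref{lemma:aibiab} with limit $0\cdot b = 0$. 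Both approaches use the same ingredients (the envelope hypothesis, nonnegativity of $b_i$, and Ces\`aro convergence of the $b_i$), but yours avoids duplicating the earlier argument and makes the dependence on Lemma~\ref{lemma:aibiab} explicit.
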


\begin{proof}
    Let $\epsilon \in (0, 1)$ be arbitrary. It suffices to show that there exists $M\in\Nb$ such that 
    \begin{align*}
        \lba  \frac{1}{n}\sum_{i=1}^n a_{n, i} b_i - ab\rba \leq \epsilon 
    \end{align*}
    for all $n \geq M$.
    Given that $a_{n, n} \to a$, there exists $M_1 \in \Nb$ such that $\sup_{i \geq M_1} |a_{i, i}- a| \leq \frac{\epsilon}{3 (b+1)}$. Furthermore, $\frac{1}{n}\sum_{i=1}^n b_i \to b$ implies the existence of $M_2\in\Nb$ such that
    \begin{align*}
        \lba \frac{1}{n}\sum_{i=1}^n b_i - b \rba \leq \frac{\epsilon}{3 (|a| + 1)}.
    \end{align*}
    Lastly, there exists $M_3 \in\Nb$ such that
    \begin{align*}
         \frac{1}{n}\sup_{i \leq M'-1}\lba a_{i, i} - a \rba \sum_{i=1}^{M'-1} b_i \leq \frac{\epsilon}{3},
    \end{align*}
    where $M' = M_1 \vee M_2$. Taking $M = M'\vee M_3$, it follows that
    \begin{align*}
        \lba  \frac{1}{n}\sum_{i=1}^n a_{n, i} b_i - ab\rba &= \lba  \frac{1}{n}\sum_{i=1}^n a_{n, i} b_i - ab_i + ab_i - ab\rba
        \\&\leq   \frac{1}{n}\sup_{i \leq n}\lba a_{n, i} - a \rba \sum_{i=1}^n b_i + |a| \lba\frac{1}{n}\sum_{i=1}^n  b_i - b\rba
        \\&\leq   \frac{1}{n}\sup_{i \leq n}\lba a_{i, i} -a\rba \sum_{i=1}^n b_i + |a| \frac{\epsilon}{3 (|a| + 1)}
        \\&\leq   \frac{1}{n}\sup_{i \leq M'-1}\lba a_{i, i} -a\rba \sum_{i=1}^{M'-1} b_i+\frac{1}{n}\sup_{M' \leq i \leq n}\lba a_i \rba \sum_{i=M'}^{n} b_i + \frac{\epsilon}{3}
        \\&\leq   \frac{\epsilon}{3} +\sup_{i \geq M'}\lba a_{i,i}-a \rba \frac{1}{n} \sum_{i=1}^{n} b_i + \frac{\epsilon}{3}
        \\&\leq   \frac{\epsilon}{3} +\frac{\epsilon}{3 (b+1)} \lp \frac{\epsilon}{3 (|a| + 1)} + b\rp  + \frac{\epsilon}{3}
        \leq   \frac{\epsilon}{3} +\frac{\epsilon}{3}  + \frac{\epsilon}{3}
        = \epsilon.
    \end{align*}
\end{proof}

\begin{lemma} \label{lemma:triangular_lemma}
    Let $(a_{n, i})_{n \geq 1, i \in [n]}$ such that $a_{n, i} \geq 0$,  $\sum_{i = 1}^n a_{n, i} \leq C$ for some $C < \infty$, 
    \begin{align*}
        a_{n, i} \stackrel{n \to \infty}{\to} 0 \quad \forall i \geq 1,
    \end{align*}
    and $(b_n)_{n \geq 1}$ such that $b_{n} \stackrel{n \to \infty}{\to} 0$. Then,
    \begin{align*}
        \sum_{i=1}^n a_{n, i} b_i \stackrel{n \to \infty}{\to} 0.
    \end{align*}
\end{lemma}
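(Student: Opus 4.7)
The plan is to prove this by a standard ``split the sum at a well-chosen threshold'' argument, which is the usual way to handle convergence for triangular arrays of this form. The key resources are: (i) the uniform-in-$n$ bound $\sum_{i=1}^n a_{n,i} \leq C$; (ii) the tail smallness of $b_i$ (since $b_n \to 0$); and (iii) the pointwise convergence $a_{n,i} \to 0$ for each fixed $i$, which will control the finitely-many ``head'' terms.

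First I would fix an arbitrary $\varepsilon > 0$ and note that since $b_n \to 0$, the sequence $(b_n)$ is bounded, say by some $B \geq 0$. If $C = 0$, the conclusion is trivial, so assume $C > 0$. By $b_n \to 0$, choose $N_1 \in \mathbb{N}$ such that $|b_i| \leq \varepsilon/(2C)$ for all $i \geq N_1$. Then I would split
\begin{align*}
    \left| \sum_{i=1}^n a_{n,i} b_i \right| \leq \sum_{i=1}^{N_1 - 1} a_{n,i} |b_i| + \sum_{i=N_1}^{n} a_{n,i} |b_i|.
\end{align*}

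The tail is easy: since $a_{n,i} \geq 0$ and $\sum_{i=1}^n a_{n,i} \leq C$, the second sum is bounded by $\frac{\varepsilon}{2C} \cdot \sum_{i=N_1}^n a_{n,i} \leq \varepsilon/2$. For the head, $N_1$ is a fixed finite integer, so the assumption $a_{n,i} \to 0$ for each $i$ yields $\sum_{i=1}^{N_1-1} a_{n,i} \to 0$ as $n \to \infty$; hence there is $N_2$ such that for $n \geq N_2$, $B \sum_{i=1}^{N_1-1} a_{n,i} \leq \varepsilon/2$. Taking $N = \max(N_1, N_2)$ gives $|\sum_{i=1}^n a_{n,i} b_i| \leq \varepsilon$ for all $n \geq N$, concluding the proof.

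There is no real obstacle here; the only subtlety is respecting the order in which the parameters are chosen (first fix $N_1$ from the decay of $b_i$, then fix $N_2$ based on the pointwise decay of $a_{n,i}$ for the now-fixed finite head), since the pointwise convergence of $a_{n,i}$ is not uniform in $i$ and so cannot be applied before the head length is frozen.
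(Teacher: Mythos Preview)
Your proof is correct and follows essentially the same split-at-a-threshold argument as the paper: first choose a cutoff index from the decay of $b_i$, bound the tail via $\sum_i a_{n,i}\leq C$, and then control the finitely many head terms using the pointwise convergence $a_{n,i}\to 0$. The only cosmetic differences are that you explicitly invoke a uniform bound $B$ on $|b_i|$ for the head (the paper instead argues directly that each $a_{n,i}b_i\to 0$) and that you separately dispose of the trivial case $C=0$.
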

\begin{proof}
    Let $\epsilon > 0$. We want to show that there exists $M \in \Nb$ such that $\sum_{i = 1}^n a_{n, i} b_i \leq \epsilon$ for all $n \geq M$.
    \begin{itemize}
        \item Given that $b_n \to 0$, there exists $M_1 \in \Nb$ such that $|b_n| \leq \frac{\epsilon}{2C}$ for all $n > M_1$.
        \item Further, there exists $M_2 \in \Nb$ such that $\sum_{i=1}^{M_1} a_{n, i} b_i \leq \frac{\epsilon}{2}$ for all $n \geq M_2$. Such an $M_2$ exists, as it suffices to take $M_2 = \max \{ M_{2, i}: i \in [M_1]\}$, where $M_{2, i}$ is such that $a_{n, i}b_i \leq \frac{\epsilon}{2  M_1}$ (whose existence is granted by $a_{n, i} \to 0$ as $n \to \infty$ for any fixed $i$).
    \end{itemize}
    Taking $M = \max \{ M_1, M_2 \}$, it follows that 
    \begin{align*}
        \sum_{i=1}^n a_{n, i} b_i &= \sum_{i=1}^{M_1} a_{n, i} b_i + \sum_{i=M_1+1}^n a_{n, i} b_i
        \\&\leq \frac{\epsilon}{2} + \frac{\epsilon}{2C}\sum_{i=M_1+1}^n a_{n, i}
        \\&\stackrel{(i)}{\leq} \frac{\epsilon}{2} + \frac{\epsilon}{2C}\sum_{i=1}^n a_{n, i}
        \stackrel{}{\leq} \frac{\epsilon}{2} + \frac{\epsilon}{2C}C
        = \epsilon,
    \end{align*}
    where $(i)$ follows from $a_{n, i} \geq 0$, thus concluding the result.
\end{proof}

\begin{lemma} \label{lemma:sequence_lower_bounded_almost_surely}
    Let $a >0$ and $b >0$. If $Z_n > 0$ a.s. and $Z_n \to b$ a.s., then
    \begin{align*}
        \inf_{n \geq 1} \frac{a}{n + 1} + \frac{n}{n+1} Z_n
    \end{align*}
    is strictly positive almost surely.
\end{lemma}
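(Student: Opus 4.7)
The plan is to work $\omega$-wise on the probability-one event $\Omega_0$ on which both $Z_n \to b$ and $Z_n > 0$ for all $n \geq 1$ hold, and to bound the infimum separately on a finite initial segment and on the tail.

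First I would use the convergence $Z_n(\omega) \to b > 0$ to pick (a random) $N(\omega) \geq 1$ such that $Z_n(\omega) \geq b/2$ for all $n \geq N(\omega)$. For any such $n$, since $n/(n+1) \geq 1/2$ for $n \geq 1$ and $a/(n+1) > 0$,
\begin{align*}
    \frac{a}{n+1} + \frac{n}{n+1} Z_n(\omega) \;\geq\; \frac{n}{n+1}\cdot \frac{b}{2} \;\geq\; \frac{b}{4}.
\end{align*}
Hence $\inf_{n \geq N(\omega)} W_n(\omega) \geq b/4$, where $W_n := \frac{a}{n+1} + \frac{n}{n+1} Z_n$.

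Next I would handle the finite initial segment $1 \leq n < N(\omega)$. Since $a > 0$ and $Z_n(\omega) > 0$ for every $n$, each $W_n(\omega) > 0$, and a minimum over finitely many strictly positive numbers is strictly positive; call it $m(\omega) > 0$. Combining with the tail bound gives $\inf_{n \geq 1} W_n(\omega) \geq \min\{m(\omega), b/4\} > 0$. Since $P(\Omega_0) = 1$, the stated infimum is strictly positive almost surely.

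There is no real obstacle here; the only thing to be slightly careful about is that the cutoff $N$ depends on $\omega$, which is why we cannot get a single deterministic lower bound but only an a.s.\ positive one, as claimed.
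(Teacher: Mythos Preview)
Your proof is correct and follows essentially the same approach as the paper: work pointwise on the almost-sure event, use convergence to handle the tail $n \geq N(\omega)$, and use that a minimum over finitely many positive numbers is positive for the initial segment. The only cosmetic differences are that the paper bounds $n/(n+1)$ below by $m/(m+1)$ rather than your $1/2$, and bounds the initial segment directly via $a/(n+1) \geq a/m$ instead of invoking the finite minimum.
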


\begin{proof}
     Given that $Z_n > 0$ a.s. and $Z_n \to b$ a.s., there exists $A \in \mathcal{F}$ such that $P(A) = 1$ and $Z_n(\omega) > 0$ for all $n$, as well as $Z_n(\omega) \to b$ with $n \to \infty$, for all $\omega \in A$. It suffices to show that, for $\omega \in A$, 
    \begin{align*}
        \inf_{n \geq 1} \frac{a}{n + 1} + \frac{n}{n+1} Z_n(\omega) > 0.
    \end{align*}
    In order to see this, observe that $Z_n(\omega) \to b$ implies that there exists $m \in \Nb$ such that $Z_n > \frac{b}{2}$ for all $n \geq m$. Given that the function $x \mapsto x / (x + 1)$ is increasing on $n$, for all $n \geq m$, 
    \begin{align*}
        \frac{a}{n + 1} + \frac{n}{n+1} Z_n(\omega) \geq \frac{n}{n+1} Z_n(\omega) \geq \frac{m}{m+1} Z_n(\omega) \geq \frac{m}{m+1} \frac{b}{2}.
    \end{align*}
    If $Z_n(w) > 0$, then for all $n < m$, 
    \begin{align*}
        \frac{a}{n + 1} + \frac{n}{n+1} Z_n(\omega) \geq  \frac{a}{n + 1} \geq \frac{a}{m}.
    \end{align*}
    From these two inequalities, we conclude that
    \begin{align*}
        \inf_{n \geq 1} \frac{a}{n + 1} + \frac{n}{n+1} Z_n(\omega) \geq \frac{a}{m} \wedge  \frac{m}{m+1} \frac{b}{2}
    \end{align*}
    for all $\omega \in A$.
\end{proof}

\section{Auxiliary propositions} \label{section:auxiliary_propositions}

The proofs of the propositions exhibited herein are deferred to Appendix~\ref{appendix:auxiliary_propositions_proofs}. We start by presenting a proof of the almost sure convergence of the fourth moment estimator used throughout. Its proof can be found in Appendix~\ref{proof:fourth_moment_as_convergence}

\begin{proposition} \label{proposition:fourth_moment_as_convergence}
    Let $X_1, \ldots, X_n$ fulfill Assumption~\ref{ass:main_assumption} and Assumption~\ref{assumption:optimal_widths}. Let $(\widehat\mu_i)_{i \in [n]}$ and $(\widehat\sigma_i^2)_{i \in [n]}$ be $[0, 1]$-valued predictable sequences. If
    \begin{align*}
        \widehat\mu_n \stackrel{a.s.}{\to} \mu, \quad \widehat\sigma_n^2  \stackrel{a.s.}{\to} \sigma^2,
    \end{align*}
    then
    \begin{align*}
        \frac{1}{n} \sum_{i = 1}^n \lb (X_i - \widehat\mu_i)^2 - \widehat\sigma_i^2 \rb^2 \stackrel{a.s.}{\to} \V \lb(X-\mu)^2 \rb,
    \end{align*}
    which implies
    \begin{align*}
        \widehat m_{4, n}^2 \stackrel{a.s.}{\to} \V \lb(X-\mu)^2 \rb.
    \end{align*}
\end{proposition}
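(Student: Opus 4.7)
The plan is to decompose
\begin{align*}
(X_i - \widehat\mu_i)^2 - \widehat\sigma_i^2 = a_i + b_i,
\end{align*}
where $a_i \defined (X_i - \mu)^2 - \sigma^2$ is the ``oracle'' bounded martingale difference and $b_i \defined \lb (X_i - \widehat\mu_i)^2 - (X_i - \mu)^2\rb + \lb \sigma^2 - \widehat\sigma_i^2\rb$ is a vanishing perturbation. Expanding
\begin{align*}
\lb (X_i - \widehat\mu_i)^2 - \widehat\sigma_i^2\rb^2 = a_i^2 + 2 a_i b_i + b_i^2,
\end{align*}
I would handle each of the three Cesàro averages separately.

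For the main term, Assumption~\ref{ass:main_assumption} gives $\E_{i-1} a_i = 0$, and Assumption~\ref{assumption:optimal_widths} gives that $\E_{i-1} a_i^2 = \V_{i-1}\lb(X_i-\mu)^2\rb$ is constant in $i$ and equals $\V\lb (X-\mu)^2\rb$ (by the tower property, using that $\E_{i-1}(X_i-\mu)^2 = \sigma^2$ is nonrandom). Since $a_i^2 \in [0,1]$, the centered sequence $a_i^2 - \E_{i-1} a_i^2$ is a bounded martingale difference sequence, so Azuma--Hoeffding yields $P(|\sum_{i \leq n}(a_i^2 - \E_{i-1}a_i^2)| \geq n\epsilon) \leq 2 e^{-n\epsilon^2/2}$, which is summable. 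Borel--Cantelli applied along a countable dense set of $\epsilon>0$ then gives $\frac{1}{n}\sum_{i \leq n} a_i^2 \cas \V\lb (X-\mu)^2\rb$.

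For the remaining two terms, the algebraic identity
\begin{align*}
(X_i - \widehat\mu_i)^2 - (X_i - \mu)^2 = (\widehat\mu_i - \mu)(\widehat\mu_i + \mu - 2 X_i),
\end{align*}
combined with $X_i, \widehat\mu_i, \mu \in [0,1]$, implies $|b_i| \leq 2|\widehat\mu_i - \mu| + |\widehat\sigma_i^2 - \sigma^2|$, which tends to $0$ almost surely by the hypothesis of the proposition. Since also $|a_i| \leq 1$, both $a_i b_i$ and $b_i^2$ converge almost surely to $0$, so Lemma~\ref{lemma:average_of_converging_sequence} gives $\frac{1}{n}\sum a_i b_i \cas 0$ and $\frac{1}{n}\sum b_i^2 \cas 0$. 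Adding the three limits proves the first claim. The second is immediate from $\widehat m_{4,n}^2 = \frac{c_2}{n} + \frac{n-1}{n}\cdot \frac{1}{n-1}\sum_{i\leq n-1}\lb(X_i-\widehat\mu_i)^2 - \widehat\sigma_i^2\rb^2$, with the constant term vanishing and the factor $\tfrac{n-1}{n}\to 1$.

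The main obstacle is the martingale strong law of large numbers for $a_i^2$: one must check the exponential tail bound is summable so that Borel--Cantelli applies, and that this can be upgraded to an almost-sure limit through a countable exhaustion of $\epsilon$. Everything else is routine algebraic manipulation and Cesàro-type averaging via Lemma~\ref{lemma:average_of_converging_sequence}.
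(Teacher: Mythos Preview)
Your proof is correct and follows essentially the same strategy as the paper: decompose $(X_i-\widehat\mu_i)^2-\widehat\sigma_i^2$ into the oracle martingale difference $(X_i-\mu)^2-\sigma^2$ plus a vanishing perturbation, handle the main term by a martingale strong law, and dispatch the cross and square perturbation terms via Lemma~\ref{lemma:average_of_converging_sequence}. The only notable difference is that the paper cites the martingale SLLN of \citet[Theorem~2.1]{hall2014martingale} directly, whereas you give a self-contained argument via Azuma--Hoeffding and Borel--Cantelli; your route is slightly more elementary but otherwise equivalent.
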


If $\V \lb(X-\mu)^2 \rb = 0$, then the fourth moment estimator does not only converge to $0$ almost surely, but it also does it at a $\tilde\bigO (\frac{1}{t})$ rate. We start by formalizing this result when  $(\widehat m_{4, i}^2)_{i \in [n]}$ is defined as in Section~\ref{section:upper_bound}. Its proof may be found in Appendix~\ref{proof:zero_fourth_moment_scales_as_one_over_upper_bound}  

\begin{proposition} \label{proposition:zero_fourth_moment_scales_as_one_over_upper_bound}
    Let $X_1, \ldots, X_n$ fulfill Assumption~\ref{ass:main_assumption} and Assumption~\ref{assumption:optimal_widths} such that $\V \lb(X-\mu)^2 \rb = 0$. Let $(\widehat\mu_i)_{i \in [n]}$, $(\widehat\sigma_i^2)_{i \in [n]}$, and $(\widehat m_{4, i}^2)_{i \in [n]}$ be defined as in Section~\ref{section:upper_bound}. Then
    \begin{align*}
        \widehat m_{4, t}^2 = \tilde\bigO \lp \frac{1}{t}\rp
    \end{align*}
    almost surely.
\end{proposition}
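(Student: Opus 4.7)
The plan is to exploit the fact that $\V[(X-\mu)^2]=0$ combined with Assumption~\ref{assumption:optimal_widths} forces $(X_i-\mu)^2=\sigma^2$ almost surely for every $i$ (since conditionally the random variable has zero variance and mean $\sigma^2$). With this pointwise identity in hand, the strategy is to expand $(X_i-\widehat\mu_i)^2-\widehat\sigma_i^2$ into three pieces whose squared sums can be bounded separately, and then divide by $t$.

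First, I would derive almost-sure polylogarithmic-in-$t$ control on $|\bar\mu_t-\mu|$ and on $|\widehat\sigma_t^2-\sigma^2|$. Since $\bar\mu_t-\mu = (c_4-\mu+\sum_{i\le t-1}(X_i-\mu))/t$ and $(X_i-\mu)$ is a bounded martingale difference sequence with conditional variance $\sigma^2$, the martingale law of the iterated logarithm yields $|\bar\mu_t-\mu|=\widetilde{\bigO}(t^{-1/2})$ almost surely, hence $(\bar\mu_t-\mu)^2=\widetilde{\bigO}(1/t)$. For $\widehat\sigma_t^2-\sigma^2$, I would expand $(X_i-\bar\mu_i)^2 = \sigma^2 + 2(X_i-\mu)(\mu-\bar\mu_i) + (\bar\mu_i-\mu)^2$. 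The cross term is a martingale with predictable quadratic variation of order $\sum_{i\le t-1}(\bar\mu_i-\mu)^2=\widetilde{\bigO}(\log t)$ (by the bound on $(\bar\mu_i-\mu)^2$), so martingale LIL gives the cross sum is $\widetilde{\bigO}(\sqrt{\log t})$; the squared term contributes $\widetilde{\bigO}(\log t)$; and the leading $(t-1)\sigma^2$ exactly cancels $t\sigma^2$ up to constants. Dividing by $t$ yields $|\widehat\sigma_t^2-\sigma^2|=\widetilde{\bigO}(1/t)$ almost surely.

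Then I would plug in the decomposition
\[
(X_i-\widehat\mu_i)^2-\widehat\sigma_i^2 = (\sigma^2-\widehat\sigma_i^2) + 2(X_i-\mu)(\mu-\widehat\mu_i) + (\widehat\mu_i-\mu)^2,
\]
square using $(a+b+c)^2\le 3(a^2+b^2+c^2)$, and sum over $i\le t-1$. The first summand contributes $\sum \widetilde{\bigO}(1/i^2)=\bigO(1)$; the third contributes $\sum\widetilde{\bigO}(1/i^2)=\bigO(1)$; and the middle, using $(X_i-\mu)^2=\sigma^2$ and $(\mu-\widehat\mu_i)^2=\widetilde{\bigO}(1/i)$, contributes $\sum\widetilde{\bigO}(1/i)=\widetilde{\bigO}(\log t)$. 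Therefore the numerator of $\widehat m_{4,t}^2$ is $\widetilde{\bigO}(\log t)$, and dividing by $t$ gives $\widehat m_{4,t}^2=\widetilde{\bigO}(1/t)$.

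The main obstacle is obtaining \emph{almost-sure} rates (as opposed to in-expectation or in-probability) for $\bar\mu_t-\mu$ and $\widehat\sigma_t^2-\sigma^2$ without independence. This is why I would invoke the martingale LIL, which applies under the bounded increments and conditional variance control provided by Assumption~\ref{ass:main_assumption}; the polylogarithmic slack it introduces is exactly what gets absorbed into the $\widetilde{\bigO}$ notation. The rest of the argument is bookkeeping: tracking how the $\widetilde{\bigO}$ factors compose under summation and recognizing that the dominant contribution comes from the cross-term $\sum (X_i-\mu)(\mu-\widehat\mu_i)$, which produces the $\log t$ factor that keeps the final bound at $\widetilde{\bigO}(1/t)$ rather than $\bigO(1/t)$.
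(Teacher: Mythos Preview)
Your proposal is correct and follows essentially the same route as the paper: both exploit $(X_i-\mu)^2=\sigma^2$ a.s., use the identical decomposition of $(X_i-\widehat\mu_i)^2-\widehat\sigma_i^2$, and invoke Stout's martingale LIL to control $|\bar\mu_i-\mu|$. The paper differs in two minor respects. First, it explicitly separates the degenerate case $\sigma^2=0$ (where $X_i\equiv\mu$, everything is deterministic, and one obtains $\bigO(1/t)$ without polylog factors). Second, for $\sigma^2>0$ it bounds $|\sigma^2-\widehat\sigma_i^2|$ more crudely as $\tilde\bigO(1/\sqrt{i})$ by simply using $|2(X_j-\mu)(\mu-\bar\mu_j)+(\mu-\bar\mu_j)^2|\le 3|\mu-\bar\mu_j|$ and summing $|\mu-\bar\mu_j|=\tilde\bigO(1/\sqrt{j})$ via the triangle inequality, rather than invoking a second LIL on the cross sum to obtain your sharper $\tilde\bigO(1/i)$. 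Your refinement is valid but does not improve the final rate, since in both arguments the dominant contribution to $\sum_{i\le t-1}\nu_i^2$ is the middle term $4\sigma^2(\mu-\widehat\mu_i)^2=\tilde\bigO(1/i)$, summing to $\tilde\bigO(\log t)$; the paper's cruder intermediate bound is therefore simpler and already suffices.
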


The result also extends to the estimator $(\widehat m_{4, i}^2)_{i \in [n]}$ defined in Section~\ref{section:lower_bound}. We present such an extension next, whose proof we defer to Appendix~\ref{proof:zero_fourth_moment_scales_as_one_over_lower_bound}.

\begin{proposition} \label{proposition:zero_fourth_moment_scales_as_one_over_lower_bound}
    Let $X_1, \ldots, X_n$ fulfill Assumption~\ref{ass:main_assumption} and Assumption~\ref{assumption:optimal_widths} such that $\V \lb(X-\mu)^2 \rb = 0$. Let $(\widehat\mu_i)_{i \in [n]}$, $(\widehat\sigma_i^2)_{i \in [n]}$, and $(\widehat m_{4, i}^2)_{i \in [n]}$  be defined as in Section~\ref{section:lower_bound}. If  $\log (1 / \alpha_{1, n}) = \tilde\bigO(1)$ and $0 < \alpha_{1, n} \leq \alpha$, then
    \begin{align*}
        \widehat m_{4, t}^2 = \tilde\bigO \lp \frac{1}{t}\rp
    \end{align*}
    almost surely.
\end{proposition}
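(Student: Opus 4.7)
The strategy is to mimic the proof of Proposition~\ref{proposition:zero_fourth_moment_scales_as_one_over_upper_bound}. Inspecting Section~\ref{section:lower_bound}, the only change from the upper-bound setting is the definition of $\widehat\mu_i$: it is now the Bennett-weighted average from \eqref{eq:widehat_mu_definition} rather than the running mean $\bar\mu_i$. The estimator $\widehat\sigma_i^2$ continues to be the Section~\ref{section:upper_bound} running-average construction based on $\bar\mu_j$'s. Consequently, the only step that requires genuinely new work is the control of $(\widehat\mu_i - \mu)^2$ in this Bennett-weighted regime; the rest of the proof of Proposition~\ref{proposition:zero_fourth_moment_scales_as_one_over_upper_bound} can be reused verbatim.

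The key algebraic step exploits the degeneracy: $\V_{i-1}[(X_i-\mu)^2]=0$ combined with $\E_{i-1}X_i=\mu$ forces $(X_i-\mu)^2=\sigma^2$ almost surely for every $i$. Expanding
\begin{align*}
(X_i-\widehat\mu_i)^2 - \widehat\sigma_i^2 = (\sigma^2 - \widehat\sigma_i^2) + 2(X_i-\mu)(\mu-\widehat\mu_i) + (\widehat\mu_i - \mu)^2,
\end{align*}
squaring via $(a+b+c)^2 \leq 3(a^2+b^2+c^2)$ and using $(X_i-\mu)^2 \leq 1$ gives
\begin{align*}
\lb (X_i-\widehat\mu_i)^2 - \widehat\sigma_i^2 \rb^2 \leq 3(\sigma^2 - \widehat\sigma_i^2)^2 + 12(\widehat\mu_i-\mu)^2 + 3(\widehat\mu_i-\mu)^4.
\end{align*}
The first summand is $\widetilde{\bigO}(1/i)$ a.s.\ by the same computation as in Proposition~\ref{proposition:zero_fourth_moment_scales_as_one_over_upper_bound} (since $\widehat\sigma_i^2$ is identical there), so the task reduces to proving $(\widehat\mu_i-\mu)^2 = \widetilde{\bigO}(1/i)$ a.s., which automatically implies $(\widehat\mu_i-\mu)^4 = \widetilde{\bigO}(1/i^2)$. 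To this end, apply Theorem~\ref{theorem:bennett_anytimevalid} at level $\alpha_{1,n}$ to get, uniformly in $t$,
\begin{align*}
|\widehat\mu_t - \mu| \leq \frac{\log(2/\alpha_{1,n}) + \sigma^2 \sum_{j\leq t-1}\psi_P(\tilde\lambda_j)}{\sum_{j\leq t-1}\tilde\lambda_j}.
\end{align*}
The a.s.\ convergence $\widehat\sigma_t^2 \to \sigma^2$ (a by-product of the Proposition~\ref{proposition:zero_fourth_moment_scales_as_one_over_upper_bound} argument) together with \eqref{eq:lambda_tilde_definition} yields $\tilde\lambda_t = \Theta(1/\sqrt{t\log(1+t)})$ a.s. Integral comparison in the spirit of Lemmas~\ref{lemma:series_one_over_sqrt} and \ref{lemma:series_one_over_ilogi} then gives $\sum_{j\leq t-1}\tilde\lambda_j = \Theta(\sqrt{t/\log t})$ and, via $\psi_P(\lambda) \leq \lambda^2$ for small $\lambda$ (Lemma~\ref{lemma:psi_p_decoupled}), $\sum_{j\leq t-1}\psi_P(\tilde\lambda_j) = \widetilde{\bigO}(1)$. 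Combined with the hypothesis $\log(1/\alpha_{1,n}) = \widetilde{\bigO}(1)$, the Bennett bound collapses to $(\widehat\mu_t - \mu)^2 = \widetilde{\bigO}(\log t / t) = \widetilde{\bigO}(1/t)$. Summing the three terms across $i \leq t-1$ then gives $\sum_{i\leq t-1}\lb (X_i - \widehat\mu_i)^2 - \widehat\sigma_i^2 \rb^2 = \widetilde{\bigO}(1)$, hence $\widehat m_{4,t}^2 = (c_2 + \widetilde{\bigO}(1))/t = \widetilde{\bigO}(1/t)$.

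The main obstacle is promoting the high-probability Bennett bound into an almost-sure statement. The cleanest fix is to apply Theorem~\ref{theorem:bennett_anytimevalid} along a summable sequence $\delta_k \downarrow 0$ (e.g.\ $\delta_k = k^{-2}$) and invoke Borel--Cantelli: almost every $\omega$ eventually lies in the good event at some level $\delta_{k(\omega)}$, and the resulting $\log(2/\delta_{k(\omega)})$ factor is an $\omega$-dependent constant absorbed into the $\widetilde{\bigO}$. Alternatively, since $X_i - \mu$ is Rademacher with deterministic conditional variance under the degeneracy, a martingale law of the iterated logarithm applied directly to $\sum_{j\leq t-1}\tilde\lambda_j (X_j - \mu)$ yields the same rate almost surely, bypassing Bennett entirely.
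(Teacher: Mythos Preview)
Your reduction is correct and matches the paper's: once you establish $(\widehat\mu_i-\mu)^2=\widetilde{\bigO}(1/i)$ almost surely for the Bennett-weighted $\widehat\mu_i$ of \eqref{eq:widehat_mu_definition}, the rest of the proof of Proposition~\ref{proposition:zero_fourth_moment_scales_as_one_over_upper_bound} carries over verbatim, since $\widehat\sigma_i^2$ is unchanged. The paper, however, takes precisely the route you label ``alternative'' in your last sentence: it applies Stout's martingale law of the iterated logarithm directly to $\sum_{j}\iota_j(X_j-\mu)$ with $\iota_j=\sqrt{2/(\widehat\sigma_j^2\, j\log(1+j))}$, shows this partial sum is $\widetilde{\bigO}(1)$ a.s., transfers this to $\sum_j\tilde\lambda_j(X_j-\mu)$, and divides by $\sum_j\tilde\lambda_j=\widetilde\Omega(\sqrt t)$. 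Your primary proposal---run the Bennett confidence sequence at levels $\delta_k\downarrow 0$ and pass to the a.s.\ limit---is a genuinely different and valid argument that trades the LIL machinery for an $\omega$-dependent constant $\log(2/\delta_{k(\omega)})$; your estimates $\sum_j\tilde\lambda_j=\widetilde\Omega(\sqrt t)$ and $\sum_j\psi_P(\tilde\lambda_j)=\widetilde{\bigO}(1)$ are right. The paper's LIL route has the advantage that it cleanly handles the situation where the $\tilde\lambda_j$ themselves carry an $n$-dependent factor $\sqrt{\log(2/\alpha_{1,n})}$ (which is why the hypothesis $\log(1/\alpha_{1,n})=\widetilde{\bigO}(1)$ appears), whereas your Bennett argument treats the $\tilde\lambda_j$ as fixed.

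Two small gaps. First, you should split off the case $\sigma^2=0$: then $X_i\equiv\mu$, so $\widehat\mu_t=\mu$ exactly for $t\geq2$, and the Bennett bound is not needed---which is fortunate, since your $\Theta(1/\sqrt{t\log(1+t)})$ estimate for $\tilde\lambda_t$ relies on $\widehat\sigma_t^2$ being bounded away from zero and fails when $\widehat\sigma_t^2\to0$. The paper makes this split explicit. Second, ``Borel--Cantelli'' is more than you need: since the good events $A_k$ are increasing in $k$ (larger $\log(2/\delta_k)$ gives wider bounds), $P(\bigcup_k A_k)=\lim_k P(A_k)=1$ already yields the a.s.\ statement without summability.
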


If $\V \lb(X-\mu)^2 \rb = 0$, the (normalized) sum of the plug-ins also converges almost surely to a tractable quantity. We present this result next, and defer the proof to Appendix~\ref{proof:sum_lambda_rescaled_converges}.

\begin{proposition} \label{proposition:sum_lambda_rescaled_converges}
    Let $X_1, \ldots, X_n$ fulfill Assumption~\ref{ass:main_assumption} and Assumption~\ref{assumption:optimal_widths} such that $\V \lb (X_i-\mu)^2\rb > 0$. Let $(\delta_n)_{n \geq 1}$ be a deterministic sequence such that
    \begin{align*}
        \delta_n \to \delta > 0, \quad \delta_n >0.
    \end{align*}
    Define
    \begin{align*}
        \lambda_{t,\delta_n} := \sqrt\frac{2\log(1/\delta_n)}{\widehat m_{4, t}^2 n} \wedge c_1,
    \end{align*}
    with $c_1 \in (0, 1]$ and $\widehat m_{4, t}^2$ defined as in Section~\ref{section:main_results} with $c_2 > 0$. Then
    \begin{align*}
        \frac{1}{\sqrt{n}} \sum_{i = 1}^n \lambda_{i,\delta_n} \stackrel{a.s.}{\to} \sqrt \frac{2 \log (1 / \delta)}{\V \lb (X_i-\mu)^2\rb}.
    \end{align*}
\end{proposition}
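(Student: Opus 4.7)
The plan is to combine two ingredients: the almost-sure convergence $\widehat m_{4, i}^2 \to V := \V\lb (X-\mu)^2\rb > 0$ granted by Proposition~\ref{proposition:fourth_moment_as_convergence}, and the observation that the truncation by $c_1$ in $\lambda_{i,\delta_n}$ becomes asymptotically inactive, so that the sum is essentially a Cesàro average of $\sqrt{2\log(1/\delta_n)/\widehat m_{4, i}^2}$.

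First I would absorb the $1/\sqrt n$ factor into the minimum by rewriting
\begin{align*}
    \frac{1}{\sqrt n}\sum_{i=1}^n \lambda_{i,\delta_n} \;=\; \frac{1}{n}\sum_{i=1}^n \lp \sqrt\frac{2\log(1/\delta_n)}{\widehat m_{4, i}^2} \wedge \sqrt n\, c_1\rp,
\end{align*}
which exhibits the Cesàro structure. Since $\delta_n \to \delta > 0$, the prefactor $\sqrt{2\log(1/\delta_n)}$ converges to $\sqrt{2\log(1/\delta)}$, so it suffices to show that the Cesàro mean above converges almost surely to $\sqrt{2\log(1/\delta)/V}$. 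To apply Proposition~\ref{proposition:fourth_moment_as_convergence}, I would first note that the predictable sequences $(\widehat\mu_i)$ and $(\widehat\sigma_i^2)$ prescribed in Section~\ref{section:main_results} converge almost surely to $\mu$ and $\sigma^2$ by standard SLLN-type results for bounded martingale differences.

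Next I would fix a sample path $\omega$ in the probability-one event on which $\widehat m_{4, i}^2(\omega) \to V$, and pick $M = M(\omega)$ large enough that $\widehat m_{4, i}^2(\omega) \geq V/2$ for all $i \geq M$. Since $\sqrt{2\log(1/\delta_n)/(V/2)}$ stays bounded while $\sqrt n\, c_1 \to \infty$, there exists $N = N(\omega)$ such that for every $n \geq N$ and every $i \in \{M,\ldots,n\}$ the truncation is inactive and the $i$-th summand equals $\sqrt{2\log(1/\delta_n)/\widehat m_{4, i}^2}$. The first $M-1$ summands are each bounded by $\sqrt n\, c_1$, contributing at most $(M-1)c_1/\sqrt n \to 0$ to the Cesàro average. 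For the remaining tail, $1/\sqrt{\widehat m_{4, i}^2} \to 1/\sqrt V$ almost surely, so Lemma~\ref{lemma:average_of_converging_sequence} gives $\frac{1}{n}\sum_{i=1}^n 1/\sqrt{\widehat m_{4, i}^2} \to 1/\sqrt V$, and dropping the finitely many initial indices does not affect this limit. Multiplying by $\sqrt{2\log(1/\delta_n)} \to \sqrt{2\log(1/\delta)}$ yields the claim.

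The main obstacle is the careful bookkeeping around the truncation: both $M$ (beyond which $\widehat m_{4,i}^2 \geq V/2$) and $N$ (beyond which the truncation becomes inactive) are sample-path dependent, so one must be explicit that the analysis is pointwise on a probability-one event. Once this is set up, the remainder is a routine combination of the Cesàro mean argument and continuity of $x\mapsto 1/\sqrt x$ at $V > 0$.
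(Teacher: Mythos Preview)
Your proposal is correct and follows essentially the same route as the paper: both fix a probability-one event on which $\widehat m_{4,i}^2 \to V > 0$, argue that the truncation by $c_1$ becomes inactive for large $n$, and then finish with the Cesàro argument of Lemma~\ref{lemma:average_of_converging_sequence}. The only cosmetic difference is that the paper invokes Lemma~\ref{lemma:sequence_lower_bounded_almost_surely} (using $c_2>0$) to get a uniform-in-$i$ lower bound on $\widehat m_{4,i}^2$, which makes the truncation inactive for \emph{all} $i$ once $n$ is large, whereas you use the eventual bound $\widehat m_{4,i}^2 \geq V/2$ for $i \geq M$ and handle the finite prefix separately; both are valid.
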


If $\V \lb(X-\mu)^2 \rb > 0$, we study the inverse of the (normalized) sum of the plug-ins. In the next proposition, we prove that such a quantity converges almost surely to $0$ at a $\tilde \bigO ( \frac{1}{\sqrt n} )$ rate.  Its proof may be found in Appendix~\ref{proof:sum_lambda_rescaled_converges_zero_fourth_moment}.

\begin{proposition} \label{proposition:sum_lambda_rescaled_converges_zero_fourth_moment}
    Let $X_1, \ldots, X_n$ fulfill Assumption~\ref{ass:main_assumption} and Assumption~\ref{assumption:optimal_widths} such that $\V \lb (X_i-\mu)^2\rb = 0$. Let $(\delta_n)_{n \geq 1}$ be a deterministic sequence such that
    \begin{align*}
        \delta_n \to \delta > 0, \quad \delta_n >0.
    \end{align*}
    Define
    \begin{align*}
        \lambda_{t,\delta_n} := \sqrt\frac{2\log(1/\delta_n)}{\widehat m_{4, t}^2 n} \wedge c_1,
    \end{align*}
    with $c_1 \in (0, 1)$. Then
    \begin{align*}
        \frac{1}{\frac{1}{\sqrt{n}} \sum_{i = 1}^n \lambda_{i,\delta_n}} = \tilde \bigO \lp \frac{1}{\sqrt n} \rp
    \end{align*}
    almost surely.
\end{proposition}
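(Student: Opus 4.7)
The plan is to lower bound $\sum_{i=1}^n \lambda_{i,\delta_n}$ by $n/\widetilde{\bigO}(1)$ almost surely; inverting this immediately yields the claim $\sqrt{n}/\sum_{i=1}^n \lambda_{i,\delta_n} = \widetilde{\bigO}(1/\sqrt n)$.

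First, I would invoke Proposition~\ref{proposition:zero_fourth_moment_scales_as_one_over_upper_bound} (or Proposition~\ref{proposition:zero_fourth_moment_scales_as_one_over_lower_bound}, depending on whether the $\widehat\mu_i$ underlying $\widehat m_{4,t}^2$ comes from Section~\ref{section:upper_bound} or Section~\ref{section:lower_bound}) to conclude $\widehat m_{4,t}^2 = \widetilde{\bigO}(1/t)$ almost surely. On the corresponding almost-sure event I fix a random constant $C>0$ and a deterministic exponent $k\geq 0$ with $\widehat m_{4,t}^2 \leq C \log^k(1+t)/t$ for every $t\geq 1$. Plugging this upper bound into the definition of $\lambda_{i,\delta_n}$ and using monotonicity $\log^k(1+i) \leq \log^k(1+n)$ for $i \leq n$,
\[
\lambda_{i,\delta_n} \geq c_1 \wedge \lp g_n\sqrt{i/n}\rp, \qquad g_n := \sqrt{\frac{2\log(1/\delta_n)}{C \log^k(1+n)}}.
\]

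To dispose of the cap without a case split, I would use the elementary observation that $\sqrt{i/n}\leq 1$ for $i \leq n$, so both $c_1\sqrt{i/n}\leq c_1$ and $g_n\sqrt{i/n}\leq g_n$; hence $\min\{c_1,g_n\}\sqrt{i/n} \leq c_1 \wedge (g_n\sqrt{i/n})$, giving the clean bound $\lambda_{i,\delta_n} \geq \min\{c_1,g_n\}\sqrt{i/n}$. Summing over $i$ and invoking Lemma~\ref{lemma:series_sqrt},
\[
\sum_{i=1}^n \lambda_{i,\delta_n} \geq \min\{c_1,g_n\}\,\frac{1}{\sqrt{n}}\sum_{i=1}^n \sqrt{i} \geq \frac{2\min\{c_1,g_n\}}{3}\,n.
\]

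Because $\delta_n \to \delta > 0$ and $C$ is almost surely finite, $g_n$ vanishes at most polylogarithmically in $n$, i.e., $g_n \geq 1/\widetilde{\bigO}(1)$ almost surely. Combined with $c_1 > 0$ this yields $\min\{c_1,g_n\} = 1/\widetilde{\bigO}(1)$, so $\sum_{i=1}^n \lambda_{i,\delta_n} \geq n/\widetilde{\bigO}(1)$ almost surely, and dividing $\sqrt n$ by this quantity gives the desired conclusion. The main obstacle is the cap $c_1$ inside the definition of $\lambda_{i,\delta_n}$; without the observation that $\sqrt{i/n}\leq 1$ allows the minimum to be pulled outside the square root, one would have to split into the regimes where the square-root term dominates the cap and where it does not, which is cumbersome because $g_n$ may either stabilize (if $k=0$) or shrink polylogarithmically (if $k\geq 1$) depending on the realization.
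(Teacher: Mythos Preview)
Your proof is correct and in fact cleaner than the paper's. The paper splits into two cases according to whether $\limsup_{t\to\infty} t\,\widehat m_{4,t}^2$ is finite or infinite: in the first case it argues essentially as you do (with a constant in place of your polylog factor), while in the second it shows that the cap $c_1$ is eventually inactive and then applies the harmonic--quadratic mean inequality to $\sum_i 1/\widehat m_{4,i}$ together with $\widehat m_{4,t}^2=\widetilde{\bigO}(1/t)$. Your approach avoids this dichotomy entirely by carrying the polylog bound $\widehat m_{4,t}^2\le C\log^k(1+t)/t$ uniformly and using the elementary observation that $\min\{c_1,g_n\}\sqrt{i/n}\le c_1\wedge(g_n\sqrt{i/n})$ to pull the minimum outside, so the cap never needs separate treatment. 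What you lose is nothing of substance; what you gain is a single-line argument in place of a case split and an appeal to the power-mean inequality.

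One minor remark: your claim that the exponent $k$ is deterministic is justified by inspecting the proofs of Propositions~\ref{proposition:zero_fourth_moment_scales_as_one_over_upper_bound} and~\ref{proposition:zero_fourth_moment_scales_as_one_over_lower_bound} (the polylog factor there has an explicit $\omega$-independent form), but even if $k$ were random your argument would go through pointwise in $\omega$, so this is not a load-bearing assumption.
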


We now analyze the almost sure converge of the sum of the product of two sequences of random variables, one involving the plug-ins through the function $\psi_E$. Its proof may be found in Appendix~\ref{proof:sum_psi_e_lambda_rescaled_converges}

\begin{proposition} \label{proposition:sum_psi_e_lambda_rescaled_converges}
    Let $X_1, \ldots, X_n$ fulfill Assumption~\ref{ass:main_assumption} and Assumption~\ref{assumption:optimal_widths} such that $\V \lb (X_i-\mu)^2\rb > 0$. Let $(\delta_n)_{n \geq 1}$ be a deterministic sequence such that
    \begin{align*}
        \delta_n \nearrow \delta > 0, \quad \delta_n >0.
    \end{align*}
    Define
    \begin{align*}
        \lambda_{t,\delta_n} := \sqrt\frac{2\log(1/\delta_n)}{\widehat m_{4, t}^2 n} \wedge c_1,
    \end{align*}
    with $c_1 > 0$, and $\widehat m_{4, t}^2$ defined as in Section~\ref{section:main_results} with $c_2 > 0$. Let $(Z_n)_{n\geq1}$ be such that
    \begin{align*} 
        Z_i \geq 0, \quad \frac{1}{n} \sum_{i=1}^n Z_i \stackrel{a.s.}{\to} a,
    \end{align*}
    with $a \in \R$. Then
    \begin{align*}
         \sum_{i = 1}^n \psi_E \lp \lambda_{i,\delta_n} \rp Z_i \stackrel{a.s.}{\to} \frac{a \log(1/\delta)}{V \lb (X_i-\mu)^2\rb}.
    \end{align*}
\end{proposition}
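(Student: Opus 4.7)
The key observation is that $\psi_E(\lambda)/\lambda^2 \to 1/2$ as $\lambda \to 0$, so for large $n$ each $\psi_E(\lambda_{i,\delta_n})$ is well approximated by $\lambda_{i,\delta_n}^2/2$. Substituting the definition $\lambda_{i,\delta_n}^2 = 2\log(1/\delta_n)/(\widehat m_{4,i}^2\, n)$ (valid once the square-root branch attains the minimum with $c_1$) reduces the target sum to
\[
\frac{\log(1/\delta_n)}{n}\sum_{i=1}^n \frac{Z_i}{\widehat m_{4,i}^2},
\]
whose almost sure limit is $a\log(1/\delta)/\V[(X-\mu)^2]$ by Lemma~\ref{lemma:aibiab} applied pathwise with $a_i = 1/\widehat m_{4,i}^2$ and $b_i = Z_i$. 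The plan is thus to (i) upgrade the pointwise approximation $\psi_E(\lambda)\approx \lambda^2/2$ to an asymptotic equivalence valid uniformly in $i\leq n$, (ii) confirm the square-root form of $\lambda_{i,\delta_n}$ is eventually active across all $i\leq n$, and (iii) invoke Lemma~\ref{lemma:aibiab}.

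First I combine Proposition~\ref{proposition:fourth_moment_as_convergence} with the almost sure convergences $\widehat\mu_i \to \mu$ and $\widehat\sigma_i^2 \to \sigma^2$ (standard strong-law arguments for the averaged estimators in Section~\ref{section:main_results}) to conclude $\widehat m_{4,i}^2 \to V:=\V[(X-\mu)^2]$ almost surely. Since $V>0$ by hypothesis, almost surely there exists a random $i_0$ with $\widehat m_{4,i}^2 > V/2$ for all $i\geq i_0$. For $i > i_0$ this gives $\lambda_{i,\delta_n}^2 \leq 4\log(1/\delta_n)/(V n)$ uniformly, and for the finitely many $i < i_0$ the $1/\sqrt n$ factor in the square root forces $\lambda_{i,\delta_n}\to 0$ individually. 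Hence $\sup_{i\leq n}\lambda_{i,\delta_n}\to 0$ almost surely, and in particular for all $n$ beyond some random threshold the minimum in the definition of $\lambda_{i,\delta_n}$ is attained by the square-root term for every $i\leq n$.

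Second, writing $g(\lambda):=\psi_E(\lambda)/(\lambda^2/2)$, the function $g$ is monotone increasing by Lemma~\ref{lemma:psi_E_vs_psiN_increasing}, satisfies $g(\lambda)\geq 1$ for $\lambda\geq 0$, and tends to $1$ as $\lambda \downarrow 0$. Nonnegativity of $Z_i$ then yields the sandwich
\[
\sum_{i=1}^n \tfrac{\lambda_{i,\delta_n}^2}{2} Z_i \;\leq\; \sum_{i=1}^n \psi_E(\lambda_{i,\delta_n}) Z_i \;\leq\; g\!\left(\sup_{i\leq n}\lambda_{i,\delta_n}\right)\sum_{i=1}^n \tfrac{\lambda_{i,\delta_n}^2}{2} Z_i.
\]
Since the outer factor tends to $1$ almost surely, it suffices to show the leftmost expression converges to $a\log(1/\delta)/V$. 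For $n$ large, by the previous step this expression equals $\log(1/\delta_n)\cdot \tfrac{1}{n}\sum_{i=1}^n Z_i/\widehat m_{4,i}^2$; Lemma~\ref{lemma:aibiab} applied pathwise with $a_i = 1/\widehat m_{4,i}^2$ (converging to $1/V$) and $b_i = Z_i$ (whose Ces\`aro average tends to $a$) gives $\tfrac{1}{n}\sum Z_i/\widehat m_{4,i}^2 \to a/V$, and combining with $\log(1/\delta_n)\to\log(1/\delta)$ closes the argument.

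The main obstacle is step (ii), namely establishing $\sup_{i\leq n}\lambda_{i,\delta_n}\to 0$ uniformly over a window whose length itself grows with $n$: because $\widehat m_{4,i}^2$ may be arbitrarily small for small $i$, the argument must carefully separate the finitely many ``bad'' initial indices (for which the $1/\sqrt n$ factor is exploited) from the tail where the almost sure convergence of $\widehat m_{4,i}^2$ provides a uniform positive lower bound. Once this uniformity is secured, the remaining computations are routine asymptotics using the cited lemmata.
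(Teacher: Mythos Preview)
Your proof is correct and follows the same overall plan as the paper: show that $\widehat m_{4,i}^2\to V>0$ almost surely, argue that the cap $c_1$ is eventually inactive uniformly over $i\le n$, replace $\psi_E(\lambda_{i,\delta_n})$ by $\lambda_{i,\delta_n}^2/2$, and then apply Lemma~\ref{lemma:aibiab} pathwise to the Ces\`aro average $\tfrac{1}{n}\sum Z_i/\widehat m_{4,i}^2$.

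The one genuine difference is how you pass from $\psi_E$ to $\psi_N=\lambda^2/2$. The paper keeps the ratio $\xi_{n,i}=\psi_E(\lambda_{i,\delta_n})/\psi_N(\lambda_{i,\delta_n})$ as a doubly indexed array and invokes the triangular-array Lemma~\ref{lemma:anibiab}, using the monotonicity $\delta_n\nearrow\delta$ to get $|\xi_{n,i}-1|\le|\xi_{i,i}-1|$. Your sandwich, by contrast, pulls out the single factor $g(\sup_{i\le n}\lambda_{i,\delta_n})$ (via monotonicity of $g$ from Lemma~\ref{lemma:psi_E_vs_psiN_increasing} and $Z_i\ge 0$), reducing everything to the simpler single-index Lemma~\ref{lemma:aibiab}. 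This is a small but real simplification: it avoids Lemma~\ref{lemma:anibiab} entirely and does not use the monotonicity of $\delta_n$ beyond ensuring $\inf_n\delta_n>0$. The paper's route is slightly more robust if one later wanted to weight the terms differently, but for the stated claim your sandwich is cleaner.
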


Lastly, we present a technical proposition that will be used in the proof of Theorem~\ref{theorem:lower_order_term}. We defer its proof to Appendix~\ref{proof:nasty_sum_scales_logarithmically}.

\begin{proposition} \label{proposition:nasty_sum_scales_logarithmically}
    Let $\alpha_{1, n} =  \Omega \lp \frac{1}{\log(n)} \rp$ and $\widehat\sigma_k$ be defined as in Section~\ref{section:main_results}, with $c_3 > 0$. Then
    \begin{align} \label{eq:nasty_sum_scales_logarithmically}
     \sum_{2 \leq i \leq n} \frac{\lc \log(2/\alpha_{1,n}) + \sigma^2\sum_{k=1}^{i-1} \psi_P\lp\sqrt\frac{2\log(2/\alpha_{1,n})}{\widehat \sigma_{k}^2 k \log (1+k)}\wedge c_5\rp\rc^2}{\lp \sum_{k=1}^{i-1} \sqrt\frac{2\log(2/\alpha_{1,n})}{\widehat \sigma_{k}^2 k \log (1+k)} \wedge c_5\rp^2} = \tilde\bigO \lp 1\rp
    \end{align}
almost surely.
\end{proposition}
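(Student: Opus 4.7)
The plan is to upper bound the numerator and lower bound the denominator of each summand, and then sum over $i$. Throughout, write $L_n := \log(2/\alpha_{1,n})$, $\tilde\lambda_k := \sqrt{2L_n/(\widehat\sigma_k^2\, k \log(1+k))} \wedge c_5$, $u_i := \sum_{k=1}^{i-1}\tilde\lambda_k$, and $v_i := L_n + \sigma^2\sum_{k=1}^{i-1}\psi_P(\tilde\lambda_k)$, so that \eqref{eq:nasty_sum_scales_logarithmically} becomes $\sum_{i=2}^n v_i^2/u_i^2 = \widetilde{\bigO}(1)$. The hypothesis $\alpha_{1,n} = \Omega(1/\log n)$ gives $\log 2 \leq L_n = \bigO(\log\log n)$, so both $L_n$ and $L_n^{-1}$ are $\widetilde{\bigO}(1)$.

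For the lower bound on $u_i$, use $\widehat\sigma_k^2 \leq 1$ (immediate from the definition in Section~\ref{section:upper_bound}) to obtain $\tilde\lambda_k \geq \sqrt{2L_n/(k\log(1+k))} \wedge c_5$. Restricting the sum to $k \in [\lceil i/2\rceil, i-1]$, each term dominates $\sqrt{2L_n/(i\log(1+i))}$ once $i$ passes a deterministic $\widetilde{\bigO}(1)$ threshold $i_0$ at which this quantity falls below $c_5$. Counting the $\Theta(i)$ terms yields $u_i^2 \geq L_n i/(2\log(1+i))$ for all $i \geq i_0$.

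For the upper bound on $v_i$, Lemma~\ref{lemma:psi_p_decoupled} applied with $a = \tilde\lambda_k/c_5$ and $b = c_5$ gives $\psi_P(\tilde\lambda_k) \leq (\psi_P(c_5)/c_5^2)\tilde\lambda_k^2$. If $\sigma^2 = 0$ then $v_i = L_n = \widetilde{\bigO}(1)$ trivially. If $\sigma^2 > 0$, the definition of $\widehat\sigma_k^2$ together with the strong law of large numbers applied to $\sum(X_i - \bar\mu_i)^2$ and the convergence $\bar\mu_i \to \mu$ a.s.\ yields $\widehat\sigma_k^2 \to \sigma^2$ almost surely; hence there exists an a.s.\ finite random index $K = K(\omega)$ with $\widehat\sigma_k^2 \geq \sigma^2/2$ for all $k \geq K$. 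Then $\sum_{k=K}^{i-1}\tilde\lambda_k^2 \leq (4L_n/\sigma^2)\sum_{k=K}^{i-1} 1/(k\log(1+k)) = \bigO(L_n\log\log i/\sigma^2)$ via the integral estimate $\int 1/(x\log x)\,dx = \log\log x$, while the initial $k < K$ terms contribute at most $Kc_5^2$ to $\sum\tilde\lambda_k^2$. Altogether $v_i \leq L_n + \bigO(\sigma^2 K + L_n\log\log n) = \widetilde{\bigO}(1)$ almost surely.

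Combining the two bounds, $v_i^2/u_i^2 = \widetilde{\bigO}(\log(1+i)/(L_n i))$ for $i \geq i_0$, and summing yields $\sum_{i=i_0}^n v_i^2/u_i^2 = \widetilde{\bigO}(L_n^{-1})\cdot\bigO(\log^2 n) = \widetilde{\bigO}(1)$. The finitely many indices $i < i_0$ are handled by noting $u_i \geq \tilde\lambda_1 > 0$ deterministically (since $\widehat\sigma_1^2 = c_3 > 0$) and $v_i \leq L_n + \sigma^2 i_0\,\psi_P(c_5) = \widetilde{\bigO}(1)$, for a further $\widetilde{\bigO}(1)$ contribution. The main obstacle lies in the $\sigma^2 > 0$ case: relying only on the deterministic bound $\widehat\sigma_k^2 \geq c_3/k$ would give the much weaker $\sum_k \tilde\lambda_k^2 = \bigO(L_n\, i/\log i)$, which would blow up the final sum; it is precisely the eventual (random) lower bound $\widehat\sigma_k^2 \geq \sigma^2/2$ past the index $K(\omega)$ that cuts the growth of $v_i$ down to $\log\log i$ and preserves the $\widetilde{\bigO}(1)$ overall rate.
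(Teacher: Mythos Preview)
Your proof is correct and follows the same overall skeleton as the paper's: bound the numerator terms $v_i$ uniformly in $i$, lower bound the denominator terms $u_i$, then sum. The denominator treatment is essentially identical (both exploit $\widehat\sigma_k^2 \leq 1$ and arrive at $u_i^2 \gtrsim i/\log i$). The numerator treatment differs in a way worth noting: the paper \emph{drops} the $\wedge c_5$ truncation and then chains through $\psi_P(x) \leq e^x$ and $\sqrt{x} \leq x$ to land at the rather loose bound $\sum_{k < i}\psi_P(\cdot) \leq (2/\alpha_{1,n})^{4/\varkappa^2(\omega)}(\log i + 1)$, i.e.\ a $(\log n)^{C(\omega)}$ estimate with a \emph{random} exponent coming from a uniform almost-sure lower bound $\inf_k \widehat\sigma_k^2 \geq \varkappa^2(\omega) > 0$ (Lemma~\ref{lemma:sequence_lower_bounded_almost_surely}). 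You instead keep the truncation and use Lemma~\ref{lemma:psi_p_decoupled} with $a = \tilde\lambda_k/c_5$, $b = c_5$ to get the quadratic control $\psi_P(\tilde\lambda_k) \leq (\psi_P(c_5)/c_5^2)\tilde\lambda_k^2$, combined with the eventual bound $\widehat\sigma_k^2 \geq \sigma^2/2$; this yields the much sharper $v_i = O(L_n\log\log n)$. Both routes give $\tilde\bigO(1)$, but yours is cleaner and avoids the random power of $\log n$.
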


\section{Main proofs} \label{appendix:main_proofs}

\subsection{Proof of Theorem~\ref{theorem:bennett_anytimevalid}} \label{proof:bennett_anytimevalid}

Fix $t$ and observe
\begin{align*}
    \E_{t-1} \exp \lp \tilde\lambda_t (X_t-\mu)\rp &= \E_{t-1} \sum_{k = 0}^\infty \frac{\lp \tilde\lambda_t (X_t-\mu)\rp^k}{k!} 
    \\&=  \sum_{k = 0}^\infty \frac{\tilde\lambda_t^k \E_{t-1} \lb (X_t-\mu)^k \rb}{k!}
    \\&\stackrel{(i)}{=} 1 + \sum_{k = 2}^\infty \frac{\tilde\lambda_t^k \E_{t-1} \lb (X_t-\mu)^k \rb}{k!}
    \\&\stackrel{(ii)}{\leq} 1 + \E_{t-1} \lb (X_t-\mu)^2 \rb\sum_{k = 2}^\infty \frac{\tilde\lambda_t^k }{k!}
    \\&\stackrel{(iii)}{=} 1 + \sigma^2 \psi_P(\tilde\lambda_t)
    \\&\stackrel{(iv)}{\leq} \exp \lp \sigma^2 \psi_P(\tilde\lambda_t) \rp,
\end{align*}
where (i) follows from $\E X_t = \mu$, (ii) from $|X_t- \mu| \leq 1$, (iii) from $\E_{t-1} \lb (X_t-\mu)^2 \rb = \sigma^2$, and (iv) from $1 + x \leq \exp(x)$ for all $x \in \R$.

It thus follows that 
\begin{align*}
    S_t' = \exp \lp \sum_{i \leq t} \tilde\lambda_i (X_i - \mu) - \sigma^2\sum_{i \leq t} \psi_P(\tilde\lambda_i)\rp \quad t \geq 1, \quad S_0' = 1,
\end{align*}
is a nonnegative supermartingale. In view of Ville's inequality (Theorem~\ref{theorem:villesinequality}), we observe that
\begin{align*}
    \Pb \lp \exp\lc\sum_{i \leq t} \tilde\lambda_i (X_i - \mu) - \sigma^2\sum_{i \leq t} \psi_P(\tilde\lambda_i)\rc \geq 2/\delta \rp \leq \frac{\delta}{2},
\end{align*}
thus
\begin{align*}
    \Pb \lp \sum_{i \leq t} \tilde\lambda_i (X_i - \mu) - \sigma^2\sum_{i \leq t} \psi_P(\tilde\lambda_i) \geq \log (2/\delta) \rp \leq \frac{\delta}{2},
\end{align*}
and so 
\begin{align*}
    \Pb \lp \mu \leq \frac{\sum_{i \leq t} \tilde\lambda_i X_i - \sigma^2\sum_{i \leq t} \psi_P(\tilde\lambda_i) - \log (2/\delta)}{\sum_{i \leq t} \tilde\lambda_i} \rp \leq \frac{\delta}{2}.
\end{align*}

Arguing analogously replacing $X_i - \mu$ for $\mu - X_i$ and taking the union bound concludes the proof.

\subsection{Proof of Theorem~\ref{theorem:main_supermartingale_theorem}} \label{proof:main_supermartingale_theorem}

The processes are clearly nonnegative, so it remains to prove that they are supermartingales. Let us begin by showing that $S_t^+$ is indeed a supermartingale, i.e.,
\begin{align} \label{eq:supermaringale_condition}
    \E_{t-1} \exp \lc \lambda_t \lb (X_t - \widehat\mu_t)^2 - \tilde\sigma_t^2  \rb - \psi_E(\lambda_t) \lp (X_t - \widehat\mu_t)^2 - \widehat \sigma_t^2 \rp^2 \rc \leq 1
\end{align}
for any $t \geq 1$. In order to see this, denote
\begin{align*}
    Y_t = (X_t - \widehat\mu_t)^2 - \tilde\sigma_t^2, \quad \delta_t = \widehat \sigma_t^2 - \tilde\sigma_t^2,
\end{align*}
and restate \eqref{eq:supermaringale_condition} as
\begin{align} \label{eq:supermaringale_condition_simplified}
    \E_{t-1} \exp \lc \lambda_t Y_t - \psi_E(\lambda_t) \lp Y_t - \delta_t \rp^2 \rc \leq 1.
\end{align}
From \citet[Proposition 4.1]{fan_exponential_2015}, which establishes that $\exp \lc \xi\lambda - \xi^2 \psi_E(\lambda)\rc \leq 1 + \xi \lambda$ for any $\lambda \in [0, 1)$ and $\xi \geq -1$, it follows that
\begin{align*}
    &\E_{t-1} \exp \lc \lambda_t Y_t - \psi_E(\lambda_t) \lp Y_t - \delta_t \rp^2 \rc
    \\& =\exp (\lambda_t \delta_t)\E_{t-1} \exp \lc \lambda_t (Y_t - \delta_t) - \psi_E(\lambda_t) \lp Y_t -  \delta_t \rp^2 \rc 
    \\ &\leq  \exp (\lambda_t \delta_t) \E_{t-1} \lb 1 + \lambda_t (Y_t - \delta_t)\rb
    \\ &\stackrel{(i)}{=}  \exp (\lambda_t \delta_t)  \lp 1 - \lambda_t \delta_t\rp
    \\ &\stackrel{(ii)}{\leq}  \exp (\lambda_t \delta_t) \exp\lp - \lambda_t \delta_t \rp
    \\&= 1,
\end{align*}
where (i) is obtained given that $\E_{t-1} Y_t = 0$, and (ii) from $1+x \leq e^x$ for all $x\in\R$. 

Showing that $S_t^-$ is a supermartingale follows analogously, but replacing $Y_t$ and $\delta_t$ by
\begin{align*}
      -(X_t - \widehat\mu_t)^2 + \tilde\sigma_t^2, \quad  -\widehat \sigma_t^2 + \tilde\sigma_t^2.
\end{align*}
Note that this proof is analogous to the proof of~\citet[Theorem 2]{waudby2024estimating}, but with non-constant conditional expectations $\tilde\sigma_i^2$.

\subsection{Proof of Corollary~\ref{corollary:main_corollary}} \label{proof:main_corollary}

In view of Ville's inequality (Theorem~\ref{theorem:villesinequality}) and Theorem~\ref{theorem:main_supermartingale_theorem}, the probability of the event
\begin{align*}
     \exp \lc \sum_{i_\leq t} \lambda_i \lb\pm (X_i - \widehat\mu_i)^2 \mp \tilde\sigma_i^2  \rb - \psi_E(\lambda_i) \lb (X_i - \widehat\mu_i)^2 - \widehat \sigma_i^2 \rb^2 \rc \geq 2/\delta 
\end{align*}
uniformly over $t$ is upper bounded by $\frac{\delta}{2}$, and so is
\begin{align*}
      \sum_{i_\leq t} \lambda_i \lb\pm (X_i - \widehat\mu_i)^2 \mp \tilde\sigma_i^2  \rb - \psi_E(\lambda_i) \lb (X_i - \widehat\mu_i)^2 - \widehat \sigma_i^2 \rb^2  \geq \log (2/\delta) 
\end{align*}
uniformly over $t$. Thus
\begin{align*}
    \Pb \lp \sup_t \mp \frac{\sum_{i_\leq t} \lambda_i  \tilde\sigma_i^2}{\sum_{i_\leq t} \lambda_i  } \pm D_t  - R_{t, \frac{\alpha}{2}} \geq 0\rp \leq \frac{\delta}{2}.
\end{align*}
From
\begin{align*}
    \frac{\sum_{i_\leq t} \lambda_i  \tilde\sigma_i^2}{\sum_{i_\leq t} \lambda_i  } = \frac{\sum_{i_\leq t} \lambda_i  \lb\sigma^2 + (\widehat\mu_i - \mu)^2\rb}{\sum_{i_\leq t} \lambda_i} = \sigma^2 + E_t,
\end{align*}
it follows that
\begin{align*}
    \Pb \lp \sup_t \mp \sigma^2 \mp E_t \pm D_t  - R_{t, \frac{\alpha}{2}} \geq 0\rp \leq \frac{\delta}{2},
\end{align*}
which allows to conclude that 
\begin{align*}
    \Pb \lp \sigma \notin \lp D_t - E_t - R_{t, \frac{\alpha}{2}} ,  D_t - E_t + R_{t, \frac{\alpha}{2}}\rp \rp  \leq \frac{\delta}{2},
\end{align*}
uniformly over $t$.

\subsection{Proof of Corollary~\ref{corollary:lower_bound}} \label{proof:lower_bound}

As exhibited in Section~\ref{section:lower_bound},
\begin{align*}
    \sigma^2 \geq D_t - \frac{\sum_{i \leq t}\lambda_i \tilde R_{i, \alpha_1}^2}{\sum_{i \leq t} \lambda_i} -  R_{t, \alpha_2}
\end{align*}
uniformly over $t$ with probability $\alpha_1 + \alpha_2 = \alpha$. Taking $\tilde R_{i, \alpha_1}^2$ as in \eqref{eq:widehat_mu_definition} leads to 
\begin{align*}
    \sigma^2 \geq D_t - \frac{\sum_{i \leq t}\lambda_i \lp \tilde C^{(2)}_t \sigma^4 + \tilde C^{(1)}_{t, \alpha_1} \sigma^2 + \tilde C^{(0)}_{t, \alpha_1}\rp }{\sum_{i \leq t} \lambda_i} -  R_{t, \alpha_2},
\end{align*}
i.e.,
\begin{align*}
    \sigma^2 \geq D_t - C^{(2)}_t \sigma^4 - \lp C^{(1)}_{t, \alpha_1} - 1 \rp \sigma^2 - C^{(0)}_{t, \alpha_1} -  R_{t, \alpha_2}.
\end{align*}
Thus, it suffices to consider $\sigma^2 \geq \sigma^2_{l, t}$, where $\sigma^2_{l, t}$ is such that
\begin{align*}
   \sigma^2_{l, t} = D_t - C^{(2)}_t \sigma^4_{l, t} - \lp C^{(1)}_{t, \alpha_1} - 1 \rp \sigma^2_{l, t} - C^{(0)}_{t, \alpha_1} -  R_{t, \alpha_2}.
\end{align*}
Clearly, solving for this quadratic polynomial leads to \eqref{eq:lower_bound_process}.

\subsection{Proof of Theorem~\ref{theorem:convergence_main_term}} \label{proof:convergence_main_term}

We proceed differently for the cases $\V \lb (X_i-\mu)^2\rb = 0$ and $\V \lb (X_i-\mu)^2\rb > 0$.

\textbf{Case 1: $\V \lb (X_i-\mu)^2\rb = 0$.} Note that 
\begin{align*}
    \sqrt{n} R_{n, \delta_n} &= \frac{\log(1/\delta_n) + \sum_{i \leq n}\psi_E(\lambda_{i, \delta_n}) \lp (X_i - \widehat\mu_i)^2 - \widehat \sigma_i^2 \rp^2}{\frac{1}{\sqrt{n}}\sum_{i \leq n} \lambda_{i, \delta_n}}.
\end{align*}
 Denote %Let $\lp \Omega, \mathcal{F}, P \rp$ be the probability space on which  $(X_n)_{n\geq1}$ are defined.
\begin{align*}
    \nu_i^2 := \lp (X_i - \widehat\mu_i)^2 - \widehat \sigma_i^2 \rp^2.
\end{align*}
In view of Proposition~\ref{proposition:zero_fourth_moment_scales_as_one_over_upper_bound} or Proposition~\ref{proposition:zero_fourth_moment_scales_as_one_over_lower_bound}, it follows that
\begin{align*}
    n\widehat m_{4, n}^2 = \tilde\bigO \lp 1 \rp
\end{align*}
almost surely, and so there exists $A \in \F$ with $P(A) = 1$ such that $n\widehat m_{4, n}^2(\omega) = \tilde\bigO \lp 1 \rp$ for all $\omega \in A$. For $\omega \in A$, it may be that
\begin{align} \label{eq:tmt_less_infty_proposition}
    \sum_{i = 1}^\infty \nu_i^2 (\omega) =: M < \infty
\end{align}
or 
\begin{align} \label{eq:tmt_infty_proposition}
    \sum_{i = 1}^\infty \nu_i^2 (\omega) = \infty.
\end{align}
If \eqref{eq:tmt_less_infty_proposition} holds, then
\begin{align*}
    \sum_{i \leq n}\psi_E(\lambda_{i, \delta_n}(\omega)) \lp (X_i(\omega) - \widehat\mu_i(\omega))^2 - \widehat \sigma_i^2 (\omega)\rp^2 &= \sum_{i \leq n}\psi_E(\lambda_{i, \delta_n}(\omega)) \nu_i^2(\omega)
    \\&\leq \psi_E(c_1)\sum_{i \leq n} \nu_i^2(\omega)
    \\&\leq \psi_E(c_1)M,
\end{align*}
and so $\log(1/\delta_n) + \sum_{i \leq n}\psi_E(\lambda_{i, \delta_n}(\omega)) \lp (X_i(\omega) - \widehat\mu_i(\omega))^2 - \widehat \sigma_i^2(\omega) \rp^2$ is upper bounded by $\log(1/l) + \psi_E(c_1)M$, where $l = \inf_{n \in \Nb} \delta_n$ (which is strictly positive given that $\delta_n \to \delta > 0$ and $\delta_n >0$). If \eqref{eq:tmt_infty_proposition} holds, then there exists $m(\omega) \in \Nb$ such that, for $t \geq m(\omega)$, 
    \begin{align*}
        \widehat m_{4, t}^2(\omega) t = c_2 + \sum_{i = 1}^{t-1} \nu_i^2 (\omega) \geq \frac{2\log(1/l)}{c_1^2}.
    \end{align*}
    Thus
    \begin{align*}
        \sqrt\frac{2\log(1/\delta_n)}{\widehat m_{4, t}^2(\omega) n} \leq \sqrt\frac{2\log(1/l)}{\widehat m_{4, t}^2(\omega) t} \leq  c_1,
    \end{align*}
    and so
    \begin{align*}
        \lambda_{i,\delta_n}(\omega) =  \sqrt\frac{2\log(1/\delta_n)}{\widehat m_{4, t}^2(\omega) n}
    \end{align*}
    for $i \geq m(\omega)$. Denote
    \begin{align*}
    (I_n(\omega)) &:= \log(1/\delta_n) + \sum_{i < m(\omega)}\psi_E(\lambda_{i, \delta_n}(\omega)) \lp (X_i(\omega) - \widehat\mu_i(\omega))^2 - \widehat \sigma_i^2 (\omega) \rp^2,
    \\ (II_n(\omega)) &:= \sum_{i=m(\omega)}^n\psi_E(\lambda_{i, \delta_n}) \lp (X_i(\omega) - \widehat\mu_i(\omega))^2 - \widehat \sigma_i^2(\omega) \rp^2.
    \end{align*}
    We observe that
    \begin{align*}
        (I_n(\omega)) \leq \log(1/l) + \psi_E(c_1)\sum_{i < m(\omega)} \lp (X_i(\omega) - \widehat\mu_i(\omega))^2 - \widehat \sigma_i^2(\omega) \rp^2,
    \end{align*}
    and so it is bounded. Furthermore,
    \begin{align*}
        (II_n(\omega)) &= \sum_{i=m(\omega)}^n\psi_E \lp \sqrt\frac{2\log(1/\delta_n)}{\widehat m_{4, i}^2(\omega) n} \rp \lp (X_i(\omega) - \widehat\mu_i(\omega))^2 - \widehat \sigma_i^2(\omega) \rp^2
        \\&\stackrel{(i)}{\leq} \frac{2\log(1/\delta_n)\psi_E \lp  c_1 \rp}{c_1^2}  \sum_{i=m(\omega)}^n \frac{1}{\widehat m_{4, i}^2(\omega) n} \lp (X_i(\omega) - \widehat\mu_i(\omega))^2 - \widehat \sigma_i^2(\omega) \rp^2
        \\&\stackrel{}{\leq} \frac{2\log(1/l)\psi_E \lp  c_1 \rp}{c_1^2}  \sum_{i=m(\omega)}^n \frac{1}{\widehat m_{4, i}^2(\omega) n} \lp (X_i(\omega) - \widehat\mu_i(\omega))^2 - \widehat \sigma_i^2(\omega) \rp^2
        \\&\stackrel{}{\leq} \frac{2\log(1/l)\psi_E \lp  c_1 \rp}{c_1^2 }  \sum_{i=m(\omega)}^n \frac{1}{i\widehat m_{4, i}^2(\omega) } \lp (X_i(\omega) - \widehat\mu_i(\omega))^2 - \widehat \sigma_i^2(\omega) \rp^2
        \\&\stackrel{}{=} \frac{2\log(1/l)\psi_E \lp  c_1 \rp}{c_1^2 }  \sum_{i=m(\omega)}^n \frac{1}{c_2 + \sum_{i = 1}^{i-1} \nu_i^2 (\omega)} \nu_i^2(\omega)
        \\&\stackrel{(ii)}{\leq} \frac{2\log(1/l)\psi_E \lp  c_1 \rp}{c_1^2 }  \log \lp c_2 + \sum_{i = 1}^{n} \nu_i^2 (\omega) \rp
        \\&\stackrel{}{=} \frac{2\log(1/l)\psi_E \lp  c_1 \rp}{c_1^2 }  \log \lp \widehat m_{4, n}^2(\omega) n \rp
    \end{align*}
    where (i) follows from Lemma~\ref{lemma:psi_p_decoupled} and $c_1 \in (0,1)$, and (ii) follows from Lemma~\ref{lemma:sum_ai_over_sum_ai}. Given that $m_{4, n}^2(\omega) n  = \tilde\bigO \lp 1 \rp$, it follows from the previous inequalities that $(I_n(\omega)) + (II_n(\omega))$ is also $\tilde\bigO \lp 1 \rp$. Consequently, we have shown that regardless of \eqref{eq:tmt_less_infty_proposition} or \eqref{eq:tmt_infty_proposition} holding, it follows that 
    \begin{align*}
        \sum_{i \leq n}\psi_E(\lambda_{i, \delta_n}(\omega)) \lp (X_i(\omega) - \widehat\mu_i(\omega))^2 - \widehat \sigma_i^2 (\omega)\rp^2 &= \tilde\bigO \lp 1 \rp
    \end{align*}
    for all $\omega \in A$, with $P(A) = 1$. That is, 
    \begin{align*}
        \log(1/\delta_n) + \sum_{i \leq n}\psi_E(\lambda_{i, \delta_n}) \lp (X_i - \widehat\mu_i)^2 - \widehat \sigma_i^2 \rp^2 &= \tilde\bigO \lp 1 \rp
    \end{align*}
    almost surely. Further, by Proposition~\ref{proposition:sum_lambda_rescaled_converges_zero_fourth_moment} and $\delta_{n} \to \delta$, it also follows that
     \begin{align*}
        \frac{1}{\frac{1}{\sqrt{n}} \sum_{i = 1}^n \lambda_{i,\delta_{n}}} = \tilde \bigO \lp \frac{1}{\sqrt n} \rp
    \end{align*}
    almost surely. Thus, it is concluded that 
    \begin{align*}
         \sqrt{n} R_{n, \delta_n} = \tilde \bigO \lp \frac{1}{\sqrt n} \rp
    \end{align*}
    almost surely, and so it converges to $0$ almost surely.

\textbf{Case 2: $\V \lb (X_i-\mu)^2\rb > 0$.} 
By Proposition~\ref{proposition:sum_lambda_rescaled_converges},
\begin{align*}
    \frac{1}{\sqrt{n}} \sum_{i = 1}^n \lambda_{i,\delta_{ n}} \stackrel{a.s.}{\to} \sqrt \frac{2 \log (1 / \delta)}{\V \lb (X_i-\mu)^2\rb}.
\end{align*}
In view of
\begin{align*}
    \frac{1}{n}\sum_{i \leq n}\lp (X_i - \widehat\mu_i)^2 - \widehat \sigma_i^2 \rp^2 \to \V \lb (X_i-\mu)^2\rb
\end{align*}
almost surely (Proposition~\ref{proposition:fourth_moment_as_convergence}) and Proposition~\ref{proposition:sum_psi_e_lambda_rescaled_converges}, it follows that
\begin{align*}
    \sum_{i \leq n}\psi_E(\lambda_i) \lp (X_i - \widehat\mu_i)^2 - \widehat \sigma_i^2 \rp^2 \to \frac{V \lb (X_i-\mu)^2\rb \log(1/\delta)}{V \lb (X_i-\mu)^2\rb} = \log(1/\delta).
\end{align*}
We thus conclude that 
\begin{align*}
    \sqrt{n} R_{n, \delta_n} \to \frac{\log(1/\delta) + \log(1/\delta)}{\sqrt\frac{2 \log (1 / \delta)}{\V \lb (X_i-\mu)^2\rb}} = \sqrt{ 2\V \lb (X_i-\mu)^2\rb \log (1 / \delta) }
\end{align*}
almost surely.

\subsection{Proof of Theorem~\ref{theorem:lower_order_term}} \label{proof:lower_order_term}

We differentiate the cases $\V \lb (X_i-\mu)^2\rb = 0$ and $\V \lb (X_i-\mu)^2\rb > 0$.

\textbf{Case 1: $\V \lb (X_i-\mu)^2\rb = 0$.} By Proposition~\ref{proposition:sum_lambda_rescaled_converges_zero_fourth_moment} and $\alpha_{2, n} \to \alpha$,
 \begin{align*}
    \frac{1}{\frac{1}{\sqrt{n}} \sum_{i = 1}^n \lambda_{i,\alpha_{2, n}}} = \tilde \bigO \lp \frac{1}{\sqrt n} \rp
\end{align*}
almost surely. Furthermore, in view of Proposition~\ref{proposition:nasty_sum_scales_logarithmically},
\begin{align*}
    \sum_{i \leq n} \frac{\lc \log(2/\alpha_{1,n}) + \sigma^2\sum_{k=1}^{i-1} \psi_P\lp\sqrt\frac{2\log(2/\alpha_{1,n})}{\widehat \sigma_{k}^2 k \log (1+k)}\wedge c_5\rp\rc^2}{\lp \sum_{k=1}^{i-1} \sqrt\frac{2\log(2/\alpha_{1,n})}{\widehat \sigma_{k}^2 k \log (1+k)} \wedge c_5\rp^2}.
\end{align*}
is $\tilde\bigO (1)$ almost surely. Thus, the product of both is $\tilde \bigO \lp \frac{1}{\sqrt n} \rp$ almost surely, which further implies that it converges to $0$ almost surely.  

\textbf{Case 2: $\V \lb (X_i-\mu)^2\rb > 0$.} By Proposition~\ref{proposition:sum_lambda_rescaled_converges} and $\alpha_{2, n} \to \alpha$, 
\begin{align*}
    \frac{1}{\sqrt{n}}\sum_{i \leq n} \lambda_{i, \alpha_{2, n}} \stackrel{a.s.}{\to} \sqrt \frac{2 \log (1 / \delta)}{\V \lb (X_i-\mu)^2\rb}.
\end{align*}
Thus, it suffices to prove
\begin{align} \label{eq:sufficient_condition_r2}
    \sum_{i \leq n} \lambda_{i, \alpha_{2, n}} R_{i, \alpha_{1,n}} ^2 \stackrel{a.s.}{\to} 0
\end{align}
to conclude the proof. We note that $\lambda_{i, \alpha_{2, n}} R_{i, \alpha_{1,n}} ^2$ is equal to
\begin{align*}
    \frac{1}{\sqrt n}\sum_{i \leq n} \sqrt{n} \lambda_{i, \alpha_{2, n}} \frac{\lc \log(2/\alpha_{1,n}) + \sigma^2\sum_{k=1}^{i-1} \psi_P\lp\sqrt\frac{2\log(2/\alpha_{1,n})}{\widehat \sigma_{k}^2 k \log (1+k)}\wedge c_5\rp\rc^2}{\lp \sum_{k=1}^{i-1} \sqrt\frac{2\log(2/\alpha_{1,n})}{\widehat \sigma_{k}^2 k \log (1+k)} \wedge c_5\rp^2}.
\end{align*}
By Proposition~\ref{proposition:nasty_sum_scales_logarithmically},
\begin{align*}
    \frac{1}{\sqrt n}\sum_{i \leq n} \frac{\lc \log(2/\alpha_{1,n}) + \sigma^2\sum_{k=1}^{i-1} \psi_P\lp\sqrt\frac{2\log(2/\alpha_{1,n})}{\widehat \sigma_{k}^2 k \log (1+k)}\wedge c_5\rp\rc^2}{\lp \sum_{k=1}^{i-1} \sqrt\frac{2\log(2/\alpha_{1,n})}{\widehat \sigma_{k}^2 k \log (1+k)} \wedge c_5\rp^2}.
\end{align*}
is $\tilde\bigO (\frac{1}{\sqrt n})$ almost surely, and so it suffices to show that
\begin{align*}
    \sup_{n\in\Nb}\sup_{i \leq n} \sqrt{n} \lambda_{i, \alpha_{2, n}}
\end{align*}
is bounded almost surely in order to conclude the result (in view of Hölder's inequality, \eqref{eq:sufficient_condition_r2} will follow). Analogously to the proof of Proposition~\ref{proposition:sum_lambda_rescaled_converges}, there exist $m_\omega \in \Nb$ and $u(\omega) < \infty$ such that
\begin{align*}
    \lambda_{t,\alpha_{2, n}}(\omega) = \sqrt\frac{2\log(1/\alpha_{2, n})}{\widehat m_{4, t}^2(\omega) n}, \quad \frac{1}{\widehat m_{4, n}^2(\omega)} \leq u(\omega),
\end{align*}
for $n \geq m_\omega$ and $\omega \in A$, with $P(A) = 1$.
Given that $\alpha_{2, n} \to \alpha > 0$ and $\alpha_{2, n} >0$, then $l := \inf_{n} \alpha_{2, n} > 0$, and so we observe that
    \begin{align*}
        \sqrt{n}\lambda_{t,\alpha_{2, n}}(\omega) = \sqrt\frac{2\log(1/\alpha_{2, n})}{\widehat m_{4, t}^2(\omega)} \leq \sqrt{2\log(1/l)} u(\omega)
    \end{align*}
for $n \geq m_\omega$. It follows that
\begin{align*}
    \sup_{n\in\Nb}\sup_{i \leq n} \sqrt{n} \lambda_{i, \alpha_{2, n}}(\omega) \leq \lp \sqrt{2\log(1/l)} u(\omega) \rp\vee\lp \sup_{n < m_\omega}\sup_{i \leq n} \sqrt{n} \lambda_{i, \alpha_{2, n}}(\omega)\rp < \infty,
\end{align*}
and thus the result is concluded by Hölder's inequality.

\subsection{Proof of Theorem~\ref{theorem:vectorvalued_bennett_anytimevalid}} \label{proof:vectorvalued_bennett_anytimevalid}
Denote 
    \begin{align*}
        f_t = \sum_{i \leq t}\tilde\lambda_i (X_i-\mu).
    \end{align*}
    \citet[Theorem 3.2]{pinelis1994optimum} showed that\footnote{\citet[Theorem 3.2]{pinelis1994optimum} requires separability (required in \citet[Lemma 2.3]{pinelis1994optimum}). Separability is ultimately needed to ensure the tightness of the probability measure on the (more generally) Banach space.} 
    \begin{align*}
        \E_{t-1} \cosh \lp \ldba f_t \rdba  \rp \leq \lp 1 + \E_{t-1}\psi_P\lp \tilde\lambda_t \ldba X_t - \mu \rdba \rp  \rp \cosh \lp \ldba f_{t-1} \rdba  \rp.
    \end{align*} 
    Similarly to the proof of Theorem~\ref{theorem:bennett_anytimevalid}, it now follows that
    \begin{align*}
        1+\E_{t-1}\psi_P\lp \tilde\lambda_t \ldba X_t - \mu \rdba \rp &= 1+\E_{t-1} \sum_{k = 2}^\infty \frac{\lp \tilde\lambda_t \ldba X_t-\mu\rdba\rp^k}{k!} 
        \\&=  1+\sum_{k = 2}^\infty \frac{\tilde\lambda_t^k \E_{t-1} \lb \ldba X_t-\mu\rdba^k \rb}{k!}
        \\&\stackrel{(i)}{=} 1 + \sum_{k = 2}^\infty \frac{\tilde\lambda_t^k \E_{t-1} \lb \ldba X_t-\mu \rdba^k \rb}{k!}
        \\&\stackrel{(ii)}{\leq} 1 + \E_{t-1} \lb \ldba X_t-\mu\rdba^2 \rb\sum_{k = 2}^\infty \frac{\tilde\lambda_t^k }{k!}
        \\&\stackrel{(iii)}{=} 1 + \sigma^2 \psi_P(\tilde\lambda_t)
        \stackrel{(iv)}{\leq} \exp \lp \sigma^2 \psi_P(\tilde\lambda_t) \rp,
    \end{align*}
    where (i) follows from $\E X_t = \mu$, (ii) follows from $\|X_t- \mu\| \leq 1$, (iii) follows from $\E_{t-1} \ldba X_t-\mu\rdba^2  = \sigma^2$, and (iv) follows from $1 + x \leq \exp(x)$ for all $x \in \R$. Thus, the process
    \begin{align*}
        S_t' = \cosh \lp\ldba\sum_{i \leq t}\tilde\lambda_i (X_i-\mu)\rdba\rp\exp\lp - \sigma^2\sum_{i \leq t} \psi_P(\tilde\lambda_i)\rp,
    \end{align*}
    for $t \geq 1$, and $S_0' = 1$, is a nonnegative supermartingale. In view of Ville's inequality (Theorem~\ref{theorem:villesinequality}) we observe that
    \begin{align*}
        \Pb \lp \cosh \lp\ldba\sum_{i \leq t}\tilde\lambda_i (X_i-\mu)\rdba\rp\exp\lp - \sigma^2\sum_{i \leq t} \psi_P(\tilde\lambda_i)\rp \geq \frac{1}{\delta} \rp \leq \delta,
    \end{align*}
    and, from $\exp x \leq2\cosh x$ for $x \in\R$, it follows that
    \begin{align*}
        \Pb \lp \exp \lp \ldba\sum_{i \leq t}\tilde\lambda_i (X_i-\mu)\rdba - \sigma^2\sum_{i \leq t} \psi_P(\tilde\lambda_i)\rp \geq \frac{2}{\delta} \rp \leq \delta.
    \end{align*}
    Thus
    \begin{align*}
        \Pb \lp \ldba\sum_{i \leq t}\tilde\lambda_i (X_i-\mu)\rdba - \sigma^2\sum_{i \leq t} \psi_P(\tilde\lambda_i) \geq \log (2/\delta) \rp \leq \frac{\delta}{2},
    \end{align*}
    and so 
    \begin{align*}
        \Pb \lp \ldba\mu-\frac{\sum_{i \leq t}\tilde\lambda_i X_i}{\sum_{i \leq t}\tilde\lambda_i} \rdba\leq \frac{\sigma^2\sum_{i \leq t} \psi_P(\tilde\lambda_i) + \log (2/\delta)}{\sum_{i \leq t} \tilde\lambda_i} \rp \leq \frac{\delta}{2}.
    \end{align*}

\subsection{Proof of Theorem~\ref{theorem:main_supermartingale_theorem_HS}} \label{proof:main_supermartingale_theorem_HS}

The proof is analogous to that of Theorem~\ref{theorem:main_supermartingale_theorem}, replacing $Y_t = (X_t - \widehat\mu_t)^2 - \tilde\sigma_t^2$ by $Y_t = \ldba X_t - \widehat\mu_t\rdba^2 - \tilde\sigma_t^2$.

\section{Proofs of auxiliary propositions} \label{appendix:auxiliary_propositions_proofs}

\subsection{Proof of Proposition~\ref{proposition:fourth_moment_as_convergence}} \label{proof:fourth_moment_as_convergence}

Denote $\tilde m_{4, n}^2 := \frac{1}{n} \sum_{i = 1}^n \lb (X_i - \widehat\mu_i)^2 - \widehat\sigma_i^2 \rb^2 $. Then
    \begin{align*}
        \tilde m_{4, n}^2 &= \frac{1}{n} \sum_{i = 1}^n \lb (X_i - \widehat\mu_i)^2 - \sigma^2 + \sigma^2 - \widehat\sigma_i^2 \rb^2
        \\&= \underbrace{\frac{1}{n} \sum_{i = 1}^n \lb (X_i - \widehat\mu_i)^2 - \sigma^2 \rb^2}_{(I_n)} 
        -  \underbrace{\frac{2}{n} \sum_{i = 1}^n \lb (X_i - \widehat\mu_i)^2 - \sigma^2 \rb \lp \sigma^2 - \widehat\sigma_i^2 \rp}_{(II_n)}
        \\&\quad+ \underbrace{\frac{1}{n} \sum_{i = 1}^n \lp \sigma^2 - \widehat\sigma_i^2 \rp^2}_{(III_n)}.
    \end{align*}
    It suffices to prove that $(I_n)$ converges to $\V \lb(X-\mu)^2 \rb$ almost surely, and $(II_n)$ and $(III_n)$ converge to $0$ almost surely.
    \begin{itemize}
        \item Denoting $\gamma_i = (\mu - \widehat\mu_i)^2 + 2(X_i - \mu)(\mu - \widehat\mu_i)$, it follows that 
        \begin{align*}
            (I_n) &= \frac{1}{n} \sum_{i = 1}^n \lb (X_i - \mu + \mu - \widehat\mu_i)^2 - \sigma^2 \rb^2
            \\&= \frac{1}{n} \sum_{i = 1}^n \lb (X_i - \mu)^2  - \sigma^2 + (\mu - \widehat\mu_i)^2 + 2(X_i - \mu)(\mu - \widehat\mu_i)\rb^2
            \\&= \frac{1}{n} \sum_{i = 1}^n \lb (X_i - \mu)^2  - \sigma^2 +\gamma_i\rb^2
            \\&= \frac{1}{n} \sum_{i = 1}^n \lb (X_i - \mu)^2  - \sigma^2\rb^2 + \frac{2}{n} \sum_{i = 1}^n \lb (X_i - \mu)^2  - \sigma^2\rb \gamma_i + \frac{1}{n} \sum_{i = 1}^n \gamma_i^2.
        \end{align*}
        The first of these three summands converges to $\V \lb(X-\mu)^2 \rb$ by the scalar martingale strong law of large numbers \citep[Theorem 2.1]{hall2014martingale}. 
        Given that $\lp \widehat\mu_i - \mu \rp\to0$ almost surely, $\gamma_i\to0$ almost surely as well. Thus, the latter summands converge to $0$ almost surely: the second summand converges to $0$ in view of Lemma~\ref{lemma:average_of_converging_sequence} and the fact that the $(X_i - \mu)^2  - \sigma^2$ are bounded; the third summand converges to $0$ almost surely also in view of Lemma~\ref{lemma:average_of_converging_sequence}.

        \item Given that $\lb (X_i - \widehat\mu_i)^2 - \sigma^2 \rb$ is bounded and $\lp \sigma^2 - \widehat\sigma_i^2 \rp\to0$ almost surely, $(II_n)$ converges to $0$ almost surely by Lemma~\ref{lemma:average_of_converging_sequence}.
        \item Given that  $\lp \sigma^2 - \widehat\sigma_i^2 \rp\to0$ almost surely, $\lp \sigma^2 - \widehat\sigma_i^2 \rp^2\to0$ almost surely, and so $(III_n)$ converges to $0$ almost surely by Lemma~\ref{lemma:average_of_converging_sequence}.
    \end{itemize}
    Thus,  $\tilde m_{4, n}^2 \to \V \lb(X-\mu)^2 \rb$ almost surely. Given that $\widehat m_{4, n}^2 = \frac{c_2}{n} + \frac{n-1}{n}\tilde m_{4, n-1}^2$, this also implies that $\widehat m_{4, n}^2 \to \V \lb(X-\mu)^2 \rb$ almost surely.

\subsection{Proof of Proposition~\ref{proposition:zero_fourth_moment_scales_as_one_over_upper_bound}} \label{proof:zero_fourth_moment_scales_as_one_over_upper_bound}

 Denote $v_i = (X_i - \widehat\mu_i)^2 - \widehat\sigma_i^2 $, so that
    \begin{align*}
        m_{4, t}^2 = \frac{c_2 + \sum_{i \leq t-1} v_i^2}{t}.
    \end{align*}
    
    If $\sigma^2 =  0$, then $X_i = \mu$ for all $i$. In that case, 
    \begin{align*}
        \widehat\mu_i = \frac{c_4}{i} + \frac{i-1}{i} \mu, \quad \widehat\sigma_i^2 = \frac{c_3}{i} + \frac{\lp c_4 - \mu \rp^2 \sum_{j\leq i-1}  \frac{1}{j^2}}{i}.
    \end{align*}
    Note that 
    \begin{align*}
        \widehat\sigma_i^2 \leq \frac{c_3}{i} + \frac{\lp c_4 - \mu \rp^2 \sum_{j= 1}^\infty   \frac{1}{j^2}}{i} = \frac{c_3 + \lp c_4 - \mu \rp^2 \frac{\pi^2}{6}}{i},
    \end{align*}
    and so
    \begin{align*}
        v_i^2 &= \lb \lp \frac{c_4 - \mu}{i}\rp^2 - \widehat\sigma_i^2 \rb^2
        \\&\stackrel{(i)}{\leq} 2\lp \frac{c_4 - \mu}{i}\rp^4 + 2 \widehat\sigma_i^4
        \\&\stackrel{}{\leq} 2 \frac{\lp c_4 - \mu \rp^4 }{i^2} + 2 \widehat\sigma_i^4
        \\&\stackrel{}{\leq} \frac{\kappa_1}{i^2},
    \end{align*}
    where $\kappa_1 := 2\lp c_4 - \mu \rp^4 + 2 \lp c_3 + \lp c_4 - \mu \rp^2 \frac{\pi^2}{6} \rp^2$, and (i) follows from $(a - b)^2 \leq 2a^2 + 2b^2$. Thus
    \begin{align*}
         m_{4, t}^2 \leq \frac{c_2 + \sum_{i \leq t-1} \frac{\kappa_1}{i^2}}{t} \leq \frac{c_2 + \sum_{i=1}^\infty \frac{\kappa_1}{i^2}}{t} = \frac{c_2 + \frac{\kappa_1 \pi^2}{6}}{t} = \bigO \lp \frac{1}{t}\rp.
    \end{align*} 

    If $\sigma^2 >  0$, note that
    \begin{align*}
        v_i &=  (X_i - \widehat\mu_i)^2 - \widehat\sigma_i^2
        \\&=  (X_i - \mu)^2 - \sigma^2 + 2(X_i - \mu)(\mu-\widehat\mu_i) + (\mu-\widehat\mu_i)^2 + \sigma^2 - \widehat\sigma_i^2
        \\&\stackrel{(i)}{=} 2(X_i - \mu)(\mu-\widehat\mu_i) + (\mu-\widehat\mu_i)^2 + \sigma^2 - \widehat\sigma_i^2,
    \end{align*}
    where (i) follows from $(X_i - \mu)^2 = \sigma^2$, and so
    \begin{align*}
        \lba v_i \rba  &\leq  3\lba \mu-\widehat\mu_i \rba + \lba \sigma^2 - \widehat\sigma_i^2 \rba.
    \end{align*}
     The martingale analogue of Kolmogorov's law of iterated logarithm \citep{stout1970martingale} establishes that 
    \begin{align*}
        \limsup_{i \to \infty} |\widehat\mu_i - \mu|\frac{\sqrt n}{\sqrt{2 \sigma^2 \log \log \lp n \sigma^2 \rp }} = 1
    \end{align*}
    almost surely. That implies that there exists $A \in \F$ such that $P(A) = 1$ and, for all $\omega \in A$, 
    \begin{align*}
        |\widehat\mu_i(\omega) - \mu| \leq \sqrt {C(\omega) }\frac{\sqrt{2 \sigma^2 \log \log \lp i \sigma^2 \rp }}{\sqrt i}
    \end{align*}
    for some $C(\omega) < \infty$.
    Furthermore, 
    \begin{align*}
        \widehat\sigma_i^2 &= \frac{c_3 + \sum_{j \leq i -1} \lp X_j - \bar\mu_j \rp^2}{i}
        \\&= \frac{c_3 + \sum_{j \leq i -1} \lp X_j - \mu \rp^2 + 2\lp X_j - \mu \rp\lp \mu - \bar\mu_j \rp + \lp \mu - \bar\mu_j \rp^2}{i}
        \\&= \frac{c_3 + \sum_{j \leq i -1} \lp X_j - \mu \rp^2 + 2\lp X_j - \mu \rp\lp \mu - \bar\mu_j \rp + \lp \mu - \bar\mu_j \rp^2}{i}
        \\&\stackrel{(i)}{=} \frac{c_3}{i} + \frac{i-1}{i} \sigma^2 +  \frac{ \sum_{j \leq i -1} 2\lp X_j - \mu \rp\lp \mu - \bar\mu_j \rp + \lp \mu - \bar\mu_j \rp^2}{i},
    \end{align*}
    where (i) follows from $(X_i - \mu)^2 = \sigma^2$, and so 
    \begin{align*}
        \sigma^2 - \widehat\sigma_i^2 = \frac{\sigma^2}{i} -\frac{c_3}{i}   -  \frac{ \sum_{j \leq i -1} 2\lp X_j - \mu \rp\lp \mu - \bar\mu_j \rp + \lp \mu - \bar\mu_j \rp^2}{i},
    \end{align*}
    which implies
    \begin{align*}
        \lba \sigma^2 - \widehat\sigma_i^2 \rba &\leq  \frac{c_3}{i} + \frac{\sigma^2}{i}  + \lba \frac{ \sum_{j \leq i -1} 2\lp X_j - \mu \rp\lp \mu - \bar\mu_j \rp + \lp \mu - \bar\mu_j \rp^2}{i} \rba
        \\&\leq  \frac{c_3}{i} + \frac{\sigma^2}{i}  + 3\frac{ \sum_{j \leq i -1} \lba \mu - \bar\mu_j \rba}{i}
        \\&\stackrel{}{\leq}  \frac{2}{i}  + 3\frac{ \sum_{j \leq i -1} \lba \mu - \bar\mu_j \rba}{i}.
    \end{align*}
    Thus, for $\omega \in A$,
    \begin{align*}
        \lba v_i (\omega)\rba  &\leq  3\lba \mu-\widehat\mu_i (\omega)\rba + \lba \sigma^2 - \widehat\sigma_i^2(\omega) \rba
        \\&\leq 3 \lba \mu-\widehat\mu_i(\omega) \rba + \frac{2}{i} + 3\frac{ \sum_{j \leq i -1} \lba \mu - \bar\mu_j (\omega) \rba}{i}
        \\&\leq 3\frac{\sqrt{ 2C(\omega)  \sigma^2 \log \log \lp i \sigma^2 \rp }}{\sqrt i} + \frac{2}{i} + 3\frac{ \sum_{j \leq i -1} \sqrt{C(\omega)}\frac{\sqrt{2 \sigma^2 \log \log \lp j \sigma^2 \rp }}{\sqrt j}}{i}
        \\&\leq 3\frac{\sqrt{2C(\omega) \sigma^2 \log \log \lp i \sigma^2 \rp }}{\sqrt i} + \frac{2}{i} + 3 \sqrt{2 C(\omega) \sigma^2 \log \log \lp i \sigma^2 \rp } \frac{ \sum_{j \leq i -1} \frac{1}{\sqrt j}}{i}
        \\&\stackrel{(i)}{\leq} 3\frac{\sqrt{2C(\omega) \sigma^2 \log \log \lp i \sigma^2 \rp }}{\sqrt i} + \frac{2}{\sqrt{i}} + 6 \sqrt{2 C(\omega) \sigma^2 \log \log \lp i \sigma^2 \rp } \frac{1 }{\sqrt{i}}
        \\&\stackrel{}{=} \lp 2 + 9 \sqrt{2 C(\omega) \sigma^2 \log \log \lp i \sigma^2 \rp }\rp \frac{1 }{\sqrt{i}},
    \end{align*}
    where (i) follows from Lemma~\ref{lemma:series_one_over_sqrt}.
    Thus,
    \begin{align*}
        \lp v_i (\omega)\rp^2  &\leq \lp 2 + 9 \sqrt{2 C(\omega) \sigma^2 \log \log \lp i \sigma^2 \rp }\rp^2 \frac{1 }{i}.
    \end{align*}
    From here, it follows that, for $\omega \in A$,
    \begin{align*}
        m_{4, t}^2(\omega) &= \frac{c_2 + \sum_{i \leq t-1} v_i^2(\omega)}{t} 
        \\&\leq \frac{c_2 + \sum_{i \leq t-1} \lp 2 + 9 \sqrt{2 C(\omega) \sigma^2 \log \log \lp i \sigma^2 \rp }\rp^2 \frac{1 }{i}}{t}
        \\&\leq \frac{c_2 + \lp 2 + 9 \sqrt{2 C(\omega) \sigma^2 \log \log \lp t \sigma^2 \rp }\rp^2 \sum_{i \leq t-1}  \frac{1 }{i}}{t}
        \\&\stackrel{(i)}{\leq} \frac{c_2 + \lp 2 + 9 \sqrt{2 C(\omega) \sigma^2 \log \log \lp t \sigma^2 \rp }\rp^2 \lp1 + \log t\rp}{t}
        \\&= \tilde\bigO\lp\frac{1}{t}\rp,
    \end{align*}
    where (i) follows from Lemma~\ref{lemma:series_one_over}. Noting that $P(A) = 1$ concludes the result.

\subsection{Proof of Proposition~\ref{proposition:zero_fourth_moment_scales_as_one_over_lower_bound}} \label{proof:zero_fourth_moment_scales_as_one_over_lower_bound}

The proof follows analogously to that of Proposition~\ref{proposition:zero_fourth_moment_scales_as_one_over_upper_bound} as soon as we show that
\begin{itemize}
    \item if $\sigma=0$, then 
    \begin{align*}
        (X_i - \widehat\mu_i)^2 = 0; %\bigO \lp \frac{1}{i^2} \rp;
    \end{align*}
    \item if $\sigma>0$, then 
    \begin{align*}
        |\widehat\mu_i(\omega) - \mu| = \tilde \bigO \lp \frac{1}{\sqrt i}\rp
    \end{align*}
    almost surely.
\end{itemize}
Let us now prove each of the statements. 

If $\sigma=0$, then
\begin{align*}
    \widehat\mu_t = \frac{\sum_{i=1}^{t-1} \tilde \lambda_i X_i}{\sum_{i=1}^{t-1} \tilde \lambda_i} = \frac{\sum_{i=1}^{t-1} \tilde \lambda_i \mu}{\sum_{i=1}^{t-1} \tilde \lambda_i} = \mu = X_i,
\end{align*}
and so $(X_i - \widehat\mu_i)^2 = 0$, and we are done.

If $\sigma>0$, define
\begin{align*}
    \iota_i = \sqrt{\frac{2}{\widehat\sigma_i^2 i \log(i + 1)}}.
\end{align*}
We shall start by studying the growth of $\sum_{i\leq n} \iota_i X_i$. In view of
\begin{align*}
    \sum_{i = 1}^\infty \E_{i-1}\lb \iota_i^2(X_i - \mu)^2 \rb &= \sigma^2 \sum_{i = 1}^\infty \iota_i^2 =  \sigma^2 \sum_{i = 1}^\infty \frac{2}{\widehat\sigma_i^2 i \log(i + 1)} 
    \\&\geq  2\sigma^2 \sum_{i = 1}^\infty \frac{1}{ i \log(i + 1)} \stackrel{(i)}{=} \infty,
\end{align*}
where (i) follows from Lemma~\ref{lemma:series_one_over_ilogi}, as well as $\lba \iota_i (X_i - \mu) \rba \leq \iota_i$ with $\iota_i \to 0$ almost surely, we can apply the martingale analogue of Kolmogorov's law of the iterated logarithm \citep{stout1970martingale}. Thus, defining
\begin{align*}
    S_n^2 := \sum_{i = 1}^n \E_{i-1}\lb \iota_i^2(X_i - \mu)^2 \rb  = \sigma^2 \sum_{i = 1}^n \iota_i^2,
\end{align*}
it follows that
\begin{align*}
    \limsup_{n \to \infty} \frac{\sum_{i \leq n} \iota_i (X_i - \mu)}{\sqrt{2 S_n^2 \log\log (S_n^2)}} = 1
\end{align*}
almost surely. Hence, there exists $A_1 \in \F$ with $P(A_1) = 1$ such that
\begin{align*}
     \lba \sum_{i \leq n} \iota_i(\omega) (X_i(\omega) - \mu) \rba \leq C(\omega) \sqrt{2 S_n^2 \log\log (S_n^2)}
\end{align*}
for all $\omega \in A_1$. Given that $\widehat\sigma_n \to \sigma$ almost surely, there exists $A_2 \in \F$ with $P(A_2) = 1$ such that $\widehat\sigma_n(\omega) \to \sigma$. Hence, for each $\omega \in A_2$, there exists $m(\omega)\in\Nb$ such that $\widehat\sigma_i \geq \frac{\sigma}{2}$ for all $i \geq m(\omega)$. Thus, for $\omega \in A_2$,
\begin{align*}
    S_n^2(\omega) &= \sigma^2 \sum_{i = 1}^n \frac{2}{\widehat\sigma_i^2(\omega) i \log(i + 1)}
    \\&\leq \sum_{i = 1}^n \frac{8}{ i \log(i + 1)}
    \\&\leq \sum_{i = 1}^n \frac{16}{i}
    \\&\stackrel{(i)}{\leq} 16(\log n + 1),
\end{align*}
 where (i) is obtained in view of Lemma~\ref{lemma:series_one_over}. Hence, for $\omega \in A:= A_1 \cap A_2$,
 \begin{align*}
     \lba \sum_{i \leq n} \iota_i(\omega) (X_i(\omega) - \mu) \rba \leq C(\omega) \sqrt{32(\log n + 1)\log\log (16(\log n + 1))},
\end{align*}
which is $\tilde\bigO(1)$. That is, $ \sum_{i \leq n} \iota_i (X_i - \mu) = \tilde\bigO(1)$ almost surely. Let us now show that
\begin{align} \label{eq:numerator_tilde_bigo_1}
     \lba \sum_{i \leq n} \tilde \lambda_{i, \alpha_{1, n}} (X_i - \mu) \rba = \tilde\bigO(1)
\end{align}
almost surely as well. For $i \geq m(\omega)$,
\begin{align*}
    \sqrt{\frac{2 \log (2 / \alpha_{1, n})}{\widehat\sigma_i^2 i \log(i + 1)}} \leq \frac{\sigma}{\sqrt{2}} \sqrt{\frac{\log (2 / \alpha_{1, n})}{i \log(i + 1)}}
\end{align*}
and so, for $i \geq m(\omega) \vee \sigma \sqrt{\log (2 / \alpha_{1, n})}c_5$, it holds that
\begin{align*}
    \tilde \lambda_{i, \alpha_{1, n}} = \sqrt{\log (2 / \alpha_{1, n})}\iota_i.
\end{align*}
Given that we will let $n$ tend to $\infty$, we can assume without loss of generality that $m(\omega) < \sigma \sqrt{\log (2 / \alpha_{1, n})}c_5  =: t_n$. In that case,
\begin{align*}
         \sum_{i \leq n} \tilde \lambda_{i, \alpha_{1, n}} (X_i - \mu) &= \sum_{i < t_n} \tilde \lambda_{i, \alpha_{1, n}} (X_i - \mu) + \sum_{i =t_n}^n \tilde \lambda_{i, \alpha_{1, n}} (X_i - \mu) 
         \\&= \sum_{i < t_n} \tilde \lambda_{i, \alpha_{1, n}} (X_i - \mu) + \sqrt{\log (2 / \alpha_{1, n})} \sum_{i =t_n}^n \iota_i (X_i - \mu) 
         \\&= \sum_{i < t_n} \lp \tilde \lambda_{i, \alpha_{1, n}} - \sqrt{\log (2 / \alpha_{1, n})} \iota_i \rp (X_i - \mu) 
         \\&\quad+ \sqrt{\log (2 / \alpha_{1, n})} \sum_{i \leq n} \iota_i (X_i - \mu). 
\end{align*}
Now note that the absolute value of the first summand is upper bounded by
\begin{align*}
    \sum_{i < t_n} \lba \tilde \lambda_{i, \alpha_{1, n}} - \sqrt{\log (2 / \alpha_{1, n})} \iota_i \rba &\leq \sum_{i < t_n} \lba \tilde \lambda_{i, \alpha_{1, n}} - \sqrt{\log (2 / \alpha_{1, n})} \iota_i \rba  
    \\&\leq  \sum_{i < t_n} \lp c_5 + \sqrt{\log (2 / \alpha_{1, n})} \sup_i \iota_i \rp.
\end{align*}
Given that $\iota_n \to 0$ almost surely and $c_2 > 0$, $\sup_i \iota_i$ is almost surely bounded, and thus such a first summand is upper bounded by
\begin{align*}
    t_n \lp c_5 + \sqrt{\log (2 / \alpha_{1, n})} \sup_i \iota_i \rp,
\end{align*}
which is also $\tilde\bigO\lp 1 \rp$ a.s., in view of $\log (1 / \alpha_{1, n}) = \tilde\bigO(1)$. Consequently, we have shown the validity of \eqref{eq:numerator_tilde_bigo_1}. Lastly, we observe that,
\begin{align*}
    \sum_{i \leq n}  \tilde \lambda_{i, \alpha_{1, n}} &= \sum_{i \leq n} \sqrt\frac{2\log(2/\alpha_{1, n})}{\widehat \sigma_{t}^2 i \log (1+i)} \wedge c_5
    \\&\geq \frac{1}{\sqrt {\log (1+n)}} \sum_{i \leq n} \sqrt\frac{2\log(2/\alpha)}{ i } \wedge c_5
    \\&\geq \frac{1}{\sqrt {\log (1+n)}} \lp \sqrt{2\log(2/\alpha)} \wedge c_5 \rp \sum_{i \leq n} \frac{1}{ \sqrt i }
    \\&\stackrel{(i)}{\geq} \frac{1}{\sqrt {\log (1+n)}} \lp \sqrt{2\log(2/\alpha)} \wedge c_5 \rp \lp 2\sqrt{n} - 2\rp,
\end{align*}
which is $\tilde\Omega\lp \sqrt{n}\rp$,
where (i) is obtained in view of Lemma~\ref{lemma:series_one_over_sqrt}. We thus conclude that
\begin{align*}
    \lba \widehat\mu_t - \mu \rba  = \lba \frac{\sum_{i=1}^{t-1} \tilde \lambda_i (X_i-\mu)}{\sum_{i=1}^{t-1} \tilde \lambda_i} \rba = \frac{\tilde \bigO \lp 1\rp}{\tilde \Omega(\sqrt{t})} = \tilde \bigO \lp \frac{1}{\sqrt t}\rp
\end{align*}
almost surely.

\subsection{Proof of Proposition~\ref{proposition:sum_lambda_rescaled_converges}} \label{proof:sum_lambda_rescaled_converges}

Given that $m_{4, n}^2 \to V \lb (X_i-\mu)^2\rb$ a.s. (in view of Proposition~\ref{proposition:fourth_moment_as_convergence}), there exists $A \in \F$ such that $P(A) = 1$ and %Let $\lp \Omega, \mathcal{F}, P \rp$ be the probability space on which  $(X_n)_{n\geq1}$ are defined.
    \begin{align*}
        \widehat m_{4, n}^2(\omega) \to V \lb (X_i-\mu)^2\rb 
    \end{align*}
    for all $\omega \in A$. Based on Lemma~\ref{lemma:sequence_lower_bounded_almost_surely} and $c_2 > 0$, 
    \begin{align*}
        \frac{1}{\widehat m_{4, n}^2(\omega)} \leq u(\omega) < \infty
    \end{align*}
    for all $\omega \in A$. Given that $\delta_n \to \delta > 0$ and $\delta_n >0$, then $l := \inf_{n} \delta_n > 0$, and so we observe that
    \begin{align*}
        \sqrt\frac{2\log(1/\delta_n)}{\widehat m_{4, t}^2(\omega) n} \leq \sqrt\frac{2\log(1/l)}{u(\omega) n},
    \end{align*}
    which implies the existence of $m_\omega\in \Nb$ such that 
    \begin{align*}
        \sqrt\frac{2\log(1/l)}{u(\omega) n} \leq c_1
    \end{align*}
    for all $n \geq m_\omega$. Hence
    \begin{align*}
        \lambda_{t,\delta_n}(\omega) = \sqrt\frac{2\log(1/\delta_n)}{\widehat m_{4, t}^2(\omega) n}
    \end{align*}
    for $n \geq m_\omega$. It follows that
    \begin{align*}
        \frac{1}{\sqrt{n}} \sum_{i = 1}^n \lambda_{i,\delta_n}(\omega) = \frac{1}{\sqrt{n}} \sum_{i = 1}^{m_\omega - 1} \lambda_{i,\delta_n}(\omega) + \frac{1}{\sqrt{n}} \sum_{i = m_\omega}^n \lambda_{i,\delta_n}(\omega).
    \end{align*}
    Clearly, the first term converges to $0$, and so it suffices to show that
    \begin{align*}
        \frac{1}{\sqrt{n}} \sum_{i = m_\omega}^n \lambda_{i,\delta_n}(\omega) \stackrel{a.s.}{\to} \sqrt \frac{2 \log (1 / \delta)}{\V \lb (X_i-\mu)^2\rb}.
    \end{align*}
    To see this, note that
    \begin{align*}
        \frac{1}{\sqrt{n}} \sum_{i = m_\omega}^n \lambda_{i,\delta_n}(\omega) &= \frac{1}{\sqrt{n}} \sum_{i = m_\omega}^n \sqrt\frac{2\log(1/\delta_n)}{\widehat m_{4, i}^2(\omega) n}
        \\&= \underbrace{\frac{n - m_\omega}{n}}_{(I_n)}\underbrace{ \frac{1}{n - m_\omega} \sum_{i = m_\omega}^n \sqrt\frac{2\log(1/\delta_n)}{\widehat m_{4, i}^2(\omega) }}_{(II_n(\omega))}.
    \end{align*}
    Clearly, $(I_n) \stackrel{n\to\infty}{\to} 1$. Furthermore,
    \begin{align*}
        (II_n(\omega)) \stackrel{n\to\infty}{\to} \sqrt \frac{2 \log (1 / \delta)}{\V \lb (X_i-\mu)^2\rb}
    \end{align*}
    in view of $m_{4, n}^2(\omega) \to V \lb (X_i-\mu)^2\rb$, $\delta_n \to \delta$, and Lemma~\ref{lemma:average_of_converging_sequence}. Hence
    \begin{align*}
        \frac{1}{\sqrt{n}} \sum_{i = m_\omega}^n \lambda_{i,\delta_n}^{\text{CI}}(\omega) \stackrel{n\to\infty}{\to} \sqrt \frac{2 \log (1 / \delta)}{\V \lb (X_i-\mu)^2\rb} 
    \end{align*}
    for $\omega \in A$, with $P(A) = 1$, thus concluding the proof.

\subsection{Proof of Proposition~\ref{proposition:sum_psi_e_lambda_rescaled_converges}} \label{proof:sum_psi_e_lambda_rescaled_converges}

 Analogously to the first part of the proof of Proposition~\ref{proposition:sum_lambda_rescaled_converges}, there exists $A\in\F$ such that $P(A) = 1$, %Let $\lp \Omega, \mathcal{F}, P \rp$ be the probability space on which  $(X_n)_{n\geq1}$ and $(Z_n)_{n\geq1}$ are defined. 
    \begin{align} \label{eq:mhat_omega_converges}
        \widehat m_{4, n}^2(\omega) \to V \lb (X_i-\mu)^2\rb \quad \forall \omega \in A, \quad \frac{1}{n} \sum_{i=1}^n Z_i(\omega) \stackrel{}{\to} a \quad \forall  \omega \in A,
    \end{align}
    and there exists $m_\omega$ such that
    \begin{align*}
        \lambda_{t,\delta_n}(\omega) = \sqrt\frac{2\log(1/\delta_n)}{\widehat m_{4, t}^2(\omega) n}
    \end{align*}
    for $n \geq m_\omega$ for all $\omega \in A$. Observing that
    \begin{align*}
         \sum_{i = 1}^n \psi_E \lp \lambda_{i,\delta_n}(\omega)\rp Z_i(\omega) = \underbrace{\sum_{i = 1}^{m_\omega-1} \psi_E \lp \lambda_{i,\delta_n}(\omega) \rp Z_i(\omega)}_{(I_n(\omega))} + 
         \underbrace{\sum_{i = m_\omega}^n \psi_E \lp \lambda_{i,\delta_n}(\omega) \rp Z_i(\omega)}_{(II_n(\omega))},
    \end{align*}
    Clearly, $(I_n(\omega)) \to 0$ given that it is a linear combination of terms $\psi_E \lp \lambda_{i,\delta_n}(\omega) \rp$, with
    \begin{align*}
        \lambda_{i,\delta_n}(\omega) \stackrel{n\to\infty}{\searrow} 0, \quad \psi_E(\lambda) \stackrel{\lambda\to0}{\to} 0.
    \end{align*}
    Let us now prove that
    \begin{align} \label{eq:lemma_convergence_sum_psi_step2}
        (II_n(\omega)) \to \sqrt \frac{2 \log (1 / \delta)}{\V \lb (X_i-\mu)^2\rb},
    \end{align}
    for $\omega \in A$. Denoting $\psi_N(\lambda) = \frac{\lambda^2}{2}$, as well as
    \begin{align*}
        \xi_{n, i}(\omega) := \frac{\psi_E \lp \lambda_{i,\delta_n}(\omega)  \rp}{\psi_N \lp \lambda_{i,\delta_n}(\omega)  \rp},
    \end{align*}
    it follows that
    \begin{align*}
        (II_n(\omega)) &= \sum_{i = m_\omega}^n \psi_E \lp \lambda_{i,\delta_n}(\omega) \rp Z_i(\omega)
        \\&= \sum_{i = m_\omega}^n \psi_N \lp \lambda_{i,\delta_n}(\omega) \rp \xi_{n, i}(\omega)  Z_i(\omega)
        \\&= \frac{\log(1/\delta_n)  (n - m_\omega + 1)}{n} \frac{1}{n - m_\omega + 1}\sum_{i = m_\omega}^n \frac{1}{\widehat m_{4, i}^2(\omega)} \xi_{n, i}(\omega)  Z_i(\omega).
    \end{align*}
    In view of \eqref{eq:mhat_omega_converges}, Lemma~\ref{lemma:aibiab} yields 
    \begin{align*}
        \frac{1}{n - m_\omega + 1}\sum_{i = m_\omega}^n \frac{1}{\widehat m_{4, i}^2(\omega)}   Z_i(\omega) \to \frac{a}{V \lb (X_i-\mu)^2\rb}.  
    \end{align*}
    Noting that $\frac{\psi_E(\lambda)}{\psi_N(\lambda)} \stackrel{\lambda \to 0}{\to} 1$, $\lambda_{i,\delta_i}(\omega) \geq \lambda_{i,\delta_n}(\omega)$ for $n \geq i$, and 
    \begin{align*}
        \lim_{n\to\infty}\lambda_{n,\delta_n}(\omega) &= \lim_{n\to\infty}\sqrt\frac{2\log(1/\delta_n)}{\widehat m_{4, n}^2(\omega) n} 
        \\&= \lim_{n\to\infty}\sqrt\frac{1}{ n}\lim_{n\to\infty}\sqrt\frac{2\log(1/\delta_n)}{\widehat m_{4, n}^2(\omega) } 
        \\&= 0\sqrt\frac{2\log(1/\delta)}{V \lb (X_i-\mu)^2\rb}
        \\&= 0,
    \end{align*}
    we observe that
    \begin{align*}
        \xi_{n, n}(\omega) \to 1, \quad  \xi_{i, i}(\omega) \geq \xi_{n, i} (\omega)\geq 1,
    \end{align*} 
    where the latter inequality follows from Lemma~\ref{lemma:psi_E_vs_psiN_increasing}. Invoking Lemma~\ref{lemma:anibiab} with
    \begin{align*}
        a_{n, i} = \xi_{n, i}(\omega), \quad b_i = \frac{1}{\widehat m_{4, i}^2(\omega)}   Z_i(\omega),
    \end{align*}
    it follows that
    \begin{align*}
        \frac{1}{n - m_\omega + 1}\sum_{i = m_\omega}^n \frac{1}{\widehat m_{4, i}^2(\omega)} \xi_{n, i}(\omega)  Z_i(\omega) \to \frac{a}{V \lb (X_i-\mu)^2\rb}.
    \end{align*}
    It suffices to observe that
    \begin{align*}
        \frac{\log(1/\delta_n)  (n - m_\omega + 1)}{n} \to  \log(1/\delta)
    \end{align*}
    to conclude the proof.

\subsection{Proof of Proposition~\ref{proposition:sum_lambda_rescaled_converges_zero_fourth_moment}} \label{proof:sum_lambda_rescaled_converges_zero_fourth_moment}

 In view Proposition~\ref{proposition:zero_fourth_moment_scales_as_one_over_upper_bound} or Proposition~\ref{proposition:zero_fourth_moment_scales_as_one_over_lower_bound}, there exists $A \in \F$ such that $P(A) = 1$ and % Let $\lp \Omega, \mathcal{F}, P \rp$ be the probability space on which  $(X_n)_{n\geq1}$ are defined.
    \begin{align} \label{eq:lemma17or18}
         m_{4, t}^2(\omega) = \tilde\bigO \lp \frac{1}{t}\rp
    \end{align}
    for all $\omega \in A$. For $\omega \in A$, it may be that
    \begin{align} \label{eq:tmt_less_infty}
        \limsup_{t \to \infty} t m_{4, t}^2(\omega) =: M < \infty
    \end{align}
    or 
    \begin{align} \label{eq:tmt_infty}
        \lim_{t \to \infty} t m_{4, t}^2(\omega) = \infty.
    \end{align}
    Denote $L:= \sup_{n\in\Nb} \delta_n$, as well as $\kappa:= \sqrt{\frac{2\log(1/L)}{M}} \wedge c_1$.  If \eqref{eq:tmt_less_infty} holds, then
    \begin{align*}
        \lambda_{t,\delta_n}(\omega) &= \sqrt\frac{2\log(1/\delta_n)}{\widehat m_{4, t}^2(\omega) n} \wedge c_1
        \\&= \sqrt{\frac{2\log(1/\delta_n)}{\widehat m_{4, t}^2(\omega) t} \frac{t}{n}} \wedge c_1
        \\&\geq \sqrt{\frac{2\log(1/L)}{M} \frac{t}{n}} \wedge c_1
        \\&\stackrel{(i)}{\geq} \kappa \sqrt{ \frac{t}{n}}, 
    \end{align*}
    where (i) follows from $\frac{t}{n} \leq 1$. Thus
    \begin{align*}
        \frac{1}{\sqrt{n}} \sum_{i = 1}^n \lambda_{i,\delta_n}(\omega) &\geq  \frac{\kappa}{\sqrt{n}}\sum_{i = 1}^n \sqrt{ \frac{i}{n}}
        \\&=  \frac{\kappa}{n}\sum_{i = 1}^n \sqrt{i}
        \\&\stackrel{(i)}{\geq}  \frac{2\kappa}{3n}n^{\frac{3}{2}}
        \\&\stackrel{}{=}  \frac{2\kappa}{3}n^{\frac{1}{2}},
    \end{align*}
    where (i) follows from Lemma~\ref{lemma:series_sqrt}.

    If \eqref{eq:tmt_infty} holds, then there exists $m(\omega) \in \Nb$ such that, for $t \geq m(\omega)$, 
    \begin{align*}
        \widehat m_{4, t}^2 t \geq \frac{2\log(1/l)}{c_1^2},
    \end{align*}
    where $l = \inf_{n \in \Nb} \delta_n$, which is strictly positive given that $\delta_n \to \delta > 0$ and $\delta_n >0$. Thus
    \begin{align*}
        \sqrt\frac{2\log(1/\delta_n)}{\widehat m_{4, t}^2 n} \leq \sqrt\frac{2\log(1/l)}{\widehat m_{4, t}^2 t} \leq  c_1,
    \end{align*}
    and so
    \begin{align*}
        \lambda_{i,\delta_n}(\omega) =  \sqrt\frac{2\log(1/\delta_n)}{\widehat m_{4, t}^2 n}
    \end{align*}
    for $i \geq m(\omega)$. It follows that 
    \begin{align*}
        \frac{1}{\frac{1}{\sqrt{n}} \sum_{i = 1}^n \lambda_{i,\delta_n}(\omega)} &\leq \frac{1}{\frac{1}{\sqrt{n}} \sum_{i = m(\omega)}^n \lambda_{i,\delta_n}(\omega)}
        \\&= \frac{n}{(n - m(\omega) + 1)\sqrt{2\log(1/\delta_n)}} \frac{n - m(\omega) + 1}{\sum_{i = m(\omega)}^n \frac{1}{\widehat m_{4, i}(\omega) }}
        \\&\stackrel{(i)}{\leq} \underbrace{\frac{n}{(n - m(\omega) + 1)\sqrt{2\log(1/\delta_n)}}}_{(I_n(\omega))}\underbrace{\sqrt{\frac{\sum_{i = m(\omega)}^n \widehat m_{4, i}^2(\omega) }{(n - m(\omega) + 1)}}}_{(II_n(\omega))},
    \end{align*}
    where (i) follows from the harmonic-quadratic means inequality. Now note that
    \begin{align*}
        (I_n(\omega)) \stackrel{n\to\infty}{\to} \sqrt{2\log(1/\delta)}.
    \end{align*}
    In view of \eqref{eq:lemma17or18},
    \begin{align*}
        (II_n(\omega)) = \sqrt{\frac{\sum_{i = m(\omega)}^n \tilde\bigO \lp \frac{1}{i}\rp }{(n - m(\omega) + 1)}} = \tilde\bigO \lp \frac{1}{\sqrt{n}}\rp,
    \end{align*}

    We have shown that, regardless of \eqref{eq:tmt_less_infty} or \eqref{eq:tmt_infty} holding, 
    \begin{align*}
        \frac{1}{\frac{1}{\sqrt{n}} \sum_{i = 1}^n \lambda_{i,\delta_n}(\omega)} = \tilde\bigO \lp \frac{1}{\sqrt{n}}\rp
    \end{align*}
    for all $\omega \in A$. Given that $P(A) = 1$, the proof is concluded.

\subsection{Proof of Proposition~\ref{proposition:nasty_sum_scales_logarithmically}} \label{proof:nasty_sum_scales_logarithmically}

We will conclude the proof in two steps. First, we will prove that
\begin{align*}
    (I_n) = \sup_{i\leq n}\lc \log(2/\alpha_{1,n}) + \sigma^2\sum_{k=1}^{i-1} \psi_P\lp\sqrt\frac{2\log(2/\alpha_{1,n})}{\widehat \sigma_{k}^2 k \log (1+k)} \wedge c_5\rp\rc^2
\end{align*}
scales polylogarithmically with $n$ almost surely. Second, we will show that 
\begin{align*}
    (II_n)=\sum_{i \leq n} \frac{1}{\lp \sum_{k=1}^{i-1} \sqrt\frac{2\log(2/\alpha_{1,n})}{\widehat \sigma_{k}^2 k \log (1+k)}\wedge c_5 \rp^2}
\end{align*}
also scales polylogarithmically with $n$ almost surely. Thus, by Hölder's inequality and these two steps, it will follow that 
\begin{align} \label{eq:before_holder}
    \sum_{i \leq n} \frac{\lc \log(2/\alpha_{1,n}) + \sigma^2\sum_{k=1}^{i-1} \psi_P\lp\sqrt\frac{2\log(2/\alpha_{1,n})}{\widehat \sigma_{k}^2 k \log (1+k)}\wedge c_5\rp\rc^2}{\lp \sum_{k=1}^{i-1} \sqrt\frac{2\log(2/\alpha_{1,n})}{\widehat \sigma_{k}^2 k \log (1+k)} \wedge c_5\rp^2}
\end{align}
scales logarithmically with $n$ almost surely. That is, it is $\tilde\bigO \lp 1\rp$ almost surely.

\textbf{Step 1.} If $\sigma = 0$, then $(I_n) = \log^2(2/\alpha_{1,n})$, which scales at most logarithmically with $n$ given that $1/\alpha_{1, n} = O(\log n )$, which follows from $\alpha_{1, n} = \Omega \lp \frac{1}{\log(n)} \rp$. If $\sigma > 0$, then in view of $(a+b)^2 \leq 2a^2 + 2b^2$, it follows that
\begin{align*}
    (I_n) \leq \sup_{i \leq n} \lb 2\log^2(2/\alpha_{1,n}) + 2\sigma^4\lc\sum_{k=1}^{i-1} \psi_P\lp\sqrt\frac{2\log(2/\alpha_{1,n})}{\widehat \sigma_{k}^2 k \log (1+k)}\wedge c_5\rp\rc^2\rb.
\end{align*}
We observe that 
\begin{align*}
    \psi_P\lp\sqrt\frac{2\log(2/\alpha_{1,n})}{\widehat \sigma_{k}^2 k \log (1+k)}\wedge c_5\rp &\stackrel{}{\leq} \psi_P\lp\sqrt\frac{2\log(2/\alpha_{1,n})}{\widehat \sigma_{k}^2 k \log (1+k)}\rp
    \\&\stackrel{(i)}{\leq}\lp \frac{1}{\sqrt{2k\log(1+k)}}\rp^2\psi_P\lp\sqrt\frac{4\log(2/\alpha_{1,n})}{\widehat \sigma_{k}^2 }\rp
    \\&= \frac{1}{2k\log(1+k)} \psi_P\lp\sqrt\frac{4\log(2/\alpha_{1,n})}{\widehat \sigma_{k}^2 }\rp
    \\&\stackrel{(ii)}{\leq} \frac{1}{2k\log(1+k)} \exp\lp\sqrt\frac{4\log(2/\alpha_{1,n})}{\widehat \sigma_{k}^2 }\rp
     \\&\stackrel{}{\leq} \frac{1}{k} \exp\lp\sqrt\frac{4\log(2/\alpha_{1,n})}{\widehat \sigma_{k}^2 }\rp
     \\&\stackrel{(iii)}{\leq} \frac{1}{k} \exp\lp\frac{4\log(2/\alpha_{1,n})}{\widehat \sigma_{k}^2 }\rp
     \\&\stackrel{}{=} \frac{1}{k} \lc \frac{2}{\alpha_{1,n}}\rc^{\frac{4}{\widehat \sigma_{k}^2 }},
\end{align*}
where (i) follows from Lemma~\ref{lemma:psi_p_decoupled} and $2k \log(1 +k)\geq1$ for all $k \geq 1$, (ii) follows from $\psi_P(x) = \exp(x) - x - 1 \leq \exp(x)$ for all $x \geq 0$, and (iii) follows from $\widehat\sigma_{k} \in [0,1]$ and $\sqrt x \leq x$ for all $x \geq 1$.

 Given Proposition~\ref{proposition:fourth_moment_as_convergence} and Lemma~\ref{lemma:sequence_lower_bounded_almost_surely} (in view of $c_3 > 0$), there exists $A \in \F$ such that $P(A) = 1$ and %Let $\lp \Omega, \mathcal{F}, P \rp$ be the probability space on which $(X_i)_{i \geq 1}$ is defined.
\begin{align*}
    \widehat\sigma_{k}^2(\omega) \to \sigma^2, \quad \inf_k \widehat\sigma_{k}(\omega) \geq \varkappa(\omega) > 0,
\end{align*}
for all $\omega \in A$. For $\omega \in A$ and $k \in \Nb$,
\begin{align*}
    \psi_P\lp\sqrt\frac{2\log(2/\alpha_{1,n})}{\widehat \sigma_{k}^2(\omega) k \log (1+k)}\rp &\leq \frac{1}{k} \lc \frac{2}{\alpha_{1,n}}\rc^{\frac{4}{ \varkappa^2(\omega) }}, 
\end{align*}
and so 
\begin{align*}
    \sup_{i \leq n}\sum_{k=1}^{i-1} \psi_P\lp\sqrt\frac{2\log(2/\alpha_{1,n})}{\widehat \sigma_{k}^2(\omega) k \log (1+k)}\rp &\leq \lc \frac{2}{\alpha_{1,n}}\rc^{\frac{4}{ \varkappa^2(\omega) }} \sum_{k=1}^{i-1} \frac{1}{k} 
    \\&\stackrel{(i)}{\leq}\lc \frac{2}{\alpha_{1,n}}\rc^{\frac{4}{ \varkappa^2(\omega) }} \lp \log i + 1 \rp
     \\&\stackrel{}{\leq}\lc \frac{2}{\alpha_{1,n}}\rc^{\frac{4}{ \varkappa^2(\omega) }} \lp \log n + 1 \rp,
\end{align*}
where (i) is obtained in view of Lemma~\ref{lemma:series_one_over}. Thus
\begin{align*}
    \lp I_n(\omega) \rp \leq 2\log^2(2/\alpha_{1,n}) + \lc \frac{2}{\alpha_{1,n}}\rc^{\frac{4}{ \varkappa^2(\omega) }} \lp \log n + 1 \rp,
\end{align*}
which scales polinomially with $\log n$ in view of $1/\alpha_{1, n} = O(\log n )$.

\textbf{Step 2.} Denoting $\kappa = \sqrt{4 \log (2 / \alpha)} \wedge c_5$, it follows that
\begin{align*}
     \sum_{k=1}^{i-1} \sqrt\frac{2\log(2/\alpha_{1,n})}{\widehat \sigma_{k}^2 k \log (1+k)} \wedge c_5 &\geq \sum_{k=1}^{i-1} \sqrt\frac{2\log(2/\alpha)}{k \log (1+k)} \wedge c_5
     \\&\stackrel{(i)}{\geq}\kappa\sum_{k=1}^{i-1} \sqrt\frac{1}{2k \log (1+k)}
     \\&= \frac{\kappa}{\sqrt{2}}\sum_{k=1}^{i-1} \sqrt\frac{1}{k \log (1+k)}
     \\&\geq \frac{\kappa}{\sqrt{2 \log (i)}}\sum_{k=1}^{i-1} \sqrt\frac{1}{k }
     \\&\stackrel{(ii)}{\geq} \frac{2\kappa}{\sqrt{2 \log (i)}} \lb (\sqrt{i-1} - 1) \vee 1 \rb,
\end{align*}
where (i) follows from $2k \log (1+k) \geq 1$ for $k \geq 1$, and (ii) is obtained in view of Lemma~\ref{lemma:series_one_over_sqrt}. It follows that 
\begin{align*}
     (II_n) &\leq \sum_{2\leq i \leq n} \frac{\frac{2 \log (i)}{\kappa^2}}{\lb (\sqrt{i-1} - 1) \vee 1 \rb^2}  
     \\&\leq \frac{2 \log (n)}{\kappa^2} \sum_{2\leq i \leq n} \frac{1}{\lb (\sqrt{i-1} - 1) \vee 1 \rb^2}  
     \\&= \frac{2 \log (n)}{\kappa^2} \lp 2+ \sum_{4\leq i \leq n} \frac{1}{(\sqrt{i-1} - 1)^2}  \rp  
     \\&\stackrel{(i)}{\leq} \frac{2 \log (n)}{\kappa^2} \lp 2+ \sum_{4\leq i \leq n} \frac{9}{i}  \rp  
     \\&\stackrel{}{\leq} \frac{2 \log (n)}{\kappa^2} \lp 2+ \sum_{2\leq i \leq n} \frac{9}{i}  \rp 
    \\&\stackrel{(ii)}{\leq} \frac{2 \log (n)}{\kappa^2} \lp 2+ 9 \log n  \rp,
\end{align*}
where (i) follows from $\sqrt{i-1} - 1 \geq \frac{\sqrt i}{3}$ for all $i \geq 4$, and (ii) follows from Lemma~\ref{lemma:series_one_over}. Thus, $(II_n)$ also scales polylogarithmically with $n$.

\section{Alternative approaches to the proposed empirical Bernstein inequality} \label{section:alternative_approaches}

We present in this appendix two alternative approaches to that proposed in Section~\ref{section:main_results}.

\subsection{Decoupling the inequality into first and second moment inequalities} \label{section:naive_approach_two_eb}

We start by presenting a naive approach to the problem using two empirical Bernstein inequalities, which may be the most natural starting point. However, this approach will prove suboptimal, both theoretically and empirically.

\subsubsection{Confidence sequences obtained using two empirical Bernstein inequalities} 
We start by noting that
\begin{align*}
    \sigma^2 = \E X_i^2 - \E^2X_i,
\end{align*}
Thus, in order to give an upper confidence sequence for $\sigma^2$, it suffices to derive an upper confidence sequence for $\E X_i^2$ and a lower confidence sequence for $\E X_i$. Consider
\begin{align*}
    U_{1, \alpha_1, t} :=  \frac{\sum_{i \leq t}\lambda_i (X_i^2 - \widehat {m_2}_i)^2}{\sum_{i \leq t} \lambda_i} + \frac{\log(1/\alpha_1) + \sum_{i \leq t}\psi_E(\lambda_i) \lp X_i ^2 - \widehat {m_2}_i \rp^2}{\sum_{i \leq t} \lambda_i} 
\end{align*}
as the upper confidence sequence for $\E X_i^2$ (which follows from empirical Bernstein inequality), and 
\begin{align*}
    L_{2, \alpha_2, t} :=   \frac{\sum_{i \leq t} \tilde \lambda_i X_i}{\sum_{i \leq t} \tilde \lambda_i} - \frac{\log(1/\alpha_1) + \sum_{i \leq t}\psi_E(\tilde\lambda_i) \lp X_i - \widehat \mu_i \rp^2}{\sum_{i \leq t} \tilde \lambda_i} 
\end{align*}
as the lower confidence sequence for $\E X_i$ (which follows from empirical Bernstein), so that $\alpha_1 + \alpha_2 = \alpha$. Now we take
\begin{align}
    \sigma^2 \leq U_{1, \alpha_1, t} - L^2_{2, \alpha_2, t}
\end{align}
as the upper confidence sequence for $\sigma^2$.

Similarly, in order to derive lower inequalities, define
\begin{align*}
    L_{1, \alpha_1, t} :=  \frac{\sum_{i \leq t}\lambda_i (X_i^2 - \widehat {m_2}_i)^2}{\sum_{i \leq t} \lambda_i} - \frac{\log(1/\alpha_1) + \sum_{i \leq t}\psi_E(\lambda_i) \lp X_i ^2 - \widehat {m_2}_i \rp^2}{\sum_{i \leq t} \lambda_i} 
\end{align*}
as the lower confidence sequence for $\E X_i^2$ (which follows from empirical Bernstein inequality), and 
\begin{align*}
    L_{2, \alpha_2, t} :=   \frac{\sum_{i \leq t} \tilde \lambda_i X_i}{\sum_{i \leq t} \tilde \lambda_i} + \frac{\log(1/\alpha_1) + \sum_{i \leq t}\psi_E(\tilde\lambda_i) \lp X_i - \widehat \mu_i \rp^2}{\sum_{i \leq t} \tilde \lambda_i} 
\end{align*}
as the upper confidence sequence for $\E X_i$ (which follows from empirical Bernstein), so that $\alpha_1 + \alpha_2 = \alpha$. Now we take
\begin{align}
    \sigma^2 \geq L_{1, \alpha_1, t} - U^2_{2, \alpha_2, t}
\end{align}
as the lower confidence sequence for $\sigma^2$.

\subsubsection{Theoretical and empirical suboptimality of the approach}

Ideally, we would expect the width of the confidence interval for $\sigma^2$ to scale as $ \sqrt{2\V (X-\mu)^2\log(1/\alpha)/t}$ (i.e., first order term in Bennett's inequality). However, we see that the term in $U_{1, \alpha_1, t}$
\begin{align*}
     \frac{\log(1/\alpha_1) + \sum_{i \leq t}\psi_E(\lambda_i) \lp X_i ^2 - \widehat {m_2}_i \rp^2}{\sum_{i \leq t} \lambda_i}
\end{align*}
scales as $ \sqrt{2\V X^2\log(1/\alpha)/t}$. It suffices to observe that
\begin{align*}
    \V (X-\mu)^2 &= \E (X-\mu)^4 - \E^2 (X-\mu)^2
    \\&= \E \lb X^4 - 4X^3\mu + 6X^2\mu^2 - 4X\mu^3 + \mu^4 \rb - \lb \E^2X^2 + \mu^4 - 2 \mu^2\E X^2  \rb
    \\&=\lp  \E X^4 - \E^2 X^2 \rp + \E \lb - 4X^3\mu + 8X^2\mu^2 - 4X\mu^3  \rb
    \\&= \lp  \E X^4 - \E^2 X^2 \rp - 4\mu \E \lp X^{\frac{3}{2}} - X^{\frac{1}{2}}\mu \rp^2
    \\&= \V X^2 - 4\mu \E \lp X^{\frac{3}{2}} - X^{\frac{1}{2}}\mu \rp^2
    \\&\leq \V X^2,
\end{align*}
where the last inequality follows from $\mu \in (0,1)$, to conclude that the first order term of this confidence interval will generally dominate that of Bennett's inequality.

We also clearly see the suboptimality of the approach empirically.  Figure~\ref{fig:eb_vs_decoupled} exhibits the upper and lower inequalities proposed in Section~\ref{section:main_results} to those derived in this appendix for all the scenarios considered in Section~\ref{section:experiments}, illustrating the poor performance of the latter.

\begin{figure}[ht] 
    \center \includegraphics[width=\textwidth]{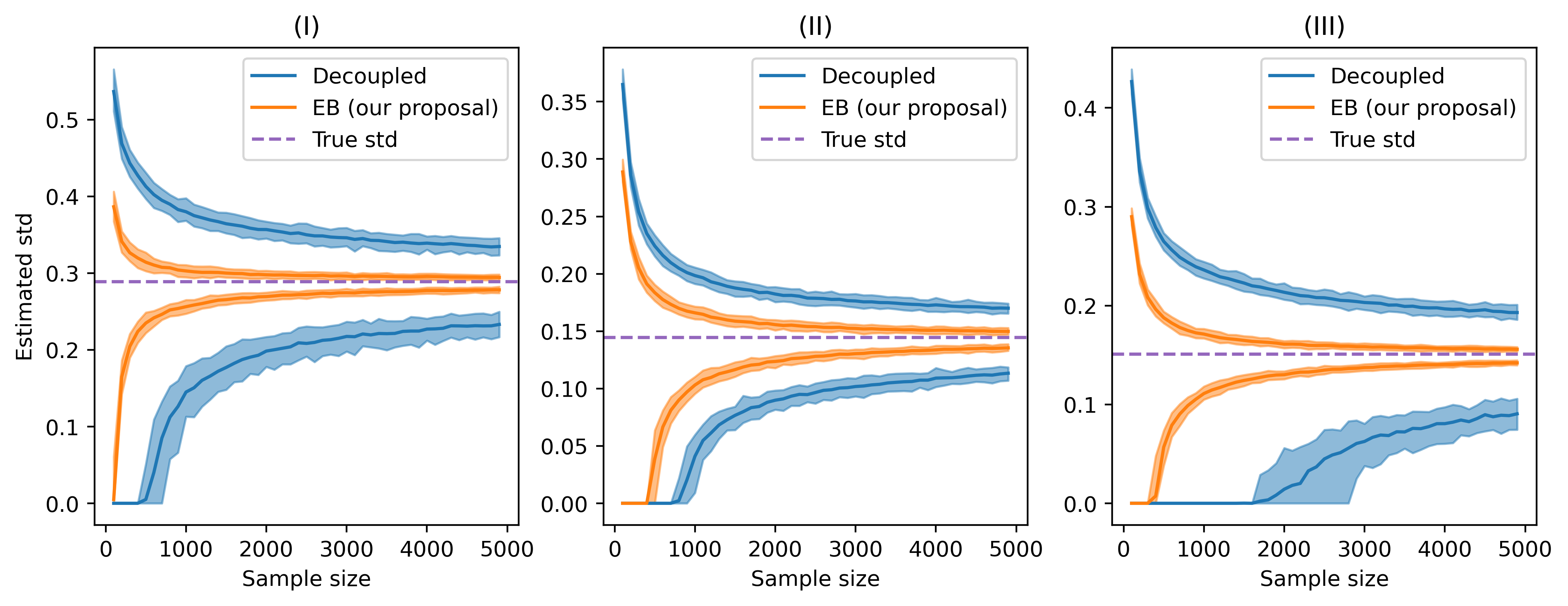}
  \caption{Average confidence intervals over $100$ simulations for the std $\sigma$ for (I) the uniform distribution in $(0,1)$, (II) the beta distribution with parameters $(2, 6)$, and (III) the beta distribution with parameters $(5, 5)$. For each of the inequalities, the $0.95\%$-empirical quantiles are also displayed. The decoupling approach (this appendix) is compared against EB (our proposal). EB clearly outperforms the decoupled approach in all the scenarios.} 
  \label{fig:eb_vs_decoupled}
\end{figure}

\subsection{Upper bounding the error term instead of taking negligible plug-ins}

In Section~\ref{section:lower_bound}, we proposed to take $\lambda_{t, l ,\alpha_2} = 0$ if
\begin{align*}
    \frac{\log(2/\alpha_1) + \hat\sigma^2_{t} \sum_{i=1}^{t-1} \psi_P(\tilde \lambda_i)}{\sum_{i=1}^{t-1} \tilde \lambda_i} \leq 1.
\end{align*}
A reasonable alternative would be to avoid defining $\lambda_{t, l ,\alpha_2}$ as $0$ (i.e., always define $\lambda_{t, l ,\alpha_2} := \lambda_{t, u ,\alpha_2}$), and to take
\begin{align*}
    \tilde R_{t, \delta} = \begin{cases}
        \frac{\log(2/\delta) + \sigma^2\sum_{i=1}^{t-1} \psi_P(\tilde \lambda_i)}{\sum_{i=1}^{t-1} \tilde \lambda_i}, \quad &\text{if} \quad \frac{\log(2/\delta) + \hat\sigma^2_{t-1} \sum_{i=1}^{t-1} \psi_P(\tilde \lambda_i)}{\sum_{i=1}^{t-1} \tilde \lambda_i} \leq 1, \\
        1, \quad &\text{otherwise}.
    \end{cases}
\end{align*}
In order to formalize this, denote
\begin{align*}
    \Upsilon_t := \lc i \in [t] : \frac{\log(2/\delta) + \hat\sigma^2_{t-1}\sum_{i=1}^{t-1} \psi_P(\tilde \lambda_i)}{\sum_{i=1}^{t-1} \tilde \lambda_i} \leq 1 \rc, \quad  \Upsilon_t^c := [t] \backslash \Upsilon_t.
\end{align*}
Taking
\begin{align*}
    A_{t} &:=  \frac{\sum_{i \in \Upsilon_t}\lambda_i \tilde A_{i}}{\sum_{i \leq t} \lambda_i}, 
    \quad B_{t, \delta} := 1 +  \frac{\sum_{i \in \Upsilon_t}\lambda_i \tilde B_{i, \delta}}{\sum_{i \leq t} \lambda_i},
    \\ C_{t, \delta} &:= \frac{\sum_{i \in \Upsilon_t}\lambda_i \tilde C_{i, \delta} + \sum_{i \in \Upsilon_t^c}\lambda_i}{\sum_{i \leq t} \lambda_i},
\end{align*}
in Section~\ref{section:lower_bound}, Corollary~\ref{corollary:lower_bound} also holds. Figure~\ref{fig:eb_vs_ebalternative} exhibits the empirical performance of this choice of plug-ins and that of Section~\ref{section:lower_bound}, in the three scenarios from Section~\ref{section:experiments}. The figure shows the slight advantage of considering the plug-ins from Section~\ref{section:lower_bound}.

\begin{figure}[ht] 
    \center \includegraphics[width=\textwidth]{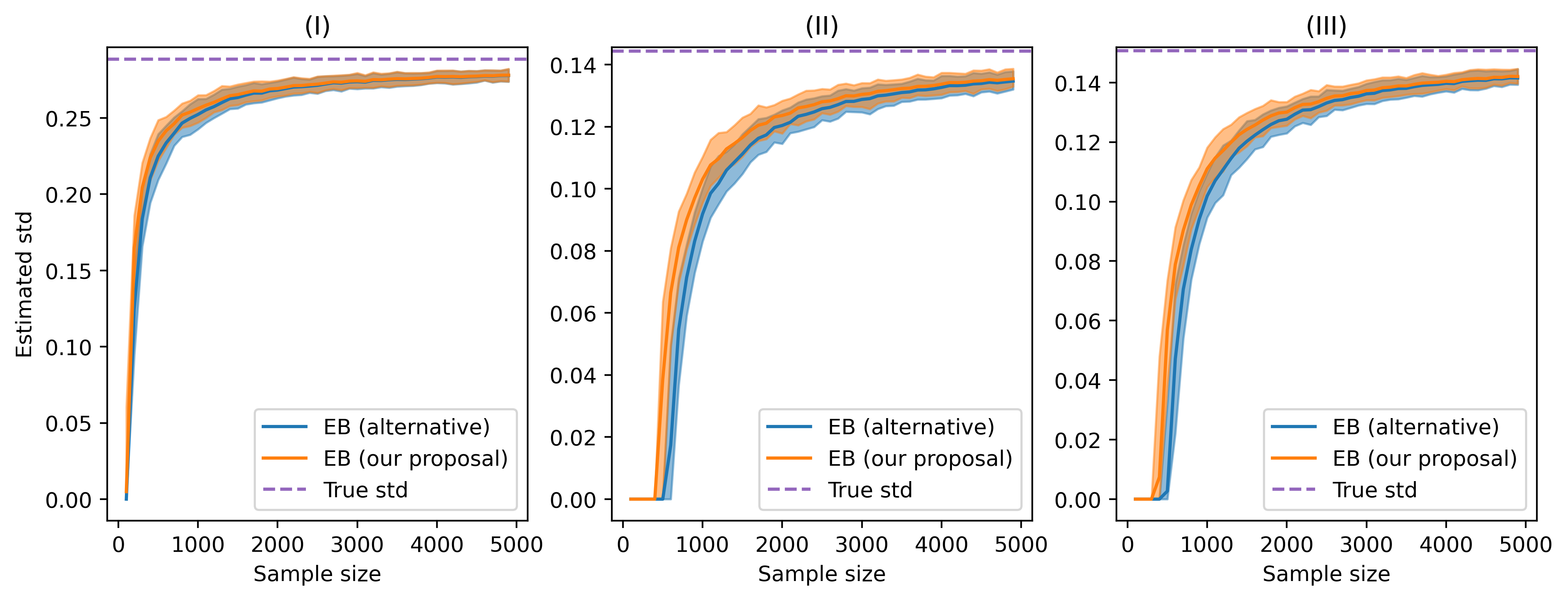}
  \caption{Average confidence intervals over $100$ simulations for the std $\sigma$ for (I) the uniform distribution in $(0,1)$, (II) the beta distribution with parameters $(2, 6)$, and (III) the beta distribution with parameters $(5, 5)$. For each of the inequalities, the $0.95\%$-empirical quantiles are also displayed. The EB lower confidence intervals with the plug-ins from Section~\ref{section:lower_bound} (our proposal) are compared against the EB lower confidence intervals with the plug-ins proposed in this appendix (alternative). Despite the expected similar outcomes, the plug-ins from Section~\ref{section:lower_bound} lead to slightly sharper bounds.} 
  \label{fig:eb_vs_ebalternative}
\end{figure}

\subsection{Known mean}

The main results of this contribution are derived from Corollary~\ref{corollary:main_corollary}. However, Corollary~\ref{corollary:main_corollary} is not readily applicable because $E_t$ is unknown, given that $\mu$ is also unknown in practice. We explore here the power of Corollary~\ref{corollary:main_corollary} in comparison to our final confidence intervals, i.e., how much it is lost after dealing with the unknown term $E_t$. We explore this just as a theoretical exercise, given that $\mu$ is generally unknown. Figure~\ref{fig:known_mu} displays the upper and lower confidence intervals for different sample sizes and distributions. The upper confidence bounds remain essentially unchanged, with the orange and blue regions being nearly indistinguishable. In contrast, the lower bounds exhibit a clear gap, which is consistent with the greater difficulty of deriving lower bounds and the requirement of applying an additional concentration inequality to the empirical mean estimator.

\begin{figure}[!ht] 
    \center \includegraphics[width=\textwidth]{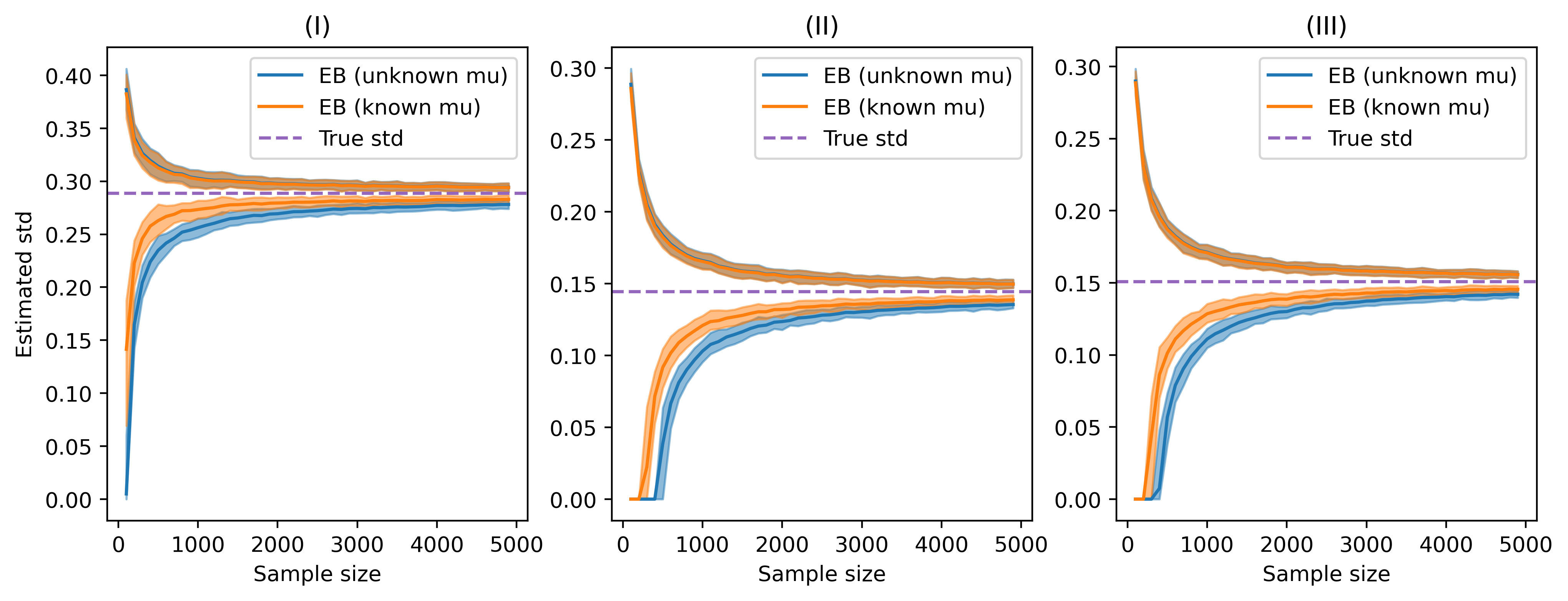}
  \caption{Average confidence intervals over $100$ simulations for the std $\sigma$ for (I) the uniform distribution in $(0,1)$, (II) the beta distribution with parameters $(2, 6)$, and (III) the beta distribution with parameters $(5, 5)$. The  confidence intervals proposed in Section~\ref{section:main_results} (EB, unknown mu) are displayed alongside the confidence intervals that could be obtained from Corollary~\ref{corollary:main_corollary} if the mean $\mu$ was known (EB, known mu).} 
  \label{fig:known_mu}
\end{figure}

\end{document}